\DeclareMathOperator{\Aut}{Aut}
\DeclareMathOperator{\Isom}{Isom}
\DeclareMathOperator{\Closed}{Closed}
\DeclareMathOperator{\Id}{Id}
\DeclareMathOperator{\graph}{graph}
\newcommand{\N}{\mathbb{N}}
\newcommand{\Z}{\mathbb{Z}}
\newcommand{\R}{\mathbb{R}}
\newcommand{\aeq}{\simeq}
\newcommand{\asm}{\preceq}
\newcommand{\avsm}{\ll}
\newcommand{\diam}{\mbox{diam}}
\newtheorem{theorem}{Theorem}[section]
\newtheorem{alphtheorem}{Theorem}
\newtheorem{lemma}[theorem]{Lemma}
\newtheorem{prop}[theorem]{Proposition}
\newtheorem{corollary}[theorem]{Corollary}
\newtheorem*{theoremBowditch}{Theorem~\ref{theorem:Bowditch}}
\newtheorem*{theoremBowditch+}{Theorem~\ref{Theorem:Bowditch+}}
\newtheorem*{theoremsharply3}{Theorem~\ref{theorem:sharply3intro}}
\newtheorem*{theorem_finite_disconnecting}{Theorem~\ref{theorem:finitedisconnecting}}
\theoremstyle{remark}
\newtheorem{step}{Step}
\newtheorem{question}[theorem]{Question}
\newtheorem{remark}[theorem]{Remark}
\newtheorem{notation}[theorem]{Notation}
\newtheorem{defn}[theorem]{Definition}
\DeclareMathOperator{\PGL}{PGL}
\DeclareMathOperator{\Projline}{P^1}
\newcommand{\actson}{\curvearrowright}
\newcommand{\NN}{\mathbf N}
\newcommand{\ZZ}{\mathbf Z}
\newcommand{\RR}{\mathbf R}
\newcommand{\CC}{\mathbf C}
\newcommand{\FF}{\mathbf F}
\newcommand{\QQ}{\mathbf Q}
\newcommand{\HH}{\mathbf H}
\theoremstyle{remark}
\newtheorem*{claim*}{Claim}
\newtheorem*{question*}{Question}
\newtheorem{example}[theorem]{Example}
\newenvironment{alphtheoremenum}
	{	
		
		\begin{enumerate}}
	{\end{enumerate}}
\title{Locally compact convergence groups and $n$-transitive actions}
\author{Mathieu Carette}
\address{\tt Universit\'e catholique de Louvain, 
IRMP,
Chemin du Cyclotron 2, bte L7.01.01, 
1348 Louvain-la-Neuve,
Belgium}
\email{\tt mathieu.carette@uclouvain.be}
\author{Dennis Dreesen}
\address{\tt School of Mathematics,
University of Southampton;
Highfield, Southampton,
SO17 1BJ, United Kingdom }
\email{\tt dennis.dreesen@soton.ac.uk}
\begin{document}
	
	%\maketitle \marginpar{\mathieu{should the title contain "sharply"?}}

	%Global notations / remarks :
	%\input{globalremarksDennis}
	%\input{globalremarksMathieu}
	\thanks{The first author is a Postdoctoral Researcher of the F.R.S.-FNRS (Belgium).}
	\thanks{The major part of this work was conducted while the second author was staying at the universit\'e cathologique de Louvain in Louvain-la-Neuve.  He gratefully acknowledges the support of the F.R.S.-FNRS (Belgium), grant F.4520.11. Currently, the second author is a Marie Curie Intra-European Fellow within the 7th European Community Framework Programme.}
	
\begin{abstract}
All $\sigma$-compact, locally compact groups acting sharply $n$-transitively and continuously on compact spaces $M$ have been classified, except for $n=2,3$ when $M$ is infinite and disconnected. We show that no such actions exist for $n=2$ and that these actions for $n=3$ coincide with the action of a hyperbolic group on a space equivariantly homeomorphic to its hyperbolic boundary. 
%We show in the remaining cases that the group acts properly and cocompactly on a locally finite tree such that the action on the boundary is sharply $3$-transitive.
%acts properly and continuously by automorphisms on a locally finite tree $T$ whose boundary is $G$-equivariantly homeomorphic to $M$.
We further give a characterization of non-compact groups acting $3$-properly and transitively on infinite compact sets as non-elementary boundary transitive hyperbolic groups. The main tool is a generalization to locally compact groups of Bowditch's topological characterization of hyperbolic groups. Finally, in contrast to the case $n=3$, we show that for $n \geq 4$ if a locally compact group acts continuously, $n$-properly and $n$-cocompactly on a locally connected metrizable compactum $M$, then $M$ has a local cut point. %More precisely, there is a finite set $P$ such that $M\backslash P$ is not connected.
\end{abstract}

\maketitle

\setcounter{tocdepth}{1} %hide subsections from TOC
\tableofcontents

\section{Introduction}

In $1998$ \cite{Bowditch}, Bowditch characterized (discrete) hyperbolic groups via their actions on their boundary. It was already known that a non-elementary hyperbolic group $\Gamma$ acts $3$-properly and $3$-cocompactly on its boundary $\partial \Gamma$ which is a compact, perfect, metrizable space. Bowditch showed that conversely, if a discrete group $\Gamma$ acts $3$-properly and $3$-cocompactly on a perfect, compact, metrizable space $M$, then $\Gamma$ is necessarily non-elementary hyperbolic and $M$ is $\Gamma$-equivariantly homeomorphic to $\partial \Gamma$. 

The definition of a $3$-proper, $3$-cocompact action makes sense equally well for locally compact groups as it does for discrete groups (see Section~\ref{sc:Bowditch}). Similarly, the notion of word-hyperbolic group extends beyond the world of discrete groups. Indeed Gromov's work \cite{Gromov} contains ideas which already encompass locally compact groups. Following \cite{Caprace} we define a locally compact group $G$ to be {\bf hyperbolic} if it is compactly generated and word-hyperbolic with respect to the word length metric associated to some compact generating subset. This is in fact equivalent to the group acting continuously, properly, cocompactly and isometrically on a proper hyperbolic geodesic metric space $X$ (see Corollary 2.6 of \cite{Caprace}). The space $X$ is determined up to quasi-isometry and so one can unambiguously define the hyperbolic boundary $\partial G$ of $G$ as the hyperbolic boundary of $X$.

The above leads to the natural question as to whether or not Bowditch's topological characterization generalizes to the class of {\em locally compact} hyperbolic groups. This is not a priori so: locally compact hyperbolic groups exhibit phenomena which do not appear in the discrete context. Most importantly, it is possible for such groups to act transitively on their boundary. On the other hand, discrete hyperbolic groups are countable, and so cannot act transitively on any compact perfect, hence uncountable, space. More warnings, similarities and differences are highlighted following Theorem~\ref{theorem:tree_groups}, as well as in the beginning of Sections~\ref{sc:Bowditch} and~\ref{sc:transitiveconvergence}.

Apart from generalizing Bowditch's characterization to the setting of locally compact groups (see Theorem \ref{theorem:Bowditch} below), we also generalize in Theorem~\ref{Theorem:Bowditch+} a companion characterization of hyperbolicity which makes use of conical limit points (see Section~\ref{sc:Bowditch+} for the definition of conical limit points). Theorem \ref{Theorem:Bowditch+} in the case of discrete groups was proven by Bowditch and independently also by Tukia \cite{Tukia}.

% Similarly, we generalize another criterion of hyperbolicity, obtained independently by Bowditch \cite{Bowditch} and Tukia \cite{Tukia}, in terms of conical limit points (see Section~\ref{sc:Bowditch+} for the definition of conical limit points). We obtain:
\begin{alphtheorem} \label{theorem:BowBowditch+}
 Let $G$ be a locally compact group acting continuously and $3$-properly on a perfect compactum $M$. 
	\begin{alphtheoremenum} 
		\item Suppose that $G$ acts $3$-cocompactly. Then $G$ is hyperbolic, and there is a $G$-equivariant homeomorphism $\partial G \to M$. \label{theorem:Bowditch}
		\item Suppose that $M$ is metrizable. Then $G$ acts $3$-cocompactly if and only if every $x \in M$ is a conical limit point. \label{Theorem:Bowditch+}
	\end{alphtheoremenum}
\end{alphtheorem}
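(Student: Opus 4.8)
The plan is to prove the two assertions separately: statement~(\ref{theorem:Bowditch}) by adapting Bowditch's construction of a hyperbolic model space to the locally compact category, and statement~(\ref{Theorem:Bowditch+}) by a compactness argument on the space of distinct triples. First I would record the translation of the hypotheses into dynamical language: $3$-properness means precisely that $G$ acts properly on the space $\Theta^{3}(M)$ of distinct triples of $M$, and $3$-cocompactness means that this action is cocompact. Thus for~(\ref{theorem:Bowditch}) the hypothesis is that $G$ acts properly and cocompactly on the locally compact space $\Theta^{3}(M)$. The goal is then to manufacture a proper geodesic Gromov-hyperbolic space $X$ on which $G$ acts properly, cocompactly and isometrically, together with a $G$-equivariant homeomorphism $\partial X \to M$. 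Once this is achieved, Corollary~2.6 of \cite{Caprace} immediately yields that $G$ is hyperbolic with $\partial G \cong M$, which is exactly~(\ref{theorem:Bowditch}).

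To build $X$, I would follow Bowditch's strategy and realize $X$ as a connected graph, or metric complex, whose coarse geometry is governed by the dynamics on triples: its vertices are modeled on a $G$-cocompact, locally compact net in $\Theta^{3}(M)$, and one joins two configurations when they are dynamically close. The decisive input is that cocompactness on triples supplies \emph{uniform} constants, since a compact fundamental domain controls every configuration up to translation; this uniformity is exactly what is required to verify Bowditch's connectedness and thin-interval axioms, and hence Gromov hyperbolicity of $X$ (for instance through a guessing-geodesics criterion). The main obstacle here, and the genuinely new feature beyond the discrete case, is that stabilizers are now compact rather than finite while point stabilizers in $M$ may be non-compact (the action on $M$ can even be transitive). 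I must therefore arrange the construction to be $G$-equivariant and to produce a \emph{proper} space, locally compact and complete, rather than a locally finite graph, so that a Milnor--Schwarz type argument and Caprace's criterion still apply. Properness of the $G$-action on $\Theta^{3}(M)$ is what keeps this under control: the quotient stays Hausdorff and the uniform bounds survive, so the passage from the discrete setting is genuinely a matter of bookkeeping rather than new phenomena.

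For statement~(\ref{Theorem:Bowditch+}) the forward implication is the easy one: assuming $3$-cocompactness, and given $x \in M$, I fix any two further distinct points $a,b$ and apply cocompactness on triples to the degenerating triples approaching $(a,b,x)$, producing group elements $g_{n}$ that pull them back into a fixed compact set; unwinding this translation exhibits $x$ as a conical limit point. The substantial direction is the converse, the locally compact analogue of the Bowditch--Tukia theorem, and this is where I expect the real difficulty to lie. I would argue by contradiction: if the action on $\Theta^{3}(M)$ failed to be cocompact, there would be a sequence of distinct triples escaping every compact set modulo $G$. Exploiting metrizability of $M$ together with properness, I would normalize by suitable group elements and pass to convergence-group limits, extracting a point $x \in M$ whose local dynamics are incompatible with being a conical limit point---the escaping triples force $x$ to behave like a bounded parabolic, hence non-conical, point. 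The heart of the matter is this delicate diagonal extraction, now carried out with nets of group elements in the locally compact category, where one must simultaneously control the north--south dynamics of the convergence action and the compact stabilizers; this Tukia-style limiting argument is the principal obstacle of the whole statement.
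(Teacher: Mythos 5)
Your outline follows the right general strategy for part (A$_1$) --- adapt Bowditch's machinery --- but it never engages with what that machinery actually is, and in doing so it skips the one step that is genuinely new in the locally compact setting. The paper (following \cite{Bowditch}) does not directly build a proper geodesic space and invoke Corollary~2.6 of \cite{Caprace}; it constructs a $G$-invariant symmetric \emph{annulus system} on $M$ from a compact fundamental set $\Theta_0\subset M^{(3)}$, verifies axioms (A1)--(A3), obtains a compatible hyperbolic path crossratio, and hence a $G$-invariant hyperbolic path \emph{quasimetric} on $M^{(3)}$ whose boundary is $M$; hyperbolicity of $G$ then comes from an ad hoc Schwartz--Milnor lemma for quasimetric spaces. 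The point you dismiss as ``bookkeeping'' is precisely where the argument changes: in the discrete case the annulus system has finitely many annuli below any scale, and finiteness immediately bounds the length of nested chains; in the locally compact case one only has \emph{bounded} families (contained in $K\mathcal A^*$ for $K\subset G$ compact), and bounding nested chains inside a bounded family requires a new argument with identity neighbourhoods $V_A$ chosen so that $\gamma A\not<\gamma' A$ whenever $\gamma,\gamma'$ lie in the same coset of $V_A$ (Lemma~\ref{lem:unbddannuli}). Without this, axioms (A1) and (A2) cannot be verified and the construction does not get off the ground. (One must also work with nets, since $M$ is not assumed metrizable in (A$_1$); metrizability is a \emph{conclusion} of the construction.)

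For the hard direction of (A$_2$) your proposed route has a concrete flaw: the dichotomy ``the escaping triples force $x$ to behave like a bounded parabolic, hence non-conical, point'' is false for non-discrete groups. The paper explicitly warns that a conical limit point can simultaneously be a bounded parabolic point (e.g.\ $\Isom(\HH^2)$ acting on $S^1$), so a Tukia-style dynamical contradiction of this shape does not close. What the paper actually does is first build, \emph{without assuming cocompactness}, a second annulus system satisfying (A1), (A2) and the new axiom (A4) ($(x\mid y)>0$ for $x\neq y$), by an inductive multi-scale construction that uses the metric on $M$ and $3$-properness (Proposition~\ref{proposition:constructannulisystem} and Lemma~\ref{lemma:smalldiameter}); conicality of every point then upgrades (A4) to (A3) (Lemma~\ref{lm:seperateconicalcompact}), so one again gets a hyperbolic quasimetric $\rho$ on $M^{(3)}$ with boundary $M$. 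Only then does the contradiction run: non-cocompactness gives triples $u_i$ with $\rho(u_i,G\theta)\to\infty$, these $\rho$-converge to some $x\in M$, and conicality of $x$ (via Tukia's characterization, Lemma~\ref{lemma:conicallimiteqdef}) produces triples $(y,x,x_i)\in G\Theta_0$ lying at bounded distance from a geodesic ray to $x$, contradicting $\rho$-boundedness of $G\Theta_0$. Your proposal contains neither the cocompactness-free annulus construction nor the geometric reformulation that makes the contradiction work, so as it stands the converse direction is not proved.
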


Similar to the discrete case, a locally compact group $G$ acting continuously and $3$-properly on a compact space will be called a {\bf convergence group}. This terminology refers to an equivalent dynamical formulation of $3$-properness (see Section~\ref{sec:dynamicconvergenceproperty}). If moreover $G$ acts $3$-cocompactly, then we say that $G$ is a {\bf uniform} convergence group. Theorem~\ref{theorem:Bowditch} thus characterizes locally compact hyperbolic groups as uniform convergence groups acting on perfect compact spaces.

Theorem~\ref{theorem:BowBowditch+} will be applied in two different settings. First, we apply it to locally compact groups acting sharply $3$-transitively and continuously on compact spaces as a step towards a classification of all such actions (see Theorem \ref{theorem:sharply3intro}). Next, we consider the class of non-elementary transitive convergence groups acting continuously on infinite compact spaces and we prove that this class of groups coincides with non-elementary boundary transitive hyperbolic groups as studied in \cite{Caprace}  (see Theorem \ref{theorem:transitiveconvergencegroup}).

Before we continue, we introduce some essential notations.
\begin{notation}
Throughout the remainder of this paper, $G$ will be a locally compact group acting continuously on a locally compact space $M$. By convention, we consider the Hausdorff property to be part of the definition of compactness. Often, a compact space is called a {\bf compactum} for short. We let $M^{(n)}$ denote the space of distinct $n$-tuples of $M$. A (not necessarily hyperbolic) locally compact group $G$ is called \textbf{elementary} if $G$ is compactly generated and either $G$ has no end (i.e. $G$ is compact) or if $G$ has $2$ ends (i.e. $G$ acts properly and cocompactly by isometries on the Euclidean line).
\end{notation}
%Since boundary-transitive hyperbolic groups have been described precisely as \emph{standard rank-one groups} in \cite{Caprace}, the same description holds for non-elementary transitive convergence groups.
\subsection{Sharply $n$-transitive actions}  %If $k$ is a field, then the affine group of $k$ acts on $k$ sharply $2$-transitively, and the projective group $\PGL_2(k)$ acts sharply $3$-transitively on the projective line $\Projline(k) = k \cup \{\infty\}$. Examples of sharply $n$-transitive actions include the symmetric groups $S_n$, $S_{n+1}$ and the alternating group $A_{n+2}$ with their standard permutation representations. In $1872$ \cite{Jordan}, Jordan showed that for $n\geq 6$, these are the only {\em finite} groups acting sharply $n$-transitively. He also showed that for $n=4$ and $n=5$, there is exactly one additional group, namely the Mathieu group $M_{11}$ for $n=4$ and $M_{12}$ for $n=5$. Later, J. Tits (\cite{Tits1} (1952)) and M. Hall (\cite{Hall} (1954)) generalized this result to the infinite case, showing that there are no {\em infinite} groups acting sharply $n$-transitively for $n\geq 4$.
%
%The classification of {\em finite} sharply $2$-transitive and sharply $3$-transitive groups was completed by Zassenhaus \cite{Zassenhaus1}, \cite{Zassenhaus2}. He showed that finite sharply 2-transitive groups are all affine transformation groups of finite near-fields which he later then classified \cite{Zassenhaus2}. If $G$ is a finite group acting sharply $n$-transitively on a set $M$, he showed that $M$ is the projective line over a finite field $F_q$ (with $q=p^m$ and $p$ prime). Moreover, either $G$ is the projective group, or $m$ is even and $G$ is a Mathieu group.
%
%The classification of {\em finite} sharply $3$-transitive actions was again completed by Zassenhaus \cite{Zassenhaus1}. He showed that the 
%set $M$ is the projective line over a finite field $F_q$ (with $q=p^m$ and $p$ prime). When $m$ is odd, $G$ is the projective 
%group, when $m$ is even, $G$ can also be a Mathieu group. 
%
% However, adding natural topological hypotheses leads to very strong results. Tits classified all  \cite{Tits3}, see also Kramer \cite{Kramer} for a modern approach. As a consequence, it follows that if the action is sharply $3$-transitive then $G = \PGL_2(k)$ for $k$ an Archimedean local field (i.e. $\RR$ or $\CC$) acting on the projective line.  In view of Tits' result, it remains to address the case where $M$ is disconnected. Actually, we can restrict to the case that $M$ is totally disconnected as it is elementary to check that a disconnected space $M$ admitting a continuous $2$-transitive group action, must be totally disconnected.
 
A group $G$ acts \textbf{sharply $n$-transitively} on a set $M$ if the induced action on the space of distinct $n$-tuples is free and transitive. In the present paper, we study sharply $n$-transitive actions of $\sigma$-compact locally compact groups on {\em compact} spaces $M$, with a view towards a conjectural classification. Tits \cite{Tits1} showed that if a group $G$ acts sharply $n$-transitively on a set $M$ for $n \geq 4$, then $M$ and thus $G$ are finite (no topology is involved). This solves the cases $n\geq 4$ as sharply $n$-transitive actions on finite sets are classified for all $n$ (see Kerby's book \cite{Kerby}). In Proposition \ref{prop:2-transitiveimpliesfinite}, we use elementary methods to show that there are no sharply $2$-transitive actions of groups on a compact infinite space $M$. So, only the case of sharply $3$-transitive actions on infinite compact spaces remains open. The case where $M$ is connected reduces to the projective groups $PGL_2(\RR)$ and $PGL_2(\CC)$ acting on the real and complex projective line respectively. This follows from the general classification of all faithful 2-transitive actions of locally compact, $\sigma$-compact groups on connected locally compact spaces $M$ due to Tits \cite{Tits2} (this classification was proven again later in \cite{Kramer} using a more modern approach). Connectedness is essential and in fact leads Tits to conclude that $M$ is a manifold and $G$ is a Lie group. It is worth noting however that the class of sharply $3$-transitive actions on {\em disconnected} compact spaces is not empty. Typical examples include the projective group $\PGL_2(k)$ acting on the projective line for $k$ a non-Archimedean local field (e.g. $k = \QQ_p$ for some prime $p$).

It is interesting to note that in all of the aforementioned examples, the projective group $G$ is hyperbolic and that the space $M$ on which it is acting is the hyperbolic boundary $\partial G$. 
%and that the rojective line is always equivariantly homeomorphic to the hyperbolic boundary of some Gromov hyperbolic space on which the group acts. 
Indeed $\PGL_2(\RR)$ is isomorphic to the full isometry group of the hyperbolic plane $\HH^2$ with boundary $S^1 \cong \Projline(\RR)$ and $\PGL_2(\CC)$ is isomorphic to the orientation-preserving isometry group of hyperbolic 3-space $\HH^3$ with boundary $S^2 \cong \Projline(\CC)$. In the non-Archimedean case, the projective line can be identified with the boundary of a locally finite tree, namely the Bruhat-Tits tree associated with $\PGL_2(k)$ (see \cite{Serre}). We will show that any $\sigma$-compact group acting continuously and sharply $3$-transitively on an infinite compact space $M$ is hyperbolic and that the space $M$ is $G$-equivariantly homeomorphic to $\partial G$.

Moreover, conjecturally, the only examples of sharply $3$-transitive actions of $\sigma$-compact groups $G$ on infinite compact spaces are the projective groups described above. Our following result provides strong evidence towards this conjecture.
\begin{alphtheorem}%[See Theorem \ref{theorem:sharply3}]
\label{theorem:sharply3intro}
Let $G$ be a $\sigma$-compact, locally compact group acting sharply $3$-transitively and continuously on an infinite disconnected compact space $M$. Then $G$ acts continuously, properly and vertex-transitively by automorphisms on a locally finite tree $T$ such that there is a $G$-equivariant homeomorphism $f:\partial T \rightarrow M$.
\end{alphtheorem}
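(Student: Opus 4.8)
The plan is to verify that the hypotheses of Theorem~\ref{theorem:Bowditch} are met, feed the sharp $3$-transitivity into it to obtain hyperbolicity together with an identification $\partial G \cong M$, and then exploit the disconnectedness of $M$ to manufacture the tree. First I would check that $G$ acts as a uniform convergence group on a perfect compactum. Since $G$ acts $3$-transitively it acts transitively on $M$, so $M$ is homogeneous; were it to have an isolated point then every point would be isolated, forcing $M$ to be finite, contrary to hypothesis, so $M$ is perfect. For $3$-properness, note that sharp $3$-transitivity means the orbit map $g \mapsto g\cdot p_0$ is a $G$-equivariant continuous bijection $G \to M^{(3)}$, where $M^{(3)}$ is locally compact as an open subset of the compactum $M^3$. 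As $G$ is $\sigma$-compact and locally compact, this map is automatically a homeomorphism (by the usual open mapping argument), conjugating the $G$-action on $M^{(3)}$ to the left translation action of $G$ on itself; the latter is proper, so the action is $3$-proper. Finally the action on $M^{(3)}$ is transitive, hence trivially $3$-cocompact. Theorem~\ref{theorem:Bowditch} now applies: $G$ is hyperbolic and there is a $G$-equivariant homeomorphism $\partial G \to M$.

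Next I would upgrade ``disconnected'' to ``totally disconnected'' using only $2$-transitivity. Connected components are permuted by homeomorphisms, so if some component $C$ contained two points $x, x'$ while another point $y$ lay outside $C$, I could choose $g$ with $gx = x$ and $gx' = y$; then $g$ fixes the component $C$ of $x$ yet sends $x' \in C$ to $y \notin C$, a contradiction. Hence every component is a singleton and $M$, and with it $\partial G$, is totally disconnected (being the boundary of a hyperbolic group, a Cantor set). In particular $\partial G$ is infinite, so $G$ is non-elementary.

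The heart of the argument, and the step I expect to be the main obstacle, is to pass from ``$G$ is hyperbolic with totally disconnected boundary'' to ``$G$ acts geometrically on a locally finite simplicial tree $T$ with $\partial T \cong \partial G$ equivariantly.'' This is the locally compact incarnation of the classical fact that a hyperbolic group is virtually free exactly when its boundary is totally disconnected. I would obtain it by combining the total disconnectedness of $\partial G$ --- which supplies finite (indeed empty) disconnecting sets and hence splittings of $G$ over compact open subgroups --- with an accessibility argument, or alternatively by showing that a proper geodesic hyperbolic space with totally disconnected boundary is quasi-isometric to a tree and then replacing the induced quasi-action by a genuine geometric action on a simplicial tree. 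Either route should yield a continuous, proper, cocompact action of $G$ on a locally finite tree $T$ whose boundary is $G$-equivariantly homeomorphic to $\partial G$, hence to $M$.

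It remains to promote cocompactness to vertex-transitivity, and here the simple transitivity on triples pays off. Passing to the minimal $G$-invariant subtree and suppressing valence-$2$ vertices (both $G$-equivariant, and harmless since $G$ is non-elementary) I may assume every vertex of $T$ has valence $\ge 3$. The median map $\mu\colon (\partial T)^{(3)} \to V(T)$, sending three distinct ends to the unique vertex lying on all three geodesics joining them, is $G$-equivariant, and it is surjective precisely because every vertex of valence $\ge 3$ is the median of a triple of ends chosen in three distinct branches. Transporting the simply transitive $G$-action on $M^{(3)} \cong (\partial T)^{(3)}$ through $\mu$ shows that $V(T) = \mu\big((\partial T)^{(3)}\big)$ is a single $G$-orbit, so $G$ acts vertex-transitively. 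Local finiteness then follows from properness and vertex-transitivity, since the edges at a vertex fall into finitely many orbits under the compact open vertex stabiliser, and the desired equivariant homeomorphism $f\colon \partial T \to M$ is the composite $\partial T \cong \partial G \cong M$, completing the proof.
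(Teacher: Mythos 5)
Your proposal is correct, and its skeleton --- verify the hypotheses of Theorem~\ref{theorem:Bowditch} via the orbit-map homeomorphism $G \to M^{(3)}$, deduce hyperbolicity and $\partial G \cong M$, produce a geometric action on a locally finite tree, and then get vertex-transitivity from the centre/median map together with sharp $3$-transitivity on triples of ends --- coincides with the paper's. The one genuine divergence is the middle step. The paper first invokes the classification of boundary-transitive hyperbolic groups \cite[Theorem 8.1]{Caprace} to conclude that $G$ is a standard rank one group, and then uses the disconnectedness of $M$ to land in the tree case of that classification (Theorem D of \cite{Caprace}); it only mentions parenthetically that Theorem~\ref{theorem:tree_groups} would also do. You take exactly that alternative route, and you make it work by first upgrading ``disconnected'' to ``totally disconnected'' with an elementary $2$-transitivity argument on connected components --- a step the paper never needs to spell out because the standard rank one classification does it for free. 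Your route is more self-contained (it stays within Theorems~\ref{theorem:Bowditch} and~\ref{theorem:tree_groups} of the paper and avoids the heavy $2$-transitive boundary machinery), while the paper's route yields the extra structural information that $G$ is a standard rank one group. Your explicit checks that $M$ is perfect (homogeneous, infinite, compact) and that transitivity on $M^{(3)}$ gives $3$-cocompactness are details the paper leaves implicit but which are exactly right; the final median argument for vertex-transitivity is the paper's argument verbatim.
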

%\marginpar{decide what to say about the larger conjecture for $\PSL_2(k)$}
%\begin{remark} One can easily modify the above conjecture to allow $\PSL_2(k)$ for example by requiring only that the action on the space of distinct triples is free and has finitely many orbits. It is unclear if Theorem \ref{theorem:sharply3intro} still holds under these weaker hypotheses. (is the action 3-proper?)
%\end{remark}
% 	\item Using the hypotheses of the above theorem, One may observe that $M$ is perfect (since it is infinite, compact and homogeneous) and totally disconnected (since it is disconnected and $2$-homogeneous).
% 	\item The tree $T$ can be chosen to have all vertices of degree $\geq 3$ by removing all vertices of degree 1 and "erasing" all vertices of degree 2. In that case, the action will be transitive on the vertices of $T$, since it is $3$-transitive on the boundary.

The key point on which relies Theorem~\ref{theorem:sharply3intro} is that $G$ acts $3$-properly on $M$. This is shown by noting that, under the hypothesis of the theorem, any orbit map $G \to M^{(3)}$ is a homeomorphism (see Lemma~\ref{lemma:sharplytransisproper}). Since the action of $G$ on itself is proper, so must be its action on $M^{(3)}$. Once we have derived hyperbolicity from Theorem \ref{theorem:Bowditch}, we can then conclude using Theorem $8.1$ in \cite{Caprace}. Alternatively, we could have concluded from Theorem \ref{theorem:tree_groups} below.
\subsection{Uniform convergence groups on the Cantor Set}
Our following result, of independent interest, is well-known for discrete groups.

\begin{alphtheorem} \label{theorem:tree_groups}
	Let $G$ be a non-elementary locally compact group. Then the following are equivalent:
	\begin{alphtheoremenum}
		\item $G$ acts $3$-properly and $3$-cocompactly on a totally disconnected perfect compact space $M$. \label{cond:ucg_cantor}
		\item $G$ is hyperbolic and its boundary $\partial G$ is totally disconnected. \label{cond:hyptdbdry}
		\item $G$ acts continuously, properly and cocompactly on a locally finite tree $T$. \label{cond:treegp}
		\item $G$ is compactly generated and quasi-isometric to a locally finite tree $T$. \label{cond:qitotree}
	\end{alphtheoremenum}
	In that case, the spaces $\partial T, \partial G$ and $M$ are all equivariantly homeomorphic.
\end{alphtheorem}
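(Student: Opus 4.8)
The plan is to prove the four conditions equivalent through the cycle \ref{cond:ucg_cantor}$\Rightarrow$\ref{cond:hyptdbdry}$\Rightarrow$\ref{cond:treegp}$\Rightarrow$\ref{cond:qitotree}$\Rightarrow$\ref{cond:ucg_cantor}, while carrying along the identifications between $M$, $\partial G$ and $\partial T$ so that the concluding homeomorphism statement drops out for free. The first implication is immediate from Theorem~\ref{theorem:Bowditch}: a continuous, $3$-proper and $3$-cocompact action on the perfect compactum $M$ forces $G$ to be hyperbolic and supplies a $G$-equivariant homeomorphism $\partial G \to M$; since $M$ is totally disconnected, so is $\partial G$. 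This already records the equivariant homeomorphism $\partial G \cong M$.

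Two of the remaining implications are standard geometric group theory transported to the locally compact category. For \ref{cond:treegp}$\Rightarrow$\ref{cond:qitotree} I would invoke the Milnor--\v{S}varc lemma for locally compact groups: a continuous, proper, cocompact action on the locally finite, hence proper geodesic, tree $T$ shows that $G$ is compactly generated and that any orbit map $G \to T$ is a $G$-equivariant quasi-isometry. For \ref{cond:qitotree}$\Rightarrow$\ref{cond:ucg_cantor} I would use quasi-isometry invariance: working with a Cayley--Abels graph of $G$, a tree is $0$-hyperbolic and both hyperbolicity and the boundary are quasi-isometry invariants, so $G$ is hyperbolic with $\partial G$ homeomorphic to $\partial T$, which is totally disconnected. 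Non-elementarity excludes the compact and two-ended cases and makes $\partial G$ perfect, so the easy direction of the boundary characterization (recalled in Section~\ref{sc:Bowditch}) gives that $G$ acts continuously, $3$-properly and $3$-cocompactly on the totally disconnected perfect compactum $M := \partial G$.

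The heart of the matter is \ref{cond:hyptdbdry}$\Rightarrow$\ref{cond:treegp}, upgrading a hyperbolic $G$ with totally disconnected boundary to a proper cocompact action on a locally finite simplicial tree; this is where I expect the only genuine difficulty. The efficient route is to cite the structure theory of locally compact hyperbolic groups, namely Theorem~8.1 of \cite{Caprace}, which produces exactly such a tree action out of total disconnectedness of the boundary. A self-contained argument would instead run through a locally compact Stallings-type theorem: total disconnectedness of $\partial G$ makes $G$ infinitely ended, hence $G$ splits over a compact open subgroup; an accessibility argument for hyperbolic groups terminates the iterated splitting in a finite graph of groups with compact edge groups, and total disconnectedness of the boundary forces the vertex groups to be compact as well, so that Bass--Serre theory yields the desired $G$-tree $T$, with local finiteness and cocompactness coming from compact generation and properness of the action. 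It is precisely this accessibility/termination step in the locally compact setting that is delicate, which is why appealing to \cite{Caprace} is the economical choice.

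Finally, assuming the hypotheses hold, I would combine the $G$-equivariant homeomorphism $\partial G \cong M$ obtained in the first implication with the $G$-equivariant homeomorphism $\partial G \cong \partial T$ induced by the proper cocompact action of \ref{cond:treegp} (whose orbit map is an equivariant quasi-isometry and hence extends to an equivariant boundary homeomorphism). This shows that the three spaces $\partial T$, $\partial G$ and $M$ are all $G$-equivariantly homeomorphic, as asserted.
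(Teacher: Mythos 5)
Your overall architecture matches the paper's: \eqref{cond:ucg_cantor} $\Rightarrow$ \eqref{cond:hyptdbdry} is Theorem~\ref{theorem:Bowditch}, the converse is the easy direction (a non-elementary hyperbolic group is a uniform convergence group on its perfect boundary), \eqref{cond:treegp} $\Rightarrow$ \eqref{cond:qitotree} $\Rightarrow$ \eqref{cond:hyptdbdry} is routine quasi-isometry invariance, and the only real work is \eqref{cond:hyptdbdry} $\Rightarrow$ \eqref{cond:treegp}; the paper organizes the implications slightly differently but equivalently. For that hard implication, your \emph{self-contained} sketch is essentially what the paper does: first observe that the identity component $G_0$ acts trivially on the totally disconnected boundary and hence is compact, so Proposition~\ref{prop:nice_space} gives a proper cocompact action on a locally finite graph $X$; then $X$ has no thick ends, hence is accessible in the sense of Thomassen--Woess \cite{ThomaWoess}, and the Dicks--Dunwoody structure trees \cite{DicksDun} yield a cocompact $G$-tree with compact edge stabilizers and at-most-one-ended vertex stabilizers; finally each vertex stabilizer is quasi-isometrically embedded, hence hyperbolic with totally disconnected boundary, and since a one-ended hyperbolic group has a connected boundary with at least two points, the vertex stabilizers are compact and $T$ is locally finite. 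Your sketch compresses the last step into ``total disconnectedness forces vertex groups to be compact''; the mechanism above is worth making explicit, but the idea is right.

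The one concrete error is your proposed ``efficient route'': Theorem~8.1 of \cite{Caprace} (restated as Theorem~\ref{theorem:boundarytransitivehyperbolic} in this paper) is the characterization of \emph{boundary-transitive} hyperbolic groups as standard rank one groups. It requires transitivity of the $G$-action on $\partial G$, which is not a hypothesis of Theorem~\ref{theorem:tree_groups}, so it cannot be invoked to deduce \eqref{cond:treegp} from \eqref{cond:hyptdbdry} alone. (Indeed, the paper goes the other way: in Section~\ref{sec:end-transitivegraphs} it uses Theorem~\ref{theorem:tree_groups} precisely to \emph{avoid} a circular appeal to that result.) The correct external reference for the equivalence of the last three conditions is Cornulier \cite{Cornulier}. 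Since your fallback accessibility argument is the one that actually works, this is a citation error rather than a fatal gap, but as written the ``economical choice'' you recommend does not prove the implication.
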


As a consequence, the space $M$ in the first condition is metrizable, so that the above theorem is in fact a characterization of uniform convergence groups acting on the Cantor Set. Analogously, there is a celebrated characterization of uniform convergence groups on the circle as those locally compact groups acting properly and cocompactly by isometries on the hyperbolic plane \cite{Tuk88,Gabai,CassJung,Hin}, see also \cite[Theorem 5.D.1]{Cornulier}.

We stress here that the family $\mathcal T$ of locally compact groups satisfying the equivalent conditions of Theorem~\ref{theorem:tree_groups} is much richer than its subfamily consisting of discrete groups. Indeed, if $G$ is discrete, then the conditions of Theorem \ref{theorem:tree_groups} are equivalent to $G$ being finitely generated virtually \{free non-abelian\} \cite[II.2.6]{Serre}. In particular, such groups are always linear, and have many normal subgroups. On the other hand the full automorphism group of a $d$-regular tree is not linear, and has a simple subgroup of index 2 \cite{Titstree}. We refer the reader to \cite{CapDeMedts} for more on linearity. There are also groups $G$ in $\mathcal T$ which do not admit any free lattice. Indeed, there exist such groups which are non-unimodular \cite[(4.12)]{BassKulk}, so that they do not admit any lattice. In fact, this is the only obstruction: if $G$ is assumed to be unimodular, then it admits a uniform lattice \cite{BassKulk}, see also \cite{BassLub}.

The last three conditions in Theorem~\ref{theorem:tree_groups} were already known to be equivalent. We refer to the recent paper of Cornulier \cite{Cornulier} for a thorough discussion of these equivalences and their history.

\subsection{Transitive convergence groups} In \cite{Caprace}, the authors study boundary transitive hyperbolic groups.
%  Modding out by the kernel of the action on the boundary, which is always a compact, normal subgroup of $G$, we can assume that $G$ is a {\em standard rank 1 group}, i.e. that the action of $G$ on the boundary is also faithful. 
Using our Theorem \ref{theorem:BowBowditch+}, we are able to give another characterization of these groups.
\begin{alphtheorem} \label{theorem:transitiveconvergencegroup}
Let $G$ be a $\sigma$-compact, locally compact, non compact group acting continuously and transitively as a convergence group on an infinite compact space $M$. Then $G$ is a boundary-transitive hyperbolic group and $M$ is $G$-equivariantly homeomorphic to $\partial G$.  Consequently, $G$ has a maximal compact normal subgroup $K$ such that one of the following holds:
\begin{itemize}
	\item $G/K$ is the isometry group or the orientation preserving isometry group of a rank one symmetric space $X$.
	\item $G/K$ acts continuously, properly, cocompactly and faithfully on a locally finite tree $X$ such that the induced action on the boundary $\partial X$ is 2-transitive.
\end{itemize}
Moreover, $M$ is equivariantly homeomorphic to the visual boundary of $X$.
\end{alphtheorem}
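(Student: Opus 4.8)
The plan is to verify the hypotheses of Theorem~\ref{theorem:BowBowditch+} and feed its conclusion into the classification of boundary-transitive hyperbolic groups of \cite{Caprace}; write the action as $G \actson M$. First I would record the easy consequences of transitivity. Since $G$ acts transitively, $M$ is homogeneous; if $M$ had an isolated point, then every point would be isolated, forcing $M$ finite by compactness, contrary to hypothesis, so $M$ is perfect. Moreover the action is minimal, hence the limit set of the convergence action, being a nonempty closed invariant set, is all of $M$; as $M$ is infinite the convergence action is \emph{non-elementary}. I would also note that the kernel of the action is compact: it fixes $M^{(3)}$ pointwise and $M^{(3)} \neq \emptyset$, so by $3$-properness it lies in a point stabilizer of a proper action and is therefore relatively compact, hence compact.

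Next I would produce conical limit points. A non-elementary convergence group contains a loxodromic element $g$, with distinct attracting and repelling fixed points $g^{+} \neq g^{-}$ such that $g^{-n}$ converges to $g^{-}$ uniformly on compact subsets of $M \setminus \{g^{+}\}$ \cite{Tukia}. Testing the definition with the sequence $(g^{-n})$ shows $g^{+}$ is a conical limit point: indeed $g^{-n}(g^{+}) = g^{+}$ for all $n$, while $g^{-n}(y) \to g^{-}$ for every $y \neq g^{+}$, and $g^{+} \neq g^{-}$. The set of conical limit points is $G$-invariant — if $g_n(x) \to a$ and $g_n(y) \to b$ for all $y \neq x$, then the translated sequence $(g_n h^{-1})$ witnesses conicality of $hx$ — so by transitivity \emph{every} point of $M$ is a conical limit point.

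The technical heart is to establish that $M$ is \emph{metrizable}, which is needed to invoke Theorem~\ref{Theorem:Bowditch+}. Non-compactness of $G$ is essential here: a compact group acting on itself by translation is a transitive convergence action in which the space acted upon need not be metrizable. Passing to the faithful action of $G/K$, where $K$ is the compact kernel, I would use the Kakutani--Kodaira theorem to approximate $G/K$ by second countable quotients and exploit the loxodromic dynamics — the nested neighbourhoods $\{g^{n}V\}_{n \ge 0}$ of $g^{+}$ already furnish a countable neighbourhood basis at $g^{+}$, and homogeneity spreads first countability over all of $M$ — in order to upgrade first countability to second countability, whence metrizability of the compactum $M$. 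I expect this reduction to be the main obstacle, since first countability alone does not force metrizability for homogeneous compacta, and the argument must genuinely use the convergence dynamics together with $\sigma$-compactness.

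With metrizability in hand the remainder is mechanical. Every point being a conical limit point, Theorem~\ref{Theorem:Bowditch+} shows that $G$ acts $3$-cocompactly; as $M$ is a perfect compactum and the action is $3$-proper, Theorem~\ref{theorem:Bowditch} then gives that $G$ is hyperbolic together with a $G$-equivariant homeomorphism $\partial G \to M$. In particular $G$ acts transitively on $\partial G$, that is, $G$ is a non-elementary boundary-transitive hyperbolic group. Finally I would quote the structure theorem for such groups from \cite{Caprace}: $G$ has a maximal compact normal subgroup $K$ with $G/K$ either the isometry group or the orientation-preserving isometry group of a rank one symmetric space $X$, or else acting continuously, properly, cocompactly and faithfully on a locally finite tree $X$ with $2$-transitive boundary action. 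Transporting the homeomorphism $\partial G \to M$ through the identification $\partial G \cong \partial X$ then exhibits $M$ as equivariantly homeomorphic to the visual boundary of $X$.
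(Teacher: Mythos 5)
Your overall architecture is exactly that of the paper: produce a loxodromic element, deduce that its attracting fixed point is conical, spread conicality by transitivity, establish metrizability of $M$, and then chain Theorem~\ref{Theorem:Bowditch+}, Theorem~\ref{theorem:Bowditch} and the Caprace--Cornulier--Monod--Tessera classification. The preliminary reductions (perfectness of $M$, compactness of the kernel via $3$-properness, $G$-invariance of the set of conical limit points) are all correct. However, there is a genuine gap at the step you yourself flag as the main obstacle: metrizability of $M$. Your proposed route --- extract first countability at $g^{+}$ from the nested sets $g^{n}V$, spread it by homogeneity, and then ``upgrade'' to second countability --- does not close, and as you note it cannot close on general grounds (the double arrow space is a first countable homogeneous non-metrizable compactum). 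You never actually supply the upgrade, so the proof is incomplete as written.

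The paper closes this gap with a different, and cleaner, use of Kakutani--Kodaira. The key missing ingredient is the following dynamical fact (the Proposition preceding Corollary~\ref{corollary:Mmetrizable}): if $G$ is non-compact and acts transitively, then \emph{every compact normal subgroup of $G$ acts trivially on $M$}. This is proved by conjugating a hypothetical non-trivially acting element $\kappa$ of a compact normal subgroup by high powers of a loxodromic $\gamma$ and observing that $\gamma^{-i}\kappa\gamma^{i}$ applied to a pair of points leaves every compact subset of $M^{(2)}$, contradicting compactness of the subgroup. Granting this, one passes to the faithful quotient $G/K$ ($K$ the kernel, which you correctly showed is compact): Kakutani--Kodaira provides a compact normal $N \lhd G/K$ with $(G/K)/N$ second countable, but $N$ acts trivially and the action is faithful, so $N$ is trivial and $G/K$ itself is second countable. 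Finally, $\sigma$-compactness enters through Lemma~\ref{lemma:Baireargument}: the orbit map $(G/K)/(G/K)_x \to M$ is a homeomorphism (open by a Baire category argument), so $M$ is a continuous open image of a second countable group, hence second countable, hence metrizable as a compactum. Inserting this argument in place of your first-countability sketch makes your proof complete and essentially identical to the paper's.
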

The group $K$ above is the kernel of the action of $G$ on its boundary, so that $K = \{1\}$ if and only if $G$ acts faithfully on $M$. Following \cite{Caprace}, we call the groups $G/K$ as above \textbf{standard rank one groups}.
% The groups in the conclusion of Theorem~\ref{theorem:transitiveconvergencegroup}, which can be naturally called \textbf{standard rank one groups}, are shown in \cite{Caprace} to coincide with locally compact non-elementary hyperbolic groups acting faithfully and transitively on their visual boundary (see Theorem~\ref{theorem:boundarytransitivehyperbolic}). Thus Theorem \ref{theorem:transitiveconvergencegroup} essentially reduces to showing that the locally compact group $G$ is hyperbolic and that $M$ is $G$-equivariantly homeomorphic to $\partial G$.
\subsection{$n$-proper actions} From Section \ref{subsc:n=2}, it follows that a $\sigma$-compact group acting $2$-properly, $2$-cocompactly on a metrizable compactum must itself be compact. Having also studied $3$-proper, $3$-cocompact actions, we next look at the case $n>4$. In view of Tits' result that there does not exist any sharply $n$-transitive infinite group for $n \geq 4$, the following (which reformulates a question initially asked by G. Mess to B. Bowditch) is of particular interest.
\begin{question} Does there exist an infinite locally compact group $G$ acting continuously, $n$-properly and $n$-cocompactly on a compact, perfect metrizable space $M$ for $n \geq 4$?
\end{question} 
The following theorem is an obstruction to such an action, showing that the space $M$ must satisfy a certain ``disconnectedness property''. Precisely, we obtain:   
\begin{alphtheorem} \label{theorem:finitedisconnecting}
	Let $n \geq 4$. Suppose that $G$ is a locally compact group acting continuously, $n$-properly and $n$-cocompactly on a compact, locally connected, perfect and metrizable space $M$. Then there is a set $P \subset M$ of cardinality at most $\lfloor \frac{n-1}{2} \rfloor$ such that $M \backslash P$ is not connected. %\label{item:finitedisconnecting}
\end{alphtheorem}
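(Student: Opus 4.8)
The plan is to assume $M$ is connected, since otherwise $P=\emptyset$ works as $|\emptyset|=0\le\lfloor\frac{n-1}{2}\rfloor$; thus $M$ becomes a Peano continuum. I would first record the dynamical reformulation of $n$-properness (Section~\ref{sec:dynamicconvergenceproperty}): for any sequence $g_i\to\infty$, after passing to a subsequence there are finite sets $A,R\subset M$, each of cardinality at most $n-1$, with $g_i\to A$ and $g_i^{-1}\to R$ uniformly on compacta of $M\setminus R$ and $M\setminus A$ respectively. The key numerical input is that $n$-properness forbids any infinite-order $g$ from fixing $n$ distinct points: these would give a constant, hence bounded, $\langle g\rangle$-orbit in $M^{(n)}$, contradicting properness. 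In particular, a loxodromic $g$ whose fixed set splits dynamically as $A^+\sqcup A^-$ (attracting and repelling) satisfies $|A^+|+|A^-|=|\Fix(g)|\le n-1$.

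The topological heart is a disconnection lemma: if $g$ is loxodromic with $|A^+|\ge 2$, then $M\setminus A^-$ is disconnected. Indeed, fix disjoint open neighbourhoods $U_1,\dots,U_{|A^+|}$ of the points of $A^+$; for large $i$ the map $g^i$ sends every compact connected subset of $M\setminus A^-$ into a single $U_j$, since a connected set cannot meet two disjoint open sets covering its image. As $M\setminus A^-$ is a connected, locally connected, locally compact metrizable space, any two of its points lie in a common compact connected subset; choosing points in the basins of two distinct attracting fixed points then yields a contradiction if $M\setminus A^-$ were connected. By symmetry, applied to $g^{-1}$, if $|A^-|\ge 2$ then $M\setminus A^+$ is disconnected. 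Hence whenever $\max(|A^+|,|A^-|)\ge 2$ we obtain a disconnecting set: if both parts have at least two points we take the smaller one, of size $\min(|A^+|,|A^-|)\le\lfloor\frac{|A^+|+|A^-|}{2}\rfloor\le\lfloor\frac{n-1}{2}\rfloor$; and if one part is a single point, that point already disconnects $M$ by the lemma (the other part having at least two points), giving a cut set of size $1\le\lfloor\frac{n-1}{2}\rfloor$ since $n\ge 4$.

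It remains to produce a loxodromic $g$ with $\max(|A^+|,|A^-|)\ge 2$, which I expect to be the main obstacle. I would first rule out the degenerate alternative that every sequence $g_i\to\infty$ has singleton attracting and repelling sets. In that case $G\actson M$ is a convergence group, i.e.\ the action is $3$-proper, and since $n$-cocompactness implies $3$-cocompactness via the equivariant surjection $M^{(n)}\to M^{(3)}$ onto the first three coordinates, the action is a uniform convergence group on a perfect compactum. Theorem~\ref{theorem:Bowditch} then makes $G$ hyperbolic with $M\cong\partial G$; but the coarse invariant sending a distinct $4$-tuple $(\xi_1,\xi_2,\xi_3,\xi_4)$ to the distance between the geodesics $(\xi_1\xi_2)$ and $(\xi_3\xi_4)$ is $G$-coarsely invariant and unbounded, so the action on $(\partial G)^{(4)}$ cannot be cocompact, contradicting $n$-cocompactness for $n\ge 4$. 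Hence some sequence $h_i\to\infty$ has a limit set with at least two points.

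The remaining, delicate, part is a multi-point ping-pong argument upgrading this sequence to an honest element. After separating the attracting and repelling sets, arranging $A\cap R=\emptyset$ (possibly by replacing $h_i$ by suitable products $h_i h_j^{-1}$), and checking that each attracting point is self-attracting, the element $g=h_i$ for large $i$ maps a small neighbourhood of each point of $A$ into itself and is contracting there, hence fixes at least $|A|\ge 2$ points, producing the required loxodromic with $|A^+|\ge 2$. Verifying that the basins are genuinely populated and that the neighbourhoods ping-pong, exactly as in the classical convergence-group setting but now with finite attracting and repelling sets, is where the main care is needed; the arithmetic of the previous paragraphs then delivers the cut set of cardinality at most $\lfloor\frac{n-1}{2}\rfloor$.
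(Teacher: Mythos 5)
Your overall mechanism is the right one, and it is the paper's: Gerasimov's reformulation of $n$-properness gives finite sets of total size at most $n-1$, and on a locally connected $M$ a connected compact subset missing the ``repelling'' set must be pushed into a \emph{single} attracting point, so that two or more attracting points force $M$ minus the repelling set to be disconnected; the arithmetic $\min(|P|,|Q|)\le\lfloor\frac{n-1}{2}\rfloor$ is also the same. But your proof has a genuine gap at the step you yourself flag as delicate: the reduction to an honest loxodromic element $g$ with $\max(|A^+|,|A^-|)\ge 2$. This multi-point ping-pong is not carried out, and the sketch is not repairable as stated. Replacing $h_i$ by products $h_ih_j^{-1}$ to arrange $A\cap R=\emptyset$ does not obviously preserve either the bound $|A|+|R|\le n-1$ or the property $|A|\ge 2$ (composing two maps whose graphs converge into sets of the form $(P\times M)\cup(M\times Q)$ gives a limit structure that is hard to control). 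Worse, the claim that ``each attracting point is self-attracting'' fails in general when $|A|\ge 2$: a point $x$ near $a\in A$ is only guaranteed to be sent by $h_i$ near \emph{some} point of $A$ (which point depends on the connected component of $M\setminus R$ containing $x$, by exactly the uniform-convergence argument you use elsewhere), so there is no reason that $h_i(U_a)\subset U_a$ for every $a\in A$ rather than $h_i$ permuting or collapsing the neighbourhoods. Your ``key numerical input'' about elements fixing $n$ points is therefore never brought to bear on anything that has been constructed.

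The fix is to observe that the detour through a group element is unnecessary, which is how the paper proceeds. Since $M$ is perfect, Lemma~\ref{lemma:NpropMcocomp} (an elementary argument, not requiring your appeal to Theorem~\ref{theorem:Bowditch} and cross-ratios on $\partial G$) shows the action is not $3$-proper, so there is a wandering sequence $(\gamma_i)_i$ with no collapsing subsequence; Proposition~\ref{prop:Gerasimov} then yields $\graph(\gamma_i)\to Z\subset(P\times M)\cup(M\times Q)$ with $3\le |P|+|Q|\le n-1$, where $P$ and $Q$ may be taken minimal and, after replacing $\gamma_i$ by $\gamma_i^{-1}$, $|P|\le\lfloor\frac{n-1}{2}\rfloor$ and $|Q|\ge 2$. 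Your disconnection argument then applies verbatim to the \emph{sequence}: by Lemma~\ref{lemma:uniformconvergence} and Proposition~\ref{prop:uniform} each connected component of $M\setminus P$ is sent, in the limit, to a single point of $Q$, and minimality of $Q$ guarantees that both of (at least) two points of $Q$ actually arise as such limits — this is exactly the ``basins are genuinely populated'' issue you defer, and it is resolved by minimality of $Q$ rather than by any dynamical upgrading. Hence $M\setminus P$ has at least two components. If you do want to salvage your route via an actual element, you would need to prove the ping-pong lemma in full, including the self-attraction of every point of $A$, and that is substantially harder than the theorem itself.
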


Gerasimov in \cite{Gerasimov} refers to this result already appearing as the main result in an unpublished preprint of Yaman \cite{Yaman}. Our proof uses Gerasimov's reformulation of $n$-properness, but is otherwise elementary.	

Theorem \ref{theorem:finitedisconnecting} emphasizes the different behaviours for $n=4$ and $n=3$ respectively. Indeed, for $n=4$, the local connectedness of $M$ implies that $M$ has a global cut point. However, the solution of the cut-point conjecture (see \cite{B2}, \cite{SW1}) shows that whenever the boundary of a discrete hyperbolic group is connected, then it has no global cut points and it is locally connected (\cite{BM}).

%Suppose that $M$ is a {\em locally connected}, compact, metrizable space admitting an $n$-proper but not $3$-proper action for some $n \geq 4$. Then there exists a set $P \subset M$ of cardinality at most $\lfloor \frac{n-1}{2} \rfloor$ such that $M \backslash P$ is not connected. 
%\end{alphtheorem}
%Intuitively, this result motivates the study of the case that $M$ is totally disconnected (see also Theorem \ref{theorem:sharply3intro}).

\subsection{Further questions and remarks} We remark that the methods of this paper also provide an alternative proof of a theorem of Nevo \cite{Nevo} about locally finite graphs with infinitely many ends and with a non-compact automorphism group acting transitively on the space of ends (see Section \ref{sec:end-transitivegraphs}).\\

Compactness of the space $M$ in Theorem~\ref{theorem:sharply3intro} cannot be dropped. Indeed, the (countable) group $G = \PGL_2(\QQ)$ with the discrete topology acts sharply 3-transitively on the (discrete) projective line $M = \QQ \cup \{\infty\}$, which is not compact, and hence not homeomorphic to the boundary of a locally finite tree.
%Furthermore, non-discrete topologies may be put on $G$ and $M$ by viewing $\QQ$ as a dense subset of $\RR$ or $\QQ_p$ for some prime $p$. 

The fact that $G$ in Theorem~\ref{theorem:sharply3intro} is hyperbolic, and in particular compactly generated, depends crucially on the fact that $M$ is compact. We are thus led to ask the following question, which is true for connected $M$ (as follows from Tits' classification \cite{Tits2}).
\begin{question} Suppose $G$ is a non-discrete, compactly generated locally compact group, acting continuously and sharply $3$-transitively on a locally compact space $M$. Is $M$ necessarily compact?
\end{question}

The next remark shows that the hypothesis of $\sigma$-compactness in Theorem \ref{theorem:sharply3intro} cannot be omitted either.
\begin{remark} \label{rmk:discretesharply3trans} Theorem~\ref{theorem:sharply3intro} is false if $G$ is not assumed to be $\sigma$-compact. Indeed, the group $\PGL_2(\QQ_p)$ endowed with the discrete topology acting on the projective line (with the usual topology) satisfies all other hypotheses. However that group is not $\sigma$-compact, and so it cannot act properly on any $\sigma$-compact space, and in particular on a locally finite tree.
\end{remark}
The $\sigma$-compactness is used to establish that any orbit map $G\rightarrow M^{(3)}$ is a homeomorphism.
In view of Remark \ref{rmk:discretesharply3trans}, it is natural to ask whether the only non-$\sigma$-compact sharply $3$-transitive groups of homeomorphism of an infinite compact set come from an inappropriate choice of topology.

\begin{question} Suppose an abstract group $G$ acts sharply $3$-transitively by homeomorphisms on a compact set $M$. Can $G$ be endowed with a topology which turns it into a topological group acting continuously on $M^{(3)}$ such that any orbit map $G \to M^{(3)}$ is a homeomorphism?
\end{question}

The $\sigma$-compactness in Theorem~\ref{theorem:transitiveconvergencegroup} is only used to show that $M$ is metrizable, so that we can use Theorem~\ref{Theorem:Bowditch+}. Removing the hypothesis thus amounts to asking whether metrizability of $M$ is needed in Theorem~\ref{Theorem:Bowditch+}, the proof of which relies heavily on a metric on $M$.

\subsection{Structure of the paper} The paper is organized as follows. In Section \ref{sec:tools} we recall the ingredients of the proof of Bowditch's topological characterization of hyperbolicity in the discrete case \cite{Bowditch}. In Sections \ref{sc:Bowditch} and \ref{sc:Bowditch+}, we use these ingredients to show that uniform convergence groups are hyperbolic and to characterize $3$-cocompactness in terms of conical limit points, thus proving both parts of Theorem~\ref{theorem:BowBowditch+}. In Section~\ref{sc:sharply-n-transitive}, we give an overview of known results about sharply $n$-transitive groups and we prove Theorem~\ref{theorem:sharply3intro}. We prove the characterization of transitive convergence groups (Theorem~\ref{theorem:transitiveconvergencegroup}) in Section~\ref{sc:transitiveconvergence}. Uniform convergence groups on the Cantor Set are characterized by proving Theorem~\ref{theorem:tree_groups} in Section~\ref{sec:tree_groups}. Finally in Section~\ref{sec:nproper}, we concentrate on $n$-proper actions for $n \geq 4$ and prove Theorem~\ref{theorem:finitedisconnecting}.
	
	\section*{Acknowledgements}
	The authors are grateful to Pierre-Emmanuel Caprace for suggesting the problem, as well as for helpful discussions. The authors also thank him and Ralf K\"ohl for motivating discussions which lead to Theorem \ref{theorem:transitiveconvergencegroup}.

\section{The main tools} \label{sec:tools}

The proof of Theorem~\ref{theorem:BowBowditch+} relies on the same machinery as in \cite{Bowditch}. We start by introducing the main ingredients and explain the interplay between them. We refer the reader to \cite{Bowditch} for more details.

The following notation is used throughout the paper.\\
{\bf Notation 1.} For any real numbers $k,p,q\in \R$, we write $p\aeq_k q, p\asm_k q$ and $p\avsm_k q$ to mean, respectively, $\lvert p-q \rvert \leq k, p\leq q+k$ and $p\leq q-k$.
\subsection{Crossratios}
\subsubsection{Basic definitions and examples}
A metric tree $\tau$ is a tree where each edge is dedicated a length $r\in (0,\infty)$. The length of a path in this tree is defined as the sum of the lengths of the edges contained in this path. The distance between vertices  $x,y\in \tau$, denoted $d_\tau(x,y)$, is defined as the lenght of the unique path $[x,y]$ without backtracking that connects $x$ to $y$.

Let us define the following $4$-ary operation:
\begin{equation}
 \forall x,y,z,w\in \tau, \mbox{ we set} (xy\mid zw)_\tau := \frac{1}{2} \max\{0,d(x,z)+d(y,w)-d(x,y)-d(z,w)\} .
 \label{eq:treecrossratio}
 \end{equation}
It is easy to check that the above formula represents the distance between the paths $[x,y]$ and $[z,w]$ in $\tau$. This definition benefits from quite some symmetry: for every $x,y,z,w\in \tau$, we have $(xy\mid zw)_\tau=(yx\mid zw)_\tau=(zw\mid xy)_\tau$. 

\begin{defn}
Let $M$ be a set. A map $M^{(4)}\rightarrow [0,+\infty), (x,y,z,w)\mapsto (xy\mid zw)$, satisfying the above mentioned symmetry conditions (i.e. $(xy\mid zw)=(yx\mid zw)=(zw\mid xy)$, $\forall (x,y,z,w)\in M^{(4)}$), is called a {\bf crossratio}.
\end{defn}

\begin{defn}[see \cite{Bowditch}]
Given $k\in \R^+$. We say that a crossratio $(..\mid ..)$ on a set $M$ is {\bf k-hyperbolic} if it satisfies the following axioms:
\begin{enumerate}
\item If $F\subseteq M$ is a $4$-element subset, then we can write $F=\{x,y,z,w\}$ with $(xz\mid yw)\aeq_k 0$ and $(xw\mid yz)\aeq_k 0$.
\item If $F\subseteq M$ is a $5$-element subset, then we can write $F=\{x,y,z,w,u\}$ with
\[ \begin{array}{c}
(xy\mid zu) \aeq_k (xy\mid wu) \\
(xu\mid zw) \aeq_k (yu\mid zw) \\
(xy\mid zw) \aeq_k (xy\mid zu)+(xu\mid zw),
\end{array} \]
and with $(ab\mid cd) \aeq_k 0$ in all other case where $a,b,c,d\in F$ are distinct (allowing for the symmetries of the crossratio). 
\end{enumerate}
A crossratio is called {\bf hyperbolic} if it is $k$-hyperbolic for some $k\in \R^+$.
\label{defn:hypcro}
\end{defn}
Note that in the case of a metric tree $\tau$, the crossratio $(..\mid..)_\tau$ as defined in Equation \ref{eq:treecrossratio}, is $0$-hyperbolic. 
In fact, as the above definition indicates, given a hyperbolic crossratio, there is a constant such that when you restrict the crossratio to any $4$- or $5$-point subset, then it coincides up to this constant with the crossratio $(..\mid ..)_\tau$ of a metric tree. It turns out that the following more general result is valid.
\begin{theorem}[Theorem 2.1 in \cite{Bowditch}]
For all $n\in \N$, there is some constant $h(n)$, depending only on $n$, such that if $(..\mid ..)$ is a $k$-hyperbolic crossratio defined on a set $M$ of cardinality $n$, then we can embed $M$ in a metric tree $\tau$ such that for all $(x,y,z,w)\in M^{(4)}$, we have $\lvert (xy\mid zw) -(xy\mid zw)_\tau \rvert \leq kh(n)$. \label{theorem:approxtree}
\end{theorem}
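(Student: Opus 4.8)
The plan is to prove the statement by induction on $n=|M|$, building the tree one point at a time. Before starting I would record one elementary but decisive observation about the model crossratio $(..\mid..)_\tau$: it is insensitive to the ``stalk lengths'' of the embedded points. Concretely, if one slides a point $x$ outward along its pendant edge, increasing every distance $d(x,\cdot)$ by a common constant, then in each expression $(ab\mid cd)_\tau=\tfrac12\max\{0,\,d(a,c)+d(b,d)-d(a,b)-d(c,d)\}$ the point $x$ occurs in exactly two distance terms, once with a $+$ sign and once with a $-$ sign, so the constant cancels and the crossratio is unchanged. Thus a metric-tree crossratio depends only on the \emph{internal} (branching) structure of the embedding, not on how far the marked points sit from the branch locus. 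This reduces the problem of embedding a new point to that of choosing a single \emph{location} in the already-constructed tree at which to attach it; the length of the new pendant edge is irrelevant and may be taken arbitrarily small, in particular to ensure injectivity of the embedding.

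The base cases $n\le 5$ are exactly what the hyperbolicity axioms are designed to supply. For $n=4$, axiom (1) relabels $M=\{x,y,z,w\}$ so that $(xz\mid yw)\aeq_k 0$ and $(xw\mid yz)\aeq_k 0$; realizing $M$ as the four extremities of an ``H''-shaped tree whose two transversal geodesics are $[x,y]$ and $[z,w]$, separated by a segment of length $(xy\mid zw)$, matches all three crossratios up to $k$, so a bounded $h(4)$ works, and axiom (2) does the same for $n=5$. For the inductive step I would fix $u\in M$, apply the hypothesis to $M'=M\setminus\{u\}$ to obtain a metric tree $\tau'$ and an embedding with $\lvert(ab\mid cd)-(ab\mid cd)_{\tau'}\rvert\le k\,h(n-1)$ for all four-tuples in $M'$, and then attach $u$. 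Since adding a pendant edge does not alter distances among the points of $M'$, the old estimates survive verbatim; the remaining task is to place $u$ at a point $p\in\tau'$ so that every new crossratio $(uy\mid zw)_\tau$ — which, by the first paragraph, is determined by $p$ together with the positions of $y,z,w$ in $\tau'$ — agrees with the given $(uy\mid zw)$ up to a controlled error. To locate $p$, I would feed each four-point set $\{u,y,z,w\}$ into axiom (1): it fixes the combinatorial type of the $H$ on these four points, hence on which side of the geodesic $[z,w]$ the new point branches off and at what depth, relative to the reference $y$. Letting $y$ range over $M'$ then triangulates $p$, and one reads off a recursion $h(n)\le h(n-1)+C$ for a universal constant $C$, giving a bound linear in $n$.

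The heart of the argument, and the step I expect to be the main obstacle, is showing that these locally determined branch data are \emph{globally consistent}: that there is a single point $p\in\tau'$ — or at worst a region of diameter $O(k)$, which is all we need — simultaneously compatible with the branching prescribed by every four-point subset containing $u$. This is precisely where the five-point axiom is indispensable. Applied to sets $\{u,y,y',z,w\}$, it plays the role of a transitivity (or ``associativity of medians'') statement, forcing the partial placements of $u$ coming from different references $y,y'$ to agree up to an additive $O(k)$ that is independent of $n$; without it the triangulated data could be mutually contradictory and the per-step error could grow with $n$. The delicate bookkeeping is to convert these qualitative compatibilities into the quantitative estimate $h(n)\le h(n-1)+C$, keeping every comparison between a given crossratio and its tree value within $O(k)$ of the value coming from a genuine five-point tree. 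An alternative route would be to first manufacture from the crossratio a metric on $M$ satisfying Gromov's four-point inequality with constant $O(k)$ and then invoke the classical tree-approximation theorem for finite hyperbolic metric spaces; but recovering such a metric requires fixing the gauge freedom exposed in the first paragraph, and verifying that a single consistent gauge exists is essentially the consistency problem above, so I would not expect it to be genuinely easier.
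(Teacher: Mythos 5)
First, a point of reference: the paper does not prove this statement at all --- it is quoted as Theorem~2.1 of \cite{Bowditch} and the authors explicitly refer the reader there for the proof. So there is no in-paper argument to compare against; the relevant benchmark is Bowditch's own proof, which, like yours, proceeds by induction on the cardinality of $M$, adjoining one point at a time and using the four- and five-point axioms to control the placement. Your overall strategy, the gauge observation about pendant-edge lengths, and the base cases $n\le 5$ are all correct and in the spirit of the cited argument.

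The genuine gap is that the step you yourself identify as ``the heart of the argument'' is described but never carried out. You assert that the five-point axiom forces the placements of $u$ obtained from different reference points $y,y'$ to agree up to an additive $O(k)$ \emph{independent of $n$}, but this is exactly what needs a proof, and the difficulty is compounded by an issue your sketch glosses over: the comparisons are made inside the tree $\tau'$, whose crossratios agree with the true ones only up to $k\,h(n-1)$. The five-point axiom constrains the \emph{true} crossratios $(uy\mid zw)$ and $(uy'\mid zw)$ with constant $k$, but the candidate positions of $u$ are read off from the \emph{distorted} geometry of $\tau'$; one must show that this distortion enters the new quantities $(uy\mid zw)_\tau$ only additively (and with a controlled count of error terms) before any recursion for $h(n)$ can be written down. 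Without that bookkeeping, the claimed recursion $h(n)\le h(n-1)+C$ --- and hence the linear bound --- is an unsupported assertion; it is not even clear a priori that the error does not get multiplied at each stage. To be fair, the theorem only asks for \emph{some} $h(n)$ depending on $n$, so a recursion of the form $h(n)\le C\,h(n-1)+C$ would still suffice, and the linearity claim is an overreach rather than a fatal error. But as written the proposal is a plan for a proof rather than a proof: the single lemma on which everything rests (global coherence of the locally determined branch data, quantitatively) is deferred with the phrase ``the step I expect to be the main obstacle,'' which is precisely the step that Bowditch's ``elementary but tedious'' induction actually executes.
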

We refer the reader to \cite{Bowditch} for a proof and to Example \ref{example:crossratiocantor} below for an example.
\subsubsection{The crossratio topology}
% We will now explain how a hyperbolic crossratio on a set $M$ induces a natural topology on $M$. For more details, we refer the reader to Section $2$ of \cite{Bowditch}.
Assume that $(..\mid ..)$ is a hyperbolic crossratio on a set $M$ and fix two distinct elements $a,b\in M$. For any $r\geq 0$ and any $x\in M\backslash \{a,b\}$, we define
\[ D_{ab}(x,r)=\{x\} \cup \{y\in M\backslash \{a,b,x\} \mid (ab\mid xy) \geq r \} .\] It turns out that the sets $\{D_{ab}(x,r)\mid r\geq 0\}$ form a base of neighbourhoods of $x$, relative to a certain metrizable topology on $M\backslash \{a,b\}$. To see where this metric comes from, remark that $D_{ab}(x,r)\subset D_{ab}(x,\tilde{r})$ for $\tilde{r}<r$. With the idea of a visual metric in the back of our minds, we could intuitively try to define a distance between distinct points $x,y\in M\backslash \{a,b\}$ as $\lambda^{-(ab\mid xy)}$ where $\lambda$ is a certain positive real number. It turns out that this expression does not in general define a metric. When $\lambda>1$ is sufficiently close to $1$, it {\em does} however define a quasi-ultrametric. One can obtain a metric by modifying the quasi-ultrametric as in \cite{Gromovhyperbolicgroups}.

One can show that for $a,b,c\in M$ distinct, the above defined topologies on $M\backslash \{a,b\}$ and $M\backslash \{a,c\}$ coincide. Indeed, it follows from Definition \ref{defn:hypcro} that, up to a fixed constant, $(ac\mid xy)\leq (ab\mid xy)+p$ and $(ab\mid xy)\leq (ac\mid xy)+q$ where $p=(ac\mid bx)$ and $q=(ab\mid cx)$. Consequently, $D_{ac}(x,r) \subseteq D_{ab}(x,r-p)$ and $D_{ab}(x,r)\subseteq D_{ac}(x,r-q)$.

Applying this observation twice, we get that for $a\neq b \in M$ and $c\neq d \in M$, the topologies on $M\backslash \{a,b\}$ and $M\backslash \{c,d\}$ coincide. Consequently, we obtain a well defined metrizable topology on $M$, which we call the {\bf crossratio topology}.
\begin{defn}
A hyperbolic crossratio $(..\mid..)$ on a topological space $(M,\mathcal{S})$ is {\em compatible with} $\mathcal{S}$ if the crossratio topology and $\mathcal{S}$ coincide.
\end{defn}
\begin{remark}
If $(M,\mathcal{S})$ is equipped with a compatible hyperbolic crossratio, then $\mathcal{S}$ is equivalent to the crossratio topology and so in particular, it is a metrizable topology.
\end{remark}
\subsubsection{Other properties of crossratios}
For the remainder of this text, we will be interested in {\em perfect, hyperbolic path crossratios}. We introduce these concepts here.

\begin{defn}
A hyperbolic crossratio is called {\bf perfect} if for any distinct $x,y,z\in M$, we can find a sequence $(x_i)_{i \in \N}$ in $M\backslash \{x,y,z\}$ such that $(yz\mid xx_i)\to \infty$. Said differently, a crossratio is perfect if the crossratio topology is perfect.
\end{defn}
\begin{defn}
A hyperbolic crossratio is a {\bf path crossratio} if there exists $p\in \R^+$ such that given any distinct $x,y,z,w\in M$ there is a finite sequence of points, $y=u_0,u_1,\ldots ,u_n=w$ of $M$ such that $(xu_i\mid zu_j)\aeq_p j-i$ for all $i,j\in \{1,2,\ldots, n\}$ with $i<j$.
\end{defn}
By enlarging the hyperbolicity constant or by enlarging $p$, we can assume that $p$ is equal to the constant of hyperbolicity.

The following lemma (see also Lemma $2.8$ \cite{Bowditch}) will be needed in Section \ref{sc:Bowditch+}. We add more details to the proof given by Bowditch.
\begin{lemma}
Let $(..\mid..)$ be a perfect $k$-hyperbolic path crossratio on a set $M$. There exists a constant $p$ such that for all distinct $a,b,c\in M$,
% ``For any two distinct points $z_1,z_2$ and any $C\geq 0$, there exist $d,f\in M$ such that $(dz_1\mid fz_2)\geq C$ \footnote{This latter condition is satisfied in our specific annulus construction, but is not required by Bowditch in his lemma 2.8}.''\\
there is a bi-infinite sequence, $(x_i)_{i\in \Z}$ in $M\backslash \{a,b\}$ such that $\forall i<j\in \Z:\ (bx_i\mid ax_j)\aeq_p j-i$ and $x_0=c$.
\label{lm:infinitegeodesic}
\end{lemma}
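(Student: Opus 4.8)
The plan is to realize $(x_i)_{i \in \Z}$ as a discrete bi-infinite quasi-geodesic ``from $b$ to $a$ through $c$''. Thinking of the tree picture behind Theorem~\ref{theorem:approxtree}, the $x_i$ are boundary points whose defining branches leave the line $(a,b)$ at points marching monotonically from $b$ (as $i\to-\infty$) to $a$ (as $i\to+\infty$), the branch through $c$ sitting at index $0$; a direct tree computation confirms that then $(bx_i\mid ax_j)_\tau = j-i$ for $i<j$. Accordingly I would build two rays emanating from $c$ and glue them. The \emph{path crossratio} hypothesis is exactly what produces finite geodesic segments: applied to four points $b,y,a,w$ it yields $y=u_0,\dots,u_n=w$ with $(bu_i\mid au_j)\aeq_p j-i$, the local model for our estimate with anchors $b$ and $a$. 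The \emph{perfect} hypothesis is what keeps the construction from terminating: it provides, for any already-constructed endpoint, a fresh point of $M\setminus\{a,b\}$ lying strictly ``beyond'' it towards $a$ (respectively $b$), so that the successive segments can be pushed arbitrarily far.

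First I would fix the central segment. Using perfectness, choose $\alpha$ close to $a$ and $\beta$ close to $b$; applying the path crossratio to $(b,c,a,\alpha)$ and to $(b,\beta,a,c)$, and reindexing the second so its last term is $c$, I obtain $x_{-s},\dots,x_0=c,\dots,x_r$ with $(bx_i\mid ax_j)\aeq_p j-i$ separately on $\{i\le 0\}$ and on $\{i\ge 0\}$. For a pair $i<0<j$ straddling the centre I would read off the estimate from the five-point axiom of Definition~\ref{defn:hypcro} applied to $\{a,b,x_i,c,x_j\}$ (equivalently, from Theorem~\ref{theorem:approxtree} on these five points): the known values $(bx_i\mid ac)\aeq_p -i$ and $(bc\mid ax_j)\aeq_p j$, together with the degenerate crossratios forcing the linear arrangement $a,x_j,c,x_i,b$ in the approximating tree, give $(bx_i\mid ax_j)\aeq_{p'} j-i$ with $p'$ depending only on $k$ and $p$. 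I would then extend to the right by repeatedly choosing, via perfectness, a point beyond the current right endpoint $x_R$ and closer to $a$, inserting the path-crossratio geodesic joining $x_R$ to it, and symmetrically to the left; this produces a bi-infinite sequence in $M\setminus\{a,b\}$ with $x_0=c$ and with $(bx_i\mid ax_j)\aeq_p j-i$ valid whenever $x_i,x_j$ lie in a common segment.

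The heart of the matter, and the step I expect to be the main obstacle, is to upgrade these segment-wise estimates to a \emph{single} estimate $(bx_i\mid ax_j)\aeq_p j-i$ for all $i<j$, with one constant independent of how many segments separate $i$ from $j$. The natural tool is additivity of the crossratio along the sequence, $(bx_i\mid ax_j)\aeq_k (bx_i\mid ax_m)+(bx_m\mid ax_j)$, furnished by the five-point axiom (or by tree-approximation on the five points $\{a,b,x_i,x_m,x_j\}$, whose error $kh(5)$ is \emph{universal}). The danger is that naive telescoping through successive junctions accumulates an error proportional to their number, so the constant degrades without bound; avoiding this is precisely a stability, or local-to-global, phenomenon for quasi-geodesics in the hyperbolic setting. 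Concretely I would show, using Theorem~\ref{theorem:approxtree} on bounded sub-configurations together with the monotonicity the construction builds in, that the branch points stay uniformly close to the line $(a,b)$ and advance along it at a definite rate, so that for any $i<j$ a single well-chosen intermediate point realizes the additive splitting and yields the estimate with a universal constant $p'=p'(k,p)$. Taking this $p'$ as the constant $p$ of the statement then completes the proof.
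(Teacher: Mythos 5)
Your overall architecture --- a skeleton of points tending to $a$ on one side and $b$ on the other, with path-crossratio segments interpolated between consecutive skeleton points, and additivity of the crossratio used to splice the local estimates --- is the same as the paper's. The gap is exactly at the step you yourself flag as the main obstacle, and the mechanism you propose to close it does not work as stated. A \emph{single} well-chosen intermediate point $x_m$ splits $(bx_i\mid ax_j)$ into $(bx_i\mid ax_m)+(bx_m\mid ax_j)$ at the cost of one application of the five-point axiom, but each of the two resulting terms may still straddle arbitrarily many junctions, so you have gained nothing: you are back to needing a uniform estimate for crossratios between far-apart junction points, and obtaining that by telescoping accumulates an error proportional to the number of junctions. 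Appealing to ``stability of quasi-geodesics'' is not available here either --- no local-to-global principle for path crossratios has been established at this point in the paper, and proving one would be at least as hard as the lemma itself.

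What the paper does instead is to control the skeleton \emph{globally before} interpolating, by anchoring all positions to the fixed point $y_0=c$ (the same trick you use, but only for your central straddling pair). One first takes $y_i\to a$, $y_{-i}\to b$ with $y_0=c$, passes to a subsequence so that the anchored positions $(by_0\mid ay_i)$ (resp.\ $(ay_0\mid by_{-i})$) are strictly increasing by more than $k$ at each step, and \emph{defines} the integer labels $k(i)=\pm\lceil(ay_i\mid y_0 b)\rceil$ from these crossratios. Then for any $j<i$ the estimate $(ay_i\mid y_jb)\aeq k(i)-k(j)$ follows from a \emph{single} application of the five-point axiom to $\{a,b,y_0,y_j,y_i\}$ --- no telescoping, hence a constant independent of $i-j$. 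With the skeleton controlled, the general pair $n<m$ is handled by a two-sided sandwich through the two nearest junctions $x_{k(j)},x_{k(i)}$, using only a bounded number (two or three) of triangle-inequality splittings, each term being either a within-segment estimate or a skeleton estimate. If you redefine your iterative construction so that each new junction's position is measured by its crossratio against the fixed anchor $c$ (rather than merely being ``beyond the current endpoint''), your argument becomes the paper's.
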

%!!!!!! Lemma is needed for defining embedding M--> delta Q !!!!!
\begin{proof}
Since $M$ is perfect (and the crossratio topology is first countable), we can find a bi-infinite sequence $(y_i)_i$ in $M\backslash \{a,b\}$ with $y_i\to a$, $y_{-i}\to b$ and $y_0=c$. Passing to a subsequence but retaining $y_0$, we can assume that for all $0\leq j<i$, we have $(by_0\mid ay_j)\avsm_k (by_0\mid ay_i)$ and $(ay_0\mid by_{-j})\avsm_k (ay_0\mid by_{-i})$. Using Theorem \ref{theorem:approxtree}, it is clear that $0\avsm_k (by_j\mid ay_i)$ for $j<i$. %Intuitively, we can now use the path property to interpolate between the $y_i$ to conclude. Let us elaborate a bit further on this.

For each $i \in \ZZ$, we define $k(i) = 0$ if $i=0$; $k(i) =-\left\lceil (a y_0 | y_i b) \right\rceil$ if $i < 0$ and $k(i) =   \left\lceil (a y_i | y_0 b) \right\rceil$ if $i > 0$ (here, $\left\lceil z \right\rceil$ denotes the smallest integer $\geq z$). Thus we can assume that $k$ is an increasing injection $\ZZ \to \ZZ$ such that for $j < i$ we have that $(a y_i | y_j b) \simeq k(i) - k(j)$. We now define a new sequence $(x_j)_{j\in \ZZ}$ by interpolating between the $y_i$ using the path property as follows : for each $i \in \ZZ$ define $y_i = x_{k(i)}, x_{k(i)+1}, x_{k(i)+2}, \ldots, x_{k(i+1)} = y_{i+1}$ so that $(a x_m | x_n b) \simeq m-n$ for $k(i) \leq n < m \leq k(i+1)$. We claim that the sequence is such that $(a x_m | x_n b) \simeq m - n$ for any integers $m > n$. If there is some $i$ such that $k(i) \leq n < m \leq k(i+1)$, then this follows from the definition of the $x_j$. Otherwise there are $j \leq i$ such that $k(j-1) < n < k(j)$ and that $k(i) < m < k(i+1)$. Then using hyperbolicity of the cross-ratio (in the form of the triangle inequality $(cd | e f) \asm (cd | y f) + (c y | e f)$), we have 
		\[ (a x_m | x_n b) \asm (a x_m | x_{k(i)}b) + (a x_{k(i)} | x_{k(j)} b) + (a x_{k(j)} | x_n b) \simeq m-n .\]
		On the other hand we have
		\[ (a x_m | x_n b) \succeq (a x_{k(i+1)} | x_{k(j-1)} b) - (a x_{k(i+1)} | x_m b) - (a x_n | x_{k(j-1)} b) \simeq m-n.\]
		Thus we indeed have $(a x_m | x_n b) \simeq m-n$ as desired.
\end{proof}
\subsection{Quasimetrics \label{subsc:Quasimetrics}}
In this paragraph, we introduce quasimetrics and elaborate on the interplay between  quasimetrics and their associated crossratio.
\begin{defn}
A k-quasimetric $\rho$, on a set $Q$ is a function $\rho:Q^2\rightarrow [0,\infty)$ satisfying $\rho(x,x)=0, \rho(x,y)=\rho(y,x)$ and $\rho(x,y)\leq \rho(x,z)+\rho(z,y)+k$ for all $x,y,z\in Q$.
A {\bf quasimetric} is a $k$-quasimetric for some $k$. We refer to $k$ as the quasimetric constant.
\end{defn}
\begin{remark}
 Note that for $k=0$, $\rho$ is simply a pseudometric.
\end{remark}
As we relax the triangle inequality by an additive constant, we lose the possibility of associating a topology to a quasimetric: the collection of balls in a quasimetric space does no longer form a base for a topology.

On the other hand, 
% when the quasi-metric is a {\em path quasimetric},
we can still define {\em hyperbolicity} and the {\em hyperbolic boundary}. To this end, recall that in Equation \ref{eq:treecrossratio}, we used the metric on a tree to define a crossratio on that tree. We can do the same thing for any quasimetric space $(Q,\rho)$: given $x,y,z,w\in Q$, we set
\[\begin{array}{ccl}
(xy\mid zw)_\rho &= &\frac{1}{2}[\max\{\rho(x,y)+\rho(z,w),\rho(x,w)+\rho(y,z), \rho(y,w)+\rho(x,z)\}\\
& & - (\rho(x,y)+\rho(z,w))].
\end{array}\]
Note that $(xy\mid zw)_\rho=(yx\mid zw)_\rho=(zw\mid xy)_\rho$ and so 	$(..\mid..)_\rho$ is a well defined crossratio. 
\begin{defn}
We say that the quasimetric space $(Q,\rho)$ is {\bf hyperbolic} if the associated crossratio $(..\mid..)_\rho$ is a hyperbolic crossratio. We refer to the constant involved as the {\em hyperbolicity constant}.
\end{defn}
The classical ``four-point'' characterization of hyperbolicity for metric spaces still makes sense for quasimetrics. It can be shown that this definition is equivalent to the definition given above. 

Closely following Bowditch's exposition, we will now introduce the {\bf hyperbolic boundary}.
\begin{defn}
A {\bf $k$-geodesic segment} (connecting $x_0$ to $x_n$) is a finite sequence of points $x_0,x_1,\ldots ,x_n$ with $\rho(x_i,x_j)\aeq_k \lvert i-j\rvert$ for all $i,j\in \{1,2,\ldots ,n\}$. Similarly, one defines the notions of {\em k-geodesic ray} (indexed over $\N$) and {\em bi-infinite k-geodesic} (indexed over $\Z$).

We say that a quasimetric is a {\bf path quasimetric} if there is some $k\geq 0$ such that every pair of points can be connected by a $k$-geodesic segment. By enlarging $k$ if necessary, we can assume that $k$ equals the quasimetric constant. We simply write {\bf geodesic} instead of {\em $k$-geodesic} unless there is a chance for ambiguity.
\end{defn}
Let $(Q,\rho)$ be a hyperbolic path quasimetric space and fix a basepoint $a\in Q$. Given two geodesic rays emanating from $a$, we say that they are {\em parallel} if they remain at bounded distance from each other. We define $\partial Q$ as the set of parallel classes of geodesic rays that emanate from $a$.

Given $x\in \partial Q$, let $(x_i)_i$ be a geodesic ray in the class of $x$. Fix any $r>0$. For any given natural number $n$, we define $D(n)$ as the set of all $y\in Q\cup \partial Q$ such that some geodesic connecting $a$ to $y$ (i.e. a ray in the class of $y$ if $y\in \partial Q$) meets $N_\rho(x_n,r):=\{z\in Q\mid \rho(x_n,z)<r\}$. The collection $\{D(n)\mid n\in \N\}$ defines a base of neighbourhoods of $x\in Q\cup \partial Q$. Setting the topology on $Q$ to be discrete, one obtains a well defined topology on $Q\cup \partial Q$. As it turns out, this topology is actually induced by a metric and is thus Hausdorff.

There is a natural way to define a crossratio on $\partial Q$. Given $(x_1,x_2,x_3,x_4)\in (\partial Q)^{(4)}$, we can find pairwise disjoint open sets $O_1,O_2,O_3,O_4\subset Q\cup \partial Q$ with $x_i\in O_i$ and such that if $y_i,z_i\in Q\cap O_i$, then $(y_1,y_2,y_3,y_4)_\rho \aeq (z_1,z_2,z_3,z_4)_\rho$. Let us define $(O_1,O_2,O_3,O_4)$ as the supremum of $(y_1,y_2,y_3,y_4)_\rho$ over the $y_i\in O_i\cap Q$. Finally, define $(x_1,x_2,x_3,x_4)_\rho$ to be the limit of $(O_1,O_2,O_3,O_4)$ as each set $O_i$ shrinks to the point $x_i$. It is easy to check that this equips $\partial Q$ with a hyperbolic crossratio and that the topology on $\partial Q$ coincides with the crossratio topology.

In fact, by holding fixed any of the points in the above construction, one can also define $(ab\mid cx)_\rho, (ab\mid xy)_\rho$ and $(ax\mid yz)_\rho$ for $a,b,c\in Q, x,y,z\in \partial Q$. This equips $Q\cup \partial Q$ with a hyperbolic crossratio.

In this paragraph, we have seen that a hyperbolic quasimetric $\rho$ on a set $Q$ gives rise to a hyperbolic crossratio on $Q$.
%Moreover, in the case of a path quasimetric, one can define the boundary $\partial Q$, equip it with a topology and extend the crossration to $Q\cup \partial Q$. Moreover, the topology on $\partial Q$ coincides with the crossratio topology.
The next paragraph describes some kind of a converse, namely that a hyperbolic crossratio on a set $M$ induces a hyperbolic quasimetric on {\bf the set of distinct triples}
\[ M^{(3)}=\{(x,y,z)\in M \mid x\neq y, x\neq z, y\neq z \} \]
 {\bf of $M$}.
\subsection{Crossratios induce quasimetrics}
Let us start with a hyperbolic crossratio $(..\mid..)$ on a set $M$ such that $(xy\mid xy)=(xy\mid xz)=0$ for all distinct $x,y,z\in M$. Given any two distinct triples $X=(x_1,x_2,x_3), Y=(y_1,y_2,y_3)$ in $M^{(3)}$, we define $\rho(X,Y)\in [0,\infty)$ as
\begin{equation}
\rho(X,Y)=\max\{(x_ix_j\mid y_my_n)\mid i,j,m,n \in \{1,2,3\}, i\neq j, m\neq n\}.
\label{equation:quasimetric}
\end{equation}

There is another, more intuitive way of defining $\rho$. For any two triples $X=(x_1,x_2,x_3), Y=(y_1,y_2,y_3)\in M^{(3)}$, take an approximating tree $(\tau,d_\tau)$ for the finite set
$\{x_1,x_2,x_3,y_1,y_2,y_3\}$ as in Theorem \ref{theorem:approxtree}. Define $x$ and $y$ to be the medians in $\tau$ of the triples $(x_1,x_2,x_3)$ and $(y_1,y_2,y_3)$. One can show the following lemma.
\begin{lemma}[Lemma 4.1 in Bowditch]
There exists a constant $k\in \R^+$ such that for all triples $X,Y\in M^{(3)}: \rho(X,Y)\aeq_k d_\tau(x,y)$.
\end{lemma}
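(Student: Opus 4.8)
The plan is to reduce the statement to a clean identity inside the approximating tree and then transfer it back using Theorem~\ref{theorem:approxtree}. Fix $X=(x_1,x_2,x_3)$ and $Y=(y_1,y_2,y_3)$ in $M^{(3)}$ and apply Theorem~\ref{theorem:approxtree} to the finite set $F=\{x_1,x_2,x_3,y_1,y_2,y_3\}$, of cardinality at most $6$. This embeds $F$ in a metric tree $\tau$ so that
\[ \lvert (ab\mid cd)-(ab\mid cd)_\tau\rvert \le C \]
for all four-tuples drawn from $F$, where $C=k\,\max_{n\le 6}h(n)$ depends only on the hyperbolicity constant $k$. Since $\rho(X,Y)$ is by definition the maximum of the numbers $(x_ix_j\mid y_my_n)$, and since $a_i\aeq_C b_i$ for all $i$ forces $\max_i a_i\aeq_C\max_i b_i$, it suffices to establish the purely tree-theoretic identity
\[ \max_{i\neq j,\ m\neq n}(x_ix_j\mid y_my_n)_\tau \;=\; d_\tau(x,y), \]
where $x,y$ are the medians of the two triples. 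Here I use the interpretation recorded after Equation~\ref{eq:treecrossratio} that $(x_ix_j\mid y_my_n)_\tau$ is exactly the distance in $\tau$ between the geodesic segments $[x_i,x_j]$ and $[y_m,y_n]$.

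For this tree identity I would argue via the bridge between the two medians. Recall the median $x$ lies on every segment $[x_i,x_j]$ and $y$ lies on every $[y_m,y_n]$; the convex hulls $\Delta_X,\Delta_Y$ of the two triples are tripods centred at $x$ and $y$. The upper bound is immediate: since $x\in[x_i,x_j]$ and $y\in[y_m,y_n]$, one has $d_\tau([x_i,x_j],[y_m,y_n])\le d_\tau(x,y)$ for all choices. For the lower bound, let $x'$ be the point of $\Delta_X$ nearest $\Delta_Y$ and $y'$ the point of $\Delta_Y$ nearest $\Delta_X$, so that $[x',y']$ is the bridge and
\[ d_\tau(x,y)=d_\tau(x,x')+d_\tau(x',y')+d_\tau(y',y). \]
The point $x'$ sits on the leg of $\Delta_X$ towards one vertex, say $x_\alpha$ (or $x'=x$); taking $\{i,j\}$ to be the complementary pair makes $x$ the nearest point of $[x_i,x_j]$ to $x'$, hence to all of $\Delta_Y$. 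Choosing $\{m,n\}$ complementary to the analogous vertex $y_\beta$ on the other side, the standard formula for the distance between convex subtrees read off along their bridge yields $d_\tau([x_i,x_j],[y_m,y_n])=d_\tau(x,x')+d_\tau(x',y')+d_\tau(y',y)=d_\tau(x,y)$, so this pair realizes the maximum.

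Combining the two steps gives $\rho(X,Y)\aeq_C \max(x_ix_j\mid y_my_n)_\tau=d_\tau(x,y)$, which is the claim. Two points need care, though neither is deep. First, the triples are distinct as triples but individual entries may coincide across them (e.g.\ $x_i=y_m$); the corresponding terms of the maximum then involve a degenerate four-tuple, but by the conventions $(xy\mid xy)=(xy\mid xz)=0$ these vanish, and in $\tau$ two geodesic segments sharing an endpoint are likewise at distance $0$, so both the approximation bound and the identity survive unchanged. Second, one must verify that the bridge computation remains valid when $\Delta_X$ and $\Delta_Y$ meet, in which case the bridge degenerates to a single point and $d_\tau(x',y')=0$. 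The main obstacle is precisely this tree computation: obtaining the lower bound as an \emph{exact} equality rather than a mere comparison requires the careful selection of the two complementary index pairs dictated by where the bridge leaves each tripod, together with the fact that in a tree the distance between convex subtrees is attained along their bridge.
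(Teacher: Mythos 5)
The paper does not actually prove this lemma --- it states it as Lemma 4.1 of \cite{Bowditch} without argument --- so there is no in-paper proof to compare against, and I assess your argument on its own terms. Your reduction is the right one, and most of it is correct: applying Theorem~\ref{theorem:approxtree} to the at most six points of $X\cup Y$ transfers each $(x_ix_j\mid y_my_n)$ to $(x_ix_j\mid y_my_n)_\tau$ up to a constant depending only on the hyperbolicity constant; maxima of uniformly close families are uniformly close; degenerate tuples contribute $0$ on both sides by the stated conventions; and the upper bound $\max (x_ix_j\mid y_my_n)_\tau\le d_\tau(x,y)$ is immediate from $x\in[x_i,x_j]$ and $y\in[y_m,y_n]$.

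The genuine gap is in the lower bound when $\Delta_X\cap\Delta_Y\neq\emptyset$. Your dismissal of that case rests on a false premise: the intersection of the two tripods can be a nondegenerate subtree (for instance a long common segment), so the bridge does \emph{not} degenerate to a single point, ``the point of $\Delta_X$ nearest $\Delta_Y$'' is not well defined, and the decomposition $d_\tau(x,y)=d(x,x')+d(x',y')+d(y',y)$ is unavailable --- yet this is exactly the case where the medians can still be far apart while many of the six segments overlap and give crossratio $0$, so it cannot be waved away. The repair is to select the index pairs by the initial direction of $[x,y]$ rather than by a bridge: let $\alpha$ be the (at most one) index such that the leg $[x,x_\alpha]$ shares a nondegenerate initial segment with $[x,y]$, and take $\{i,j\}$ complementary to $\alpha$ (any pair if no such $\alpha$ exists); choose $\{m,n\}$ symmetrically at $y$. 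Then each of the legs $i,j$ meets $[x,y]$ only in $\{x\}$, so $x\in[p,y]$ and hence $d(p,y)=d(p,x)+d_\tau(x,y)$ for every $p\in[x_i,x_j]$, and likewise $d(q,x)=d(q,y)+d_\tau(x,y)$ for every $q\in[y_m,y_n]$. Summing the two triangle inequalities $d(p,q)\ge d(p,y)-d(y,q)$ and $d(p,q)\ge d(q,x)-d(x,p)$ gives $2d(p,q)\ge 2d_\tau(x,y)$, which yields the lower bound uniformly in $p$ and $q$ and handles the disjoint and overlapping configurations simultaneously (in the disjoint case this choice agrees with your bridge-based one). With that substitution your proof is complete.
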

\begin{example}
Let $M$ be the Cantor set, viewed as the hyperbolic boundary of the free group $F_2=<e_1,e_2>$. Given $(x,y,z,w)\in M^{(4)}$, we define $(xy\mid zw)$ as the distance between the bi-infinite geodesics $[x,y]$ and $[z,w]$ in the Cayley graph Cay$(F_2,\{e_1,e_2\})$ (here, every edge is assigned length $1$). One checks that this definition gives a hyperbolic crossratio on $M$.

In order to check Theorem \ref{theorem:approxtree} in this case, let $F$ be any finite set of points of $M$. For every two distinct elements $x,y\in F$, we denote the bi-infinite geodesic in Cay$(F_2,\{e_1,e_2\})$ connecting these two points by $\gamma_{xy}$. Given two such bi-infinite geodesics $\gamma_{xy}, \gamma_{zw}$ with $(x,y,z,w)\in M^{(4)}$, there are two uniquely defined points $c_{xy,zw}\in \gamma_{xy}$ and $c_{zw,xy}\in \gamma_{zw}$  minimizing the distance between $\gamma_{xy}$ and $\gamma_{zw}$ and of minimal length (i.e. we choose the points as close to $1\in F_2$ as possible). Let $T$ be a finite subtree of $F_2$ containing $1\in F_2$ and all of these points $c_{ab,cd}$ with $a,b,c,d\in F$ all distinct. For every $x\in F$, let $t_x$ be the point in $T$ which lies furthest on the geodesic ray connecting $1$ to $x$ in the Cayley graph of $F_2$. Identifying the points $x\in F$ with their associated $t_x\in T$, one checks that $T$ satisfies the conditions of Theorem \ref{theorem:approxtree}.

Take points $X=(x_1,x_2,x_3)$ and $Y=(y_1,y_2,y_3)$ in $M^{(3)}$. An approximating tree for points $x_1,x_2,x_3,y_1,y_2,y_3\in M$ is in particular a subtree of the Cayley graph of $F_2$. This tree contains the intersection point $x$ of the bi-infinite geodesics $[x_1x_2], [x_2x_3]$ and $[x_1x_3]$. Similarly, it contains the intersection point $y$ of $[y_1y_2], [y_2,y_3]$ and $[y_1y_3]$. In this case $\rho(X,Y)=d_{F_2}(x,y)$.
\label{example:crossratiocantor}
\end{example}
We can now formulate the following striking fact.
\begin{theorem}[Proposition $4.7$ in \cite{Bowditch}]
Suppose that $M$ is a perfect compactum with compatible hyperbolic path crossratio $(..\mid..)$. Then $\rho$ as defined in Equation (\ref{equation:quasimetric}) is a hyperbolic path quasimetric on the space of distinct triples $Q$ of $M$ in such a way that $M$ can be naturally identified by a homeomorphism with the hyperbolic boundary $\partial Q$. Moreover, the crossratios $(..\mid..)$ and $(..\mid..)_\rho$ on $\partial Q$ differ by at most an additive constant.
\label{theorem:crossratiotoquasimetric}
\end{theorem}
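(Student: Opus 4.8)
The plan is to reduce every assertion about $\rho$ to a computation inside an approximating tree, where the crossratio is $0$-hyperbolic and all quantities are read off exactly from the metric, and then to transfer the conclusion back to $Q$ with a bounded additive error. The recurring tool is the preceding lemma of Bowditch (Lemma~4.1): if $X=(x_1,x_2,x_3)$ and $Y=(y_1,y_2,y_3)$ are triples and $\tau$ is an approximating tree for the six points supplied by Theorem~\ref{theorem:approxtree}, with $x,y$ the medians of the two triples in $\tau$, then $\rho(X,Y)\aeq_k d_\tau(x,y)$. I would first dispatch the quasimetric axioms: symmetry is immediate from the symmetry of the crossratio, and $\rho(X,X)=0$ follows from the normalization $(uv\mid uv)=(uv\mid uw)=0$, since any two distinct $2$-element subsets of $\{x_1,x_2,x_3\}$ share a coordinate. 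For the quasi-triangle inequality $\rho(X,Z)\asm_k\rho(X,Y)+\rho(Y,Z)$ I would take a single approximating tree $\tau$ for the nine points of $X,Y,Z$ with medians $x,y,z$; the genuine triangle inequality $d_\tau(x,z)\leq d_\tau(x,y)+d_\tau(y,z)$ holds exactly in $\tau$, and Lemma~4.1 transfers it to $\rho$ up to a constant depending only on $k$ and $h(9)$. Hyperbolicity of the associated crossratio $(..\mid..)_\rho$ is verified identically: given four triples I take an approximating tree for their (at most twelve) coordinates, identify the four medians, note that the four-point hyperbolicity inequality for the four medians holds exactly in the tree, and invoke Lemma~4.1 to obtain a uniform hyperbolicity constant.

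The path-quasimetric property is the step I expect to be the main obstacle, and here the \emph{path} hypothesis on the crossratio is essential. The geometric picture is that, in an approximating tree, the median of $(a,b,u)$ is the nearest-point projection of $u$ onto the segment $[a,b]$, so one slides a median along a geodesic by fixing two coordinates and moving the third. Concretely, given distinct $a,b$ and endpoints $y,w$, the path property produces a finite sequence $y=u_0,u_1,\dots,u_n=w$ with $(a u_i\mid b u_j)\aeq_p j-i$ for $i<j$; computing in an approximating tree shows $(a u_i\mid b u_j)_\tau=d_\tau(m_i,m_j)$, where $m_i$ is the projection of $u_i$ onto $[a,b]$, so the triples $Z_i=(a,b,u_i)$ satisfy $\rho(Z_i,Z_j)\aeq|i-j|$ by Lemma~4.1. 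To connect arbitrary $X$ and $Y$ one changes coordinates one at a time, performing such a slide in each of the three coordinates. The delicate point is not any single inequality but the bookkeeping: one must keep all triples genuinely distinct, control the crossratios pairing the moving coordinate against the fixed anchors, and, above all, splice the successive sliding segments so that the concatenation has \emph{no backtracking} and is therefore a $k$-geodesic and not merely a path; ruling out folding is where the real work lies, and it is again handled by passing to one large approximating tree and using hyperbolicity.

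Finally I would identify $M$ with $\partial Q$. For $x\in M$ I would build a geodesic ray in $Q$ whose median escapes toward $x$. Using Lemma~\ref{lm:infinitegeodesic} (valid since the crossratio is perfect and a path crossratio), fix $b,c\in M\setminus\{x\}$ and a bi-infinite sequence $(x_i)$ with $x_i\to x$ and prescribed crossratio spacing; the triples $W_i=(b,x_i,x_{i+1})$ then have two coordinates approaching $x$, so their medians march out toward $x$ along a geodesic, and $\{W_i\}_{i\geq 0}$ is a $k$-geodesic ray whose parallel class I declare to be $\phi(x)$. Independence of the choice of $b,c$ and of the sequence, and the fact that $\phi$ is a continuous bijection with continuous inverse, I would settle by comparing, via approximating trees, the crossratio topology on $M$ with the boundary topology on $\partial Q$ defined earlier. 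The same approximating-tree comparison yields the last claim: for distinct $x_1,x_2,x_3,x_4\in M$, both $(x_1x_2\mid x_3x_4)$ and $(\phi(x_1)\phi(x_2)\mid\phi(x_3)\phi(x_4))_\rho$ equal, up to bounded error, the tree crossratio of the corresponding four points, so the two crossratios on $\partial Q$ agree up to an additive constant.
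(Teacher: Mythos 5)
Your proposal cannot be checked against a proof in this paper, because the paper does not prove Theorem~\ref{theorem:crossratiotoquasimetric}: it is imported verbatim as Proposition~4.7 of \cite{Bowditch}, with only the construction of the embedding $f:M\to\partial Q$ recalled afterwards. Measured against Bowditch's actual argument, your reconstruction follows essentially the same route: the quasimetric axioms and the hyperbolicity of $(..\mid..)_\rho$ are read off from approximating trees via the median comparison $\rho(X,Y)\aeq_k d_\tau(x,y)$ of Lemma~4.1, the identification of $M$ with $\partial Q$ uses rays built from Lemma~\ref{lm:infinitegeodesic} (your triples $(b,x_i,x_{i+1})$ are a harmless variant of the paper's $(b,a,x_i)$, since both families of medians track the same points on an approximating geodesic and hence define parallel rays), and the comparison of $(..\mid..)$ with $(..\mid..)_\rho$ is again a tree computation, consistent with Lemma~\ref{lemma:centre}. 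The one place where you correctly locate the crux but do not actually discharge it is the path-quasimetric property: changing the three coordinates one at a time produces a concatenation of three slides whose medians may fold back, and asserting that this ``is handled by passing to one large approximating tree'' is a statement of intent rather than an argument --- one must either excise the backtracking subsegments and reindex, or order the coordinate changes so that consecutive medians move monotonically along a tree geodesic, which is precisely the bookkeeping Bowditch carries out. Two small additional points: your hyperbolicity check should mention the five-point axiom as well as the four-point one (fifteen coordinates, same tree argument), and the triangle inequality via a nine-point tree needs the easy remark that the medians computed there agree, up to a constant depending on $h(9)$, with those from the six-point trees of Lemma~4.1.
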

Bowditch's embedding $f:M\rightarrow \partial Q$ is defined as follows. Given $a\in M$, choose any $b\in M\backslash \{a\}$ and consider a sequence as in Lemma \ref{lm:infinitegeodesic}, i.e. a sequence $(x_i)_i$ such that $(bx_i\mid ax_j)\aeq j-i$ for all integers $i<j$. Set $X_i=(b,a,x_i)$. By the definition of $\rho$, one sees easily that $\rho(X_i,X_j)\aeq \lvert i-j \rvert$, i.e. that $(X_i)_i$ is a $k$-geodesic ray in $(Q,\rho)$. Bowditch defines $f(a)\in \partial Q$ as the class of the ray $(b,a,x_i)_i$. In Lemma $4.5$ of \cite{Bowditch}, it is shown that the choice of $b$ in this definition is irrelevant: more precisely, if $c\in M\backslash \{a,b\}$, then the rays $(b,a,x_i)_i$ and $(c,a,x_i)_i$ are parallel.

For later use, we mention a particular property of this embedding. Given $3$ points $f(x),f(y),f(z)$ in the hyperbolic boundary $\partial Q\cong M$ of $Q$, we say that $c\in Q$ is a center of $f(x),f(y),f(z)$ if $(f(x)f(y)\mid cc)_\rho \simeq (f(x)f(z)\mid cc)_\rho \simeq (f(y)f(z)\mid cc)_\rho \simeq 0$.
\begin{lemma}[cfr Lemma $4.6$ in \cite{Bowditch}]
Given distinct $x,y,z\in M$, the triple $(x,y,z)$ is a centre, in $Q$, of the triple of ideal points $f(x),f(y),f(z)$.
\label{lemma:centre}
\end{lemma}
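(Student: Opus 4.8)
The plan is to verify the three defining estimates of a centre by producing, for each of $f(x),f(y),f(z)$, a $k$-geodesic ray in $(Q,\rho)$ that emanates from the triple $c=(x,y,z)$ itself, and then to show that the rays attached to any two of the three boundary points diverge immediately at $c$. I treat the pair $f(x),f(y)$ in detail, the remaining two pairs being handled by the same argument with the roles of $x,y,z$ permuted. Using the construction of $f$ recalled after Theorem~\ref{theorem:crossratiotoquasimetric}, together with the fact that the choice of base point is irrelevant, I represent $f(x)$ by the ray $X_i=(y,x,x_i)$ coming from Lemma~\ref{lm:infinitegeodesic} with base point $y$ and $x_0=z$, and $f(y)$ by the ray $Y_j=(x,y,y_j)$ with base point $x$ and $y_0=z$. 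Since $\rho(X,Y)$ is a maximum over the crossratios of pairs of coordinates, and $(ab\mid ac)=0$ for distinct points, the quasimetric $\rho$ depends only on the underlying unordered triples; in particular $X_0=(y,x,z)$ and $Y_0=(x,y,z)$ satisfy $\rho(X_0,c)=\rho(Y_0,c)=0$. As $(X_i)$ and $(Y_j)$ are $k$-geodesic rays, this gives $\rho(X_i,c)\aeq i$ and $\rho(Y_j,c)\aeq j$.

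Next I reduce the claim to one distance estimate. By the definition of the crossratio on $Q\cup\partial Q$ obtained by holding the two copies of $c$ fixed, $(f(x)f(y)\mid cc)_\rho$ is the limit of $(X_iY_j\mid cc)_\rho$ as $i,j\to\infty$. Expanding the quasimetric crossratio and using $\rho(c,c)=0$ gives
\[ (X_iY_j\mid cc)_\rho=\tfrac12\Bigl[\max\{\rho(X_i,Y_j),\ \rho(X_i,c)+\rho(Y_j,c)\}-\rho(X_i,Y_j)\Bigr], \]
so that, since $\rho(X_i,c)+\rho(Y_j,c)\aeq i+j$, it suffices to prove $\rho(X_i,Y_j)\aeq i+j$. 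The upper bound $\rho(X_i,Y_j)\asm i+j$ is immediate from the quasimetric triangle inequality routed through $c$, so the entire content lies in the lower bound $i+j\asm\rho(X_i,Y_j)$.

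For the lower bound I compute in an approximating tree. Both triples $X_i=\{x,y,x_i\}$ and $Y_j=\{x,y,y_j\}$ contain $x$ and $y$, so among the nine crossratios whose maximum is $\rho(X_i,Y_j)$ all but $(x x_i\mid y y_j)$ and $(y x_i\mid x y_j)$ share a point and hence vanish exactly. I then apply Theorem~\ref{theorem:approxtree} to the finite set $\{x,y,z,x_i,y_j\}$ and work in a metric tree $\tau$ whose crossratios agree with those of $M$ up to a fixed additive error. The defining relations of the two rays read $(yz\mid x x_i)_\tau\aeq i$ and $(xz\mid y y_j)_\tau\aeq j$; these force the branch toward $x_i$ to leave the segment $[m,x]$ at distance $\aeq i$ from the median $m$ of $x,y,z$, and the branch toward $y_j$ to leave $[m,y]$ at distance $\aeq j$ from $m$. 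Consequently the paths $[x,x_i]$ and $[y,y_j]$ make their closest approaches to $m$ at these two points, which lie in different branches at $m$, so the distance between the paths is $\aeq i+j$, i.e. $(x x_i\mid y y_j)_\tau\aeq i+j$. Transporting back to $M$ yields $\rho(X_i,Y_j)\geq (x x_i\mid y y_j)$ with $i+j\asm (x x_i\mid y y_j)$, which is exactly the needed lower bound and forces $(f(x)f(y)\mid cc)_\rho\aeq 0$. The hard part is precisely this final step: correctly identifying which of the two surviving crossratios carries the distance, and reading the tree configuration off the two ray relations; everything else is bookkeeping with the quasimetric identity above and the symmetries of the crossratio.
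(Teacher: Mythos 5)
The paper offers no proof of this lemma at all: it is simply quoted from Bowditch (Lemma 4.6 of \cite{Bowditch}), whose argument is precisely the one you reconstruct. Your write-up is correct and self-contained --- the representation of $f(x)$ and $f(y)$ by the rays $(y,x,x_i)_i$ and $(x,y,y_j)_j$ based at $z$, the reduction of $(X_iY_j\mid cc)_\rho \aeq 0$ to the two-sided estimate $\rho(X_i,Y_j)\aeq i+j$, and the lower bound via an approximating tree for $\{x,y,z,x_i,y_j\}$ are exactly the intended steps --- so there is nothing to compare beyond noting that your proof supplies the details the paper leaves to the citation.
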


Assume that $M$ is a perfect compactum. Referring forward to Lemma \ref{lemma:SchwartzMilnor}, the main idea for proving Theorem \ref{theorem:Bowditch} becomes clear. It shows the importance of being able to construct a $G$-invariant hyperbolic path crossratio on $M$, compatible with the topology on $M$. The key ingredient to construct such a crossratio is called an ``annulus''.
\subsection{Annulus systems}
\begin{defn}
An {\bf annulus} $A$ is an ordered pair $(A^-,A^+)$ of disjoint closed subsets of $M$ such that $M\backslash (A^-\cup A^+)$ is non-empty. An {\em annulus system} $\mathcal{A}$ is a set of such annuli.
\end{defn}

Let us introduce the same notations as in \cite{Bowditch}. First, given an annulus $A=(A^-,A^+)$, we write $-A=(A^+,A^-)$. A system of annuli $\mathcal{A}$ is called {\bf symmetric} if $A\in \mathcal{A} \Leftrightarrow -A\in \mathcal{A}$.

For a closed set $K\subset M$, we write $K<A$ whenever $K\subseteq \mbox{int}(A^-)$ (here, int$(A^-)$ stands for the interior of $A^-$). We write $A<K$ when $K\subseteq \mbox{int}(A^+)$. If $A$ and $B$ are two annuli, then we write $A<B$ if $M\backslash \mbox{int}(A^+) <B$.

Now, fix an annulus system $\mathcal{A}$ and let $K,L$ be closed subsets of $M$. If $A_1,A_2,\ldots ,A_n$ satisfy
\[ K<A_1<A_2<\ldots <A_n<L, \]
then we refer to $(A_i)_i$ as a sequence of $n$ {\bf nested annuli seperating} $K$ and $L$.
We define $(K\mid L)$ as the supremum in $\N \cup \{\infty\}$ of all numbers $n\in \N$ such that there is a sequence of $n$ nested annuli seperating $K$ and $L$.

The most interesting case occurs when $K$ and $L$ both consist of two elements. To be precise, let $x,y,z$ and $w$ be $4$ distinct elements of $M$ and denote $(xy\mid zw):=(\{x,y\}\mid \{z,w\})$. It follows immediately from the definitions that the map $[(x,y,z,w)\mapsto (\{x,y\}\mid \{z,w\})]:M^{(4)}\rightarrow \N\cup \{\infty\}$ defines a crossratio $(..\mid ..)$ on $M$.
\begin{theorem}[see \cite{Bowditch}, Section 6]
Let $M$ be a perfect compactum and let $\mathcal{A}$ be a symmetric annulus system on $M$ which satisfies the following conditions:
\begin{enumerate}
\item (A1): If $x\neq y$ and $z\neq w$, then $(xy\mid zw)<\infty$.
\item (A2): There is some $k\geq 0$ such that there are no four points $x,y,z,w\in M$ with $(xz\mid yw)>k$ and $(xw\mid yz)>k$.
\end{enumerate}
Then, the map $[(x,y,z,w)\mapsto (xy\mid zw):M^{(4)}\rightarrow [0,\infty)],$ defined as the maximum number of nested annuli of $\mathcal{A}$ seperating $\{x,y\}$ and $\{z,w\}$, is a hyperbolic path crossratio on $M$.

Moreover, if $\mathcal{A}$ also satisfies
\[ (A3): \mbox{If }x,y,z\in M \mbox{ are distinct, then } (x\mid yz)=\infty, \]
then the crossratio is also compatible with the topology on $M$.
\label{theorem:annulitocrossratios}
\end{theorem}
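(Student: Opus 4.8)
The plan is to verify, in order, the crossratio symmetries, the two hyperbolicity axioms of Definition~\ref{defn:hypcro}, the path property, and finally compatibility with the topology. That the map takes values in $[0,\infty)$ rather than in $\N\cup\{\infty\}$ is exactly hypothesis (A1). The symmetry $(xy\mid zw)=(yx\mid zw)$ is immediate, since the quantity depends only on the sets $\{x,y\}$ and $\{z,w\}$, and $(xy\mid zw)=(zw\mid xy)$ follows from the symmetry of $\mathcal A$: given a separating chain $\{x,y\}<A_1<\dots<A_n<\{z,w\}$, replacing each $A_i$ by $-A_i\in\mathcal A$ and reversing the order yields a chain $\{z,w\}<-A_n<\dots<-A_1<\{x,y\}$ of equal length, so the two defining suprema agree. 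Hence the map is a well-defined crossratio.

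Next I would record the elementary combinatorics of the nesting relation on which everything else rests: transitivity of $<$ on closed sets and on annuli, and the monotonicity of a chain, namely that along $A_1<\dots<A_n$ the negative sides $A_i^-$ increase while the positive sides $A_i^+$ decrease (both checked directly from the definition $A<B \Leftrightarrow M\setminus\mathrm{int}(A^+)\subseteq\mathrm{int}(B^-)$ by taking complements). These statements are what let one concatenate and truncate chains with additive control of their lengths. Granting them, hyperbolicity axiom~(1) is a direct translation of (A2): for any four points the three pairings $(xy\mid zw)$, $(xz\mid yw)$, $(xw\mid yz)$ cannot have two of them exceed $k$ simultaneously, since under the appropriate relabelling any two of them play the role of $(xz\mid yw)$ and $(xw\mid yz)$, which (A2) forbids. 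Thus at most one pairing is large; relabelling $F=\{x,y,z,w\}$ so that it is $(xy\mid zw)$ gives $(xz\mid yw)\le k$ and $(xw\mid yz)\le k$, i.e. both $\aeq_k 0$, which is axiom~(1) with the same constant $k$.

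The heart of the matter, and the step I expect to be the main obstacle, is hyperbolicity axiom~(2) for a five-point set $F=\{x,y,z,w,u\}$, where one must produce the labelling realizing the ``tree with one interior edge'' pattern: the approximate equalities $(xy\mid zu)\aeq_k(xy\mid wu)$ and $(xu\mid zw)\aeq_k(yu\mid zw)$, the additivity $(xy\mid zw)\aeq_k(xy\mid zu)+(xu\mid zw)$, and the vanishing $\aeq_k 0$ of every remaining pairing. The strategy is to fix a maximal chain realizing $(xy\mid zw)$ and to locate $u$ along it: the annuli having $u$ on their positive side count toward $(xy\mid zu)$ and those having $u$ on their negative side count toward $(xu\mid zw)$, and by monotonicity $u$ crosses from one side to the other essentially once, so the two sub-chains account for the whole chain and additivity follows. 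Invariance of the counts when $z,w$ (resp. $x,y$) are interchanged on the side away from $u$ gives the two approximate equalities, while the vanishing of the cross pairings is forced by axiom~(1)/(A2) on the relevant four-point subsets. The genuine difficulty is entirely combinatorial: one must bound, by a single constant depending only on $k$, all the discrepancies arising when chains built for different but overlapping pairs are compared. This needs a crossing lemma describing how two maximal chains separating overlapping pairs may interleave, and it is here that the careful bookkeeping of \cite[\S6]{Bowditch} is required; I would follow that interleaving analysis.

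Finally, for the path property I would fix distinct $x,y,z,w$, take a maximal separating chain, and use perfectness of $M$ to choose a point $u_i$ in each of the successive open regions the chain cuts out, these being nonempty precisely because $M$ has no isolated points, obtaining $y=u_0,u_1,\dots,u_n=w$; counting the annuli lying between the positions of $u_i$ and $u_j$ then yields $(xu_i\mid zu_j)\aeq_p j-i$ for $i<j$. This step is more routine than axiom~(2), the only subtlety being the nonemptiness of the gaps. For the compatibility clause, (A3) supplies, for each point and each nearby pair, arbitrarily long chains of annuli, so that the regions cut out by such chains form a neighbourhood basis at every point; matching these regions with the crossratio neighbourhoods $D_{ab}(x,r)$ shows the crossratio topology refines that of $M$, while the converse inclusion uses that each $A^-,A^+$ is closed with nonempty interior. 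Together these identify the two topologies, giving compatibility.
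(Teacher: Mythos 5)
First, a point of comparison: the paper does not prove this theorem at all. It is stated with the attribution ``see \cite{Bowditch}, Section 6'' and used as a black box, so there is no in-paper argument to measure your proposal against; the relevant yardstick is Bowditch's own Section 6. Measured against that, the parts of your sketch that you actually carry out are correct. The symmetry $(xy\mid zw)=(zw\mid xy)$ via $A_i\mapsto -A_i$ with the order reversed checks out (each implication $K<A\Leftrightarrow -A<K$ and $A<B\Leftrightarrow -B<-A$ follows by taking complements in the definition $A<B\Leftrightarrow M\setminus\mathrm{int}(A^+)\subseteq\mathrm{int}(B^-)$), as do transitivity and the monotonicity of the two sides along a nested chain. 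Your derivation of the four-point axiom from (A2) is also right: applying (A2) to the three relabellings of a four-element set shows at most one of the three pairings exceeds $k$, and naming that one $(xy\mid zw)$ gives axiom (1) with the same constant.

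The genuine gap is the one you yourself flag: the five-point axiom. Your proposal describes the intended mechanism (locating $u$ along a maximal chain separating $\{x,y\}$ from $\{z,w\}$, splitting the chain at the crossing point, and comparing chains for overlapping pairs), but then explicitly defers the interleaving/bookkeeping lemma to \cite{Bowditch}. Since that lemma is precisely where all the work lies -- it is what produces a single constant controlling every discrepancy in axiom (2) simultaneously -- the proposal is an outline rather than a proof. Two smaller remarks. In the path-property step, the nonemptiness of the region between consecutive annuli $A_i<A_{i+1}$ does not come from perfectness of $M$: it comes from the definitional requirement $M\setminus(A_{i+1}^-\cup A_{i+1}^+)\neq\emptyset$ together with $M=\mathrm{int}(A_i^+)\cup\mathrm{int}(A_{i+1}^-)$, which forces any such point into $\mathrm{int}(A_i^+)$; perfectness is used elsewhere (to make the resulting crossratio perfect and in the compatibility argument). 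And in the compatibility step you should be explicit that one direction (crossratio-open implies open) uses (A1)/(A2) and closedness of the sides $A^\pm$, while the other uses (A3) to manufacture arbitrarily long chains separating a point from the complement of any of its neighbourhoods; your sketch gestures at both but proves neither.
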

Referring to Theorem \ref{theorem:crossratiotoquasimetric}, the above result shows the importance of finding a symmetric annulus system $\mathcal{A}$ on $M$ which satisfies conditions $(A1), (A2), (A3)$ above. We will do this twice: once in Section \ref{sc:Bowditch} using the $3$-cocompactness of the $G$-action on $M$ and once in Section \ref{sc:Bowditch+} using the metrizability of $M$.
	
\section{Locally compact uniform convergence groups \label{sc:Bowditch}}

Let $M$ be a compact space, and let $G$ be a locally compact group acting \textbf{continuously} on $M$, i.e. such that the map $G \times M \to M, (\gamma,x)\mapsto \gamma\cdot x$ is continuous.
 For $n \in \NN$, let $M^{(n)}$ be the space of (pairwise) distinct $n$-tuples 
 \[ M^{(n)}=\{(x_1,x_2,\ldots ,x_n)\mid x_i\neq x_j \ \forall i\neq j \}. \] We endow $M^n$ with the product topology, and $M^{(n)}$ with the topology induced by the inclusion $M^{(n)} \subset M^n$. It is clear that for each $n\in \N$, we obtain a continuous $G$-action on $M^{(n)}$ by letting $G$ act componentwise. The action of $G$ on $M$ is called 
 \begin{itemize}
 	\item \textbf{proper} if for every compact space $K \subset M$, the set $\{\gamma \in G \mid \gamma K \cap K \neq \emptyset\}$ has compact closure.
 	\item \textbf{cocompact} if there is a compact subset $K \subset M$ such that $M= G K$. 
 	\item \textbf{$n$-proper} (resp. \textbf{$n$-cocompact}) if the induced action on $M^{(n)}$ is proper (resp. cocompact).
 \end{itemize}
%  A locally compact group $G$ acting $3$-properly on a compact space is called a \textbf{convergence group}. If moreover the action is $3$-cocompact, then the convergence group is called \textbf{uniform}.
We record here some essential features of locally compact hyperbolic groups. Recall from the introduction that a locally compact group $G$ acting continuously on a compact space $M$ is called a \textbf{convergence group} if the action is $3$-proper. The convergence group is called \textbf{uniform} if the action is also $3$-cocompact. A locally compact group is called \textbf{hyperbolic} if it is compactly generated, and if the Cayley graph with respect to some compact generating set is hyperbolic. In fact, hyperbolicity does not depend on the compact generating set, as two compact generating sets will give rise to quasi-isometric Cayley graphs. Note that Cayley graphs of non-discrete groups with respect to compact generating sets have two downsides: they are not proper metric spaces (indeed locally infinite graphs), and the $G$-action is not continuous. However, nicer isometric actions can be produced for any compactly generated locally compact group (see \cite[Proposition 2.1]{Caprace} and references therein).
\begin{prop} \label{prop:nice_space} Let $G$ be a compactly generated locally compact group. Then $G$ acts continuously, properly and cocompactly by isometries on a proper geodesic metric space $X$. If moreover the connected component of the identity is compact, then $X$ can be chosen to be a locally finite graph.
\end{prop}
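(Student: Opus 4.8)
The plan is to treat the two assertions by two related constructions, dealing first with the case where the identity component $G^0$ is compact --- which also yields the locally finite graph --- and then with the general case (compare \cite{Caprace, Cornulier}).

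Suppose first that $G^0$ is compact. Then $G/G^0$ is totally disconnected and locally compact, so by van Dantzig's theorem it contains a compact open subgroup $\bar U$. Pulling back along the quotient map $q \colon G \to G/G^0$, the subgroup $U = q^{-1}(\bar U)$ is open in $G$ and fits into an extension of the compact group $\bar U$ by the compact group $G^0$, hence is itself a compact open subgroup of $G$. I would then build the Cayley--Abels graph $X$: its vertex set is the discrete coset space $G/U$, and, after replacing a given compact generating set by a symmetric compact generating set $S = USU \supseteq U$, I join $gU$ to $gsU$ for every $s \in S$. Local finiteness is the first point to verify: since $U$ is open and $S$ is compact, $S$ is covered by finitely many left cosets of $U$, so the base vertex $U$ has finitely many neighbours, and vertex-transitivity of the $G$-action propagates this to all vertices. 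The graph is connected because $S$ generates $G$. The left $G$-action on $G/U$ is continuous because $U$ is open, it is vertex-transitive hence cocompact, and it is proper because each vertex stabilizer is a conjugate of the compact group $U$; giving each edge length one makes $X$ a proper geodesic metric space on which $G$ acts geometrically by isometries. This settles the second assertion, and in particular the first one when $G^0$ is compact.

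For the general case I would no longer expect a graph (a non-discrete connected part forces a continuum of vertices), and instead produce a proper geodesic metric space directly. Since $G$ is compactly generated and locally compact it is $\sigma$-compact, so by the Kakutani--Kodaira theorem there is a compact normal subgroup $K \trianglelefteq G$ with $G/K$ second countable; because $K$ is compact, an isometric $G$-action factoring through $G/K$ is proper (resp.\ cocompact) exactly when the induced $G/K$-action is, so I may replace $G$ by $G/K$ and assume $G$ is second countable. Struble's theorem then furnishes a left-invariant, proper, compatible metric $d$ on $G$, and compact generation guarantees that $d$ is large-scale geodesic: there is $b>0$ so that any two points are joined by a $b$-chain of nearly optimal length. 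To upgrade this to an honest geodesic space I would glue in, for every ordered pair $(x,y)$ with $d(x,y)\le b$, a segment of length $d(x,y)$, and equip the result $\hat X$ with the induced length metric; the $G$-action extends to $\hat X$ isometrically and remains proper and cocompact.

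The main obstacle is this last step: verifying that $\hat X$ is a \emph{proper} geodesic space rather than merely a large-scale geodesic one. The key is that $\hat X$ is a complete, locally compact length space --- local compactness holding because, around each point of $G$, the glued segments are parametrized by the compact set $\{y : d(x,y)\le b\}$ of points at distance at most $b$ --- so that Hopf--Rinow applies and yields that $\hat X$ is proper and geodesic. Packaging these verifications carefully, while controlling the interaction between the (possibly uncountable) family of glued segments and properness, is where the real work lies; the graph case, by contrast, is essentially bookkeeping once the compact open subgroup is in hand.
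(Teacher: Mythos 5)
The paper itself offers no proof of this proposition: it is quoted directly from \cite[Proposition 2.1]{Caprace}, so there is no internal argument to compare against, and your two-case strategy (Cayley--Abels graph when $G^0$ is compact; Kakutani--Kodaira, Struble and a gluing construction in general) is exactly the standard route taken in that reference. The first half is correct and essentially complete: van Dantzig applied to $G/G^0$, the compact open subgroup $U=q^{-1}(\bar U)$, and the Cayley--Abels graph on $G/U$ with respect to a symmetric, bi-$U$-invariant compact generating set all work as you describe (the only cosmetic point being that $USU\supseteq U$ requires $1\in S$, which one arranges by adjoining the identity).

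The one genuine inaccuracy is in the second half: compact generation does \emph{not} guarantee that a Struble metric is large-scale geodesic. For instance $d(m,n)=\log(1+\lvert m-n\rvert)$ is a proper, left-invariant, compatible metric on $\Z$, yet any chain from $0$ to $n$ whose steps have $d$-length at most $b$ has total $d$-length at least $c_b\, n$, which is not comparable to $d(0,n)=\log(1+n)$. Fortunately your construction does not need this claim. All that is required is that $b$ be chosen large enough that the compact ball $\bar B_d(1,b)$ contains a generating set, so that any two points of $G$ are joined by \emph{some} chain with steps of $d$-length at most $b$; the glued space $\hat X$ is then a connected, complete, locally compact length space (closed $\hat X$-balls lie in the union of the segments joining pairs of points of a compact $d$-ball, by properness of $d$), hence proper and geodesic by Hopf--Rinow, and the $G$-action on it is continuous, isometric, proper and cocompact for the reasons you give --- note that the restriction of the length metric to $G$ dominates $d$, which is what makes properness of the action immediate. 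That $\hat X$ is quasi-isometric to $G$ with its word metric then comes for free from the classical Schwartz--Milnor lemma; the auxiliary metric $d$ itself never needs to be large-scale geodesic. With that one claim deleted, the argument is sound.
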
 
In particular, a locally compact group $G$ is hyperbolic if and only if $G$ has a continuous proper cocompact isometric action on a proper geodesic hyperbolic metric space $X$ (see \cite{Caprace}, Corollary $2.6$). Thus, the quotient of $G$ by the compact kernel of the action is a closed cocompact subgroup of $\Isom(X)$. Conversely, $\Isom(X)$ is naturally a locally compact topological group, and if the action of $\Isom(X)$ on $X$ is cocompact, then $\Isom(X)$ is hyperbolic.
\begin{example} \begin{enumerate}
	\item Let $X = \HH^n$ and $G = \Isom(X)$. Then $X$ is a proper geodesic hyperbolic metric space, and $G$ acts transitively on $X$ so $G$ is hyperbolic. Remark moreover that $G$ acts transitively on $\partial X = S^{n-1}$.
	\item Let $X$ and $G$ be as above. Let $\xi$ be a point in the boundary of $\HH^n$. Then the group $H = \RR^{n-1} \rtimes \RR$ (where the right hand factor acts on the left by homothety, i.e. $e^\lambda \Id$) acts transitively on $X$ and fixes $\xi$ ($\RR^{n-1}$ stabilizes each horoball centered at $\xi$, and $\lambda$ is realized as a translation of length $\lambda$ along an axis with $\xi$ as one of its endpoints). Thus $H$ is a cocompact closed subgroup of $G$, and as such is hyperbolic, with the same boundary $S^{n-1}$. Remark that $H$ fixes $\xi$ and acts transitively on $S^{n-1} \backslash \{\xi\}$.
	\item Let $X$ be a locally finite tree such that $G = \Isom(X)$ acts cocompactly on $X$. Then $G$ is a hyperbolic group with a totally disconnected boundary.
	\end{enumerate}
\end{example}

This space $X$ is determined up to quasi-isometry and will serve as the analogue of the Cayley graph in the finitely generated setting. We can define the hyperbolic boundary of a locally compact hyperbolic group as the boundary of such a space $X$. For this reason, many results that hold for discrete hyperbolic groups also hold for locally compact hyperbolic groups. For example, if $G$ is a non-elementary hyperbolic group, then $\partial G$ is compact, perfect and metrizable. Moreover, a locally compact hyperbolic group acts $3$-properly and $3$-cocompactly on its boundary (see \cite{Bo4} for a proof in the discrete setting, the generalization to locally compact groups is straightforward).
% Moreover, if $G$ is any topological group acting continuously on a proper hyperbolic metric space $X$ with perfect boundary then $G$ acts on $X$ properly (resp. cocompactly) if and only if the induced action of $G$ on the boundary $\partial X$ is $3$-proper (resp. $3$-cocompact) . In particular, non-elementary hyperbolic groups act as uniform convergence groups on their boundary.
%This last statement can also be checked using the dynamical reformulation of the convergence property below, see \cite{Freden,Tukia}.

The goal of this section is to prove that conversely, uniform convergence groups on compact perfect sets $M$ are hyperbolic and $M$ is $G$-equivariantly homeomorphic to the hyperbolic boundary.

\begin{theoremBowditch} Let $G$ be a locally compact group acting continuously, 3-properly and 3-cocompactly by homeomorphism on a perfect compactum $M$. Then $G$ is hyperbolic, and there is a $G$-equivariant homeomorphism $\partial G \to M$.
\end{theoremBowditch}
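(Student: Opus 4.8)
The plan is to reduce the entire statement to the construction of a single combinatorial object: a $G$-invariant, symmetric annulus system $\mathcal{A}$ on $M$ satisfying the conditions (A1), (A2) and (A3) of Theorem~\ref{theorem:annulitocrossratios}. Once such a system is available, the rest of the argument is bookkeeping through the machinery assembled in Section~\ref{sec:tools}. Indeed, by Theorem~\ref{theorem:annulitocrossratios} the system produces a hyperbolic path crossratio $(\cdot\mid\cdot)$ on $M$ that is compatible with the topology of $M$ (condition (A3) being precisely what forces compatibility), and since $M$ is perfect this crossratio is automatically perfect. Because $\mathcal{A}$ is $G$-invariant, so is the resulting crossratio. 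Note also that the crossratio topology is always metrizable, so this construction establishes \emph{en passant} that $M$ is metrizable, a property we are not entitled to assume in Theorem~\ref{theorem:Bowditch}; the whole construction must therefore be carried out without reference to any metric on $M$.

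The construction of $\mathcal{A}$ is purely topological, since an annulus is nothing but an ordered pair of disjoint closed subsets. The idea is to fix one (or finitely many) sufficiently ``thick'' seed annulus $A_0$ robustly separating a point from a pair, and to take $\mathcal{A}$ to be the symmetric $G$-orbit $\{\,gA_0,\,-gA_0 : g\in G\,\}$, so that $G$-invariance holds by construction. Each hypothesis is then spent on one axiom. The $3$-properness of the action, reformulated as the dynamical convergence property, yields (A1) and (A2): a sequence of group elements leaving every compact set collapses all of $M$ except one point onto a single attracting point, and this collapsing is exactly what prevents infinitely many nested annuli from separating two fixed disjoint pairs (giving the finiteness (A1)) and what rules out four points with both ``crossed'' crossratios large (giving the hyperbolicity axiom (A2)). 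The $3$-cocompactness, together with perfectness, yields (A3) and the path property: cocompactness of the action on $M^{(3)}$ allows us to translate the seed annulus so that arbitrarily many of its copies nest around any prescribed point, and simultaneously supplies the uniform bounded-jump estimates needed for $(\cdot\mid\cdot)$ to be a genuine path crossratio.

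I expect this construction, and in particular the verification of the hyperbolicity axiom (A2), to be the main obstacle. This is the technical heart of Bowditch's argument, and the delicate point in the locally compact setting is that the collapsing dynamics must be controlled \emph{uniformly} across all the $G$-translates of $A_0$; this is where the precise interplay between properness and the topology of $M$ is exploited.

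Granting the $G$-invariant, compatible, perfect, hyperbolic path crossratio, I would conclude as follows. By Theorem~\ref{theorem:crossratiotoquasimetric}, the crossratio induces a hyperbolic path quasimetric $\rho$ on the space of distinct triples $Q=M^{(3)}$, and $M$ is identified by a homeomorphism with the hyperbolic boundary $\partial Q$. Since the crossratio is $G$-invariant, formula \eqref{equation:quasimetric} shows that $G$ acts on $(Q,\rho)$ by isometries. This isometric action is metrically proper and cocompact: $\rho$-bounded subsets of $Q$ have compact closure in $M^{(3)}$ while compact subsets of $M^{(3)}$ are $\rho$-bounded, so metric properness and cocompactness of the $(Q,\rho)$-action correspond respectively to the $3$-properness and $3$-cocompactness of the action on $M$. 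A Milnor--\v{S}varc argument for quasimetric spaces (Lemma~\ref{lemma:SchwartzMilnor}) then shows that $G$ is compactly generated and quasi-isometric to $(Q,\rho)$. As $(Q,\rho)$ is hyperbolic and hyperbolicity is a quasi-isometry invariant, $G$ is a hyperbolic group; finally the orbit map, being a $G$-equivariant quasi-isometry $G\to Q$, extends to a $G$-equivariant homeomorphism of boundaries $\partial G\to\partial Q\cong M$, which is the desired conclusion.
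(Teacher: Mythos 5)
Your proposal follows essentially the same route as the paper: construct a symmetric $G$-invariant annulus system as the orbit of finitely many seed annuli coming from a cover of a compact fundamental domain in $M^{(3)}$, spend $3$-properness (in its dynamical collapsing form) on axioms (A1)--(A2), spend $3$-cocompactness on (A3), and then run the crossratio-to-quasimetric machinery followed by the quasimetric Schwartz--Milnor lemma, with the identification of $\rho$-bounded sets with relatively compact subsets of $M^{(3)}$ as the remaining technical point --- exactly the decomposition in Sections~\ref{sec:annulussystemUCG} and~\ref{sec:proofUCG}. The only minor inaccuracy is that the path property of the crossratio is delivered by Theorem~\ref{theorem:annulitocrossratios} as a consequence of (A1)--(A2) alone, not by $3$-cocompactness.
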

The ideas in \cite{Bowditch} valid in the discrete case require modification. Our initial goal will be the construction of an annulus system satisfying the conditions of Theorem \ref{theorem:annulitocrossratios}. One of the key ingredients in this construction is a dynamical reformulation of the convergence property.

\begin{notation}
Throughout this section, $M$ will be a perfect compactum and $G$ will be a locally compact group acting continuously by homeomorphisms on $M$. Note that we do not assume metrizability of $M$ and so we need to work with nets instead of sequences.
\end{notation}
\subsection{A dynamical reformulation of the convergence property} \label{sec:dynamicconvergenceproperty}

In order to reformulate the convergence property in dynamical terms, we first introduce some terminology. We only recall the very basic facts about nets and refer the reader to Section 1 of \cite{Bo4} for a more detailed account. If the reader is unfamiliar with nets, then he could in a first reading skip this paragraph, assume $M$ to be metrizable and replace the word {\em net} with the word {\em sequence} in all that follows.

Recall that a net in $M$ is a map $f:(I,\leq)\rightarrow M$ where $(I,\leq)$ is a directed set. Often, we abbreviate $f(i)$ as $f_i$ for $i\in I$ and we write $f$ as $(f_i)_{i\in I}$ or $(f_i)_i$ for short. We tacitly assume that the domains of two nets $f$ and $g$, when denoted with the same subscripts (i.e. as $(f_i)_i, (g_i)_i$), are the same. We say that a property is true for all {\bf sufficiently large $i$} if there is some $j\in I$ such that the property holds for all $j\leq i\in I$. Given another directed set $(J,\leq)$, we say that a map $J\rightarrow I, j\mapsto i(j)$ is {\bf cofinal} if for any $l\in I$, we have $i(j)\geq l$ for all sufficiently large $j$.
\begin{defn}
\begin{enumerate}
\item
A net $(f_i)_i$ is called {\bf wandering} if given any compact set $K\subset M$, we have $f_i\in M\backslash K$ for all $i$ sufficiently large.
\item
A net $(f_i)_i$ in a set $M$ is said to {\bf converge} to $x\in M$ if given any neighbourhood $U$ of $x$, the elements $f_i$ lie in $U$ for all sufficiently large $i$. 
\item
A {\bf subnet} of $f$ is a net that is obtained by precomposing $f$ with a cofinal map. Given two nets $(f_i)_i, (g_i)_i$, then subnets $(\tilde{f}_j)_j, (\tilde{g}_j)_j$ (i.e. subnets obtained by precomposing with the same cofinal map) are called {\em common subnets} of $f$ and $g$.
\end{enumerate}
\end{defn}
It is interesting to note that, although sequences are special examples of nets, a subnet of a sequence need not be a subsequence. We also state the well known fact that any net in a compact set has a convergent subnet (see for example \cite{Murdeshwar}). Recall that the corresponding statement for sequences, i.e. that every sequence has a convergent subsequence, is not true in general, although it is true in compact {\em metrizable} sets.

\begin{defn}
A net $(\gamma_i)_i\subset G$ is called {\bf collapsing} 
if there are $a,c\in M$ such that $(\gamma_i)_{\mid M\backslash \{a\}}$ converges locally uniformly to $c$.
\end{defn}
\begin{prop} Let $G$ be a locally compact group acting continuously on a compact space $M$. Then $G$ acts as a convergence group if and only if every wandering net has a collapsing subnet.
\end{prop}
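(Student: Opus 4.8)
The plan is to prove both implications by analysing the graphs of the $\gamma_i$ inside $M \times M$. Throughout I read a \emph{wandering} net as a net $(\gamma_i)_i$ in $G$ eventually leaving every compact subset of $G$; note $(\gamma_i^{-1})_i$ is then wandering too, as inversion is a homeomorphism of $G$. I will freely use that, since $M^{(3)}$ is locally compact Hausdorff (open in $M^3$), $3$-properness is equivalent to the map $\Theta \colon G \times M^{(3)} \to M^{(3)} \times M^{(3)}$, $(\gamma, t) \mapsto (\gamma t, t)$, being proper in the sense that preimages of compacta are compact. The easy implication is $(\Leftarrow)$: assuming every wandering net collapses, fix a compact $K \subseteq M^{(3)}$; I must show $S = \{\gamma : \gamma K \cap K \neq \emptyset\}$ is relatively compact. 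If not, one produces (indexing by the compact subsets of $G$) a wandering net $(\gamma_i)_i$ in $S$, with $X_i \in K$ and $\gamma_i X_i \in K$. Passing to a subnet, $X_i \to X = (x^1,x^2,x^3)$ and $\gamma_i X_i \to Y = (y^1,y^2,y^3)$ in $K$, both triples distinct. Passing to a collapsing subnet gives $a, c \in M$ with $\gamma_i \to c$ locally uniformly on $M \setminus \{a\}$. Since at most one $x^j$ equals $a$, at least two, say $x^1, x^2$, lie in $M \setminus \{a\}$; then $x^j_i$ eventually sits in a fixed compact neighbourhood of $x^j$ missing $a$, so $\gamma_i x^j_i \to c$, forcing $y^1 = c = y^2$ and contradicting $y^1 \neq y^2$.

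For $(\Rightarrow)$, the heart of the matter, let $(\gamma_i)_i$ be wandering and consider the graphs $\Gamma_i = \{(x, \gamma_i x) : x \in M\}$, closed in the compact space $M \times M$. As the hyperspace of closed subsets of $M \times M$ is compact, I pass to a subnet so that $\Gamma_i$ converges, in the Vietoris topology, to a closed set $\Lambda \subseteq M \times M$. Three properties of $\Lambda$ drive the argument. First, both projections of $\Lambda$ are onto $M$: for $x \in M$ a constant net and a convergent subnet of $(\gamma_i x)$ give $(x,y) \in \Lambda$ for some $y$, and dually using $(\gamma_i^{-1})$ for the second projection. Second, by upper semicontinuity of the Vietoris limit, any subnet limit of points $(x_i, \gamma_i x_i) \in \Gamma_i$ lies in $\Lambda$. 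Third, and crucially, $\Lambda$ contains no three \emph{pairwise non-aligned} points, i.e. no three points with pairwise distinct first and pairwise distinct second coordinates: by the lower Vietoris condition such points can be approximated simultaneously by points of $\Gamma_i$, producing a distinct triple $T_i \in M^{(3)}$ with $\gamma_i T_i \in M^{(3)}$ and both $T_i, \gamma_i T_i$ converging in $M^{(3)}$; properness of $\Theta$ would then confine $(\gamma_i)$ to a compact subset of $G$, against wandering.

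It remains to read off collapsing, and this structural step is where I expect the real work. Viewing $\Lambda$ as a bipartite graph between two copies of $M$ (each $(x,y)$ an edge), the third property says $\Lambda$ has no matching of size $3$; since the matching number is finite, K\"onig's theorem furnishes a vertex cover of size at most two, so $\Lambda$ lies in a union of at most two ``lines'' (sets $\{a\} \times M$ or $M \times \{c\}$). Surjectivity of the projections together with $|M| \geq 3$ rules out two rows, two columns, or a single line, leaving exactly a \emph{cross} $\Lambda \subseteq (\{a\} \times M) \cup (M \times \{c\})$; in particular the $\Lambda$-fibre over any $x \neq a$ is precisely $\{c\}$. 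Finally I upgrade to local uniform convergence: if some compact $L \subseteq M \setminus \{a\}$ and neighbourhood $V$ of $c$ violated $\gamma_i L \subseteq V$ cofinally, a subnet would yield $x_i \to x_* \in L$ and $\gamma_i x_i \to w \in M \setminus V$, whence $(x_*, w) \in \Lambda$ with $x_* \neq a$ and $w \neq c$, which is impossible. Thus $(\gamma_i)$ collapses. The degenerate cases $|M| \leq 2$, where $M^{(3)} = \emptyset$ and both sides hold trivially after passing to an eventually constant subnet, are dealt with separately.

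The main obstacle is exactly this identification of $\Lambda$ with a cross: it is the assertion that a wandering net has a single attracting point $c$ and a single repelling point $a$, rather than several. The naive approach of extracting pointwise limits $\phi(x) = \lim \gamma_i x$ only gives that $\phi$ takes at most two values and runs into awkward case analysis when trying to promote this to a single attractor; routing everything through the graph limit and the K\"onig-type combinatorics sidesteps these cases, at the cost of some care with Vietoris convergence of nets and with the equivalent formulations of properness.
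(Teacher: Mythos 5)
Your argument is correct, but it follows a genuinely different route from the paper, which in fact offers no proof of this proposition at all: it defers to Section~1 of \cite{Bo4} for the discrete case and asserts that the locally compact case follows ``up to minor modifications''. Your route --- passing to a Vietoris-convergent subnet of the graphs $\graph(\gamma_i)$ in $\Closed(M\times M)$, showing the limit $\Lambda$ projects onto $M$ on both sides, admits no matching of size $3$ (else $3$-properness would trap the wandering net in a compact set), and hence by K\"onig is covered by two ``lines'', which surjectivity and $|M|\geq 3$ force to be a cross $(\{a\}\times M)\cup(M\times\{c\})$ --- is essentially the $n=3$ case of Gerasimov's graph-theoretic reformulation of $n$-properness (Proposition~\ref{prop:Gerasimov}), which the paper invokes later for $n\geq 4$ but not here, whereas Bowditch's original argument proceeds by a more hands-on analysis of pointwise limits along subsequences. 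What your approach buys is precisely the content the paper leaves implicit: a self-contained treatment of the non-metrizable setting (nets, compactness of the hyperspace of a compact Hausdorff space) and a clean combinatorial step replacing Bowditch's case analysis; the cost is some care with Vietoris convergence, which you handle correctly (upper semicontinuity for limits of points $(x_i,\gamma_i x_i)$, the lower condition for approximating a putative matching of size $3$). Two small points deserve an explicit sentence in a final write-up: K\"onig's equality is being applied to an infinite bipartite graph, which is legitimate here only because the matching number is finite, where the alternating-path proof goes through verbatim; and in the no-matching-of-size-$3$ step you should choose the product neighbourhoods $U^j\times V^j$ with pairwise disjoint closures on each side, so that the triples $T_i$ and $\gamma_i T_i$ lie in a single compact subset of $M^{(3)}$ --- which is all that the paper's definition of $3$-properness requires, without any appeal to convergence of the $T_i$ themselves.
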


The equivalence of both definitions of convergence groups is shown in Section $1$ of \cite{Bo4} in the case of {\em discrete} groups. Up to minor modifications, one can generalize these proofs to the locally compact setting. We remark that when $M$ is also metrizable, then $G$ acts as a convergence group on $M$ if and only if every wandering {\em sequence} of $G$ has a convergent {\em subsequence} (see \cite{Bo4}, \cite{Bowditch}).

\subsection{Constructing a suitable annulus system as in Theorem \ref{theorem:annulitocrossratios}} \label{sec:annulussystemUCG}
The action of $G$ on $M$ induces naturally an action on the closed subsets of $M$ by setting $\gamma C=\{\gamma c\mid c\in C\}$ for every $\gamma \in G$ and $C\subset M$ closed. Consequently, we obtain an action on the collection of annuli of $M$ by setting $\gamma(A^-,A^+)=(\gamma A^-,\gamma A^+)$.
\begin{defn}
An annulus system $\mathcal{A}$ of $M$ is called {\em $G$-invariant} if for any $\gamma \in G$, we have $\gamma A\in \mathcal{A} \Leftrightarrow A\in \mathcal{A}$.
\end{defn}
Assume that $\mathcal{A}$ is a $G$-invariant annulus system such that $\mathcal A / G$ is finite. Let $\mathcal A^* = \{A^*_i\}_{1 \leq i \leq m}$ be a family of representatives of $G$-orbits%, and call the $A^*_i$ the \textbf{basic annuli}
. Call a family $\mathcal B \subset \mathcal A$ of annuli \textbf{bounded} if there exists a compact set $K \subset G$ such that $\mathcal B \subset K \mathcal A^*$. This definition does not depend on the choice of $\mathcal A^*$. Clearly, a finite family of annuli is always bounded, but the converse need not hold if $G$ is not discrete. %Say that two annuli $A$ and $A'$ are \textbf{unrelated} if for any $\epsilon_1, \epsilon_2 \in \{\pm 1\}$ one has $\epsilon_1 A \not < \epsilon_2 A'$.
	
	\begin{lemma} \label{lem:unbddannuli} Let $\mathcal B$ be a bounded family of annuli. Then there is some $n \in \NN$ such that if $A_1,A_2,\ldots ,A_q\in \mathcal{B}$ with $A_1 < A_2 < \ldots < A_q$, then we must have $q < n$. In other words, there is a bound on the size of a nested subfamily of annuli.
	\end{lemma}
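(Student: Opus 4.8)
The plan is to prove the statement by a compactness argument that bounds the length of a nested chain in terms of a covering number of the compact set $K$. The one structural fact I would record first is that the relation $<$ on annuli is transitive: if $A<B$ then $M\setminus\operatorname{int}(A^+)\subseteq\operatorname{int}(B^-)\subseteq B^-\subseteq M\setminus\operatorname{int}(B^+)$, so $A<B<C$ forces $M\setminus\operatorname{int}(A^+)\subseteq\operatorname{int}(C^-)$, i.e. $A<C$. In particular, within a chain $A_1<A_2<\dots<A_q$ one has $A_i<A_j$ for \emph{all} $i<j$, and any subfamily of the chain is again a nested chain.

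First I would reduce to a single $G$-orbit. Since $\mathcal B\subseteq K\mathcal A^*$ with $\mathcal A^*=\{A^*_1,\dots,A^*_m\}$ finite, each member of the chain can be written $A_s=g_s A^*_{\iota(s)}$ with $g_s\in K$ and $\iota(s)\in\{1,\dots,m\}$. By pigeonhole at least $\lceil q/m\rceil$ of the indices $s$ share a common representative $A^*=(P,Q)$, and by transitivity these still form a nested subchain built entirely from a single $A^*$. It therefore suffices to bound the length $N$ of a nested chain $g_1A^*<g_2A^*<\dots<g_NA^*$ with all $g_t\in K$; if every such single-representative chain has length at most $r$, then $q\le mr$ and the lemma holds with $n=mr+1$.

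The heart of the argument is to translate nesting into a displacement statement that compactness can control. Writing $C=M\setminus\operatorname{int}(Q)$ and using that each $g$ acts as a homeomorphism (so $\operatorname{int}(gS)=g\operatorname{int}(S)$), one checks that $g_iA^*<g_jA^*$ is equivalent to $g_j^{-1}g_i\,C\subseteq\operatorname{int}(P)$. Now fix a point $x_0\in M\setminus(P\cup Q)$, which exists because $A^*$ is an annulus; then $x_0\in C$ while $x_0\notin P$. As $P$ is closed, $M\setminus P$ is an open neighbourhood of $x_0$, so by continuity of the action there is a symmetric neighbourhood $V$ of the identity with $Vx_0\subseteq M\setminus P\subseteq M\setminus\operatorname{int}(P)$. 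Consequently no $h\in V$ can satisfy $hC\subseteq\operatorname{int}(P)$, since such an $h$ would have to send $x_0\in C$ into $\operatorname{int}(P)$. Combined with the transitivity observation, this gives $g_j^{-1}g_i\notin V$, hence $g_i^{-1}g_j\notin V$, for all $i\neq j$ appearing in the chain; in other words the $g_t$ are pairwise $V$-separated. Intersecting the finitely many neighbourhoods arising from the $m$ representatives makes the choice of $V$ uniform.

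Finally I would invoke compactness of $K$: pick a symmetric neighbourhood $V_0$ of the identity with $V_0V_0\subseteq V$ and cover $K$ by finitely many left translates $h_1V_0,\dots,h_rV_0$. If a pairwise $V$-separated family had more than $r$ elements, two of them, say $g_i,g_j$, would lie in a common $h_\ell V_0$, whence $g_i^{-1}g_j\in V_0^{-1}V_0\subseteq V$, a contradiction. Thus a single-representative chain has length at most $r$, and a general chain has length at most $mr$, so $n=mr+1$ works. The main obstacle, and the only genuinely non-formal step, is the continuity/separation argument of the third paragraph that produces the identity neighbourhood $V$; the pigeonhole reduction and the covering estimate are routine. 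It is worth noting that this proof uses only continuity of the action and compactness of $K$, and not the convergence property itself.
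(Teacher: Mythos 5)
Your proof is correct and follows essentially the same route as the paper: both arguments use a point of the nonempty open set $M\setminus(A^{*-}\cup A^{*+})$ to produce an identity neighbourhood $V$ obstructing $\gamma A^*<\gamma' A^*$ for $\gamma,\gamma'$ in a common left coset, and then cover the compact set $K$ by finitely many such cosets. The only differences are organizational (you reduce to a single orbit first and then cover, while the paper covers first and pigeonholes on the orbit index and the coset index simultaneously, giving the bound $n=mp$ versus your $n=mr+1$), plus your welcome explicit verification of the transitivity of $<$, which the paper uses tacitly.
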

	\begin{proof} Given any annulus $A$, the set $M \backslash (A^+ \cup A^-)$ is nonempty and open so that there is an open neighborhood $U_A$ of the identity in $G$ such that for each $\gamma \in U_A$ one has that $\gamma.A \not < A$. % is unrelated to $A$. 
	Pick $V_A$ a smaller open neighborhood of the identity such that $V_A^{-1} V_A \subset U_A$. Thus for any $\gamma, \gamma' \in V_A,$ one has $\gamma.A \not < \gamma' .A$. % are unrelated.
	The same holds if $\gamma, \gamma'$ lie in the same left coset of $V_A$.
	
	Let $\{A^*_i\}_{1\leq i \leq m}$ be a family of representatives of $G$-orbits of annuli, and set $V = \cap_{1 \leq i \leq m} V_{A^*_i}$ where $V_{A^*_i}$ is constructed as above. Then $V$ is an open neighborhood of the identity. Since $\mathcal B$ is bounded, there is a compact set $K \subset G$ such that $\mathcal B \subset K \mathcal A^*$. The set $K$ is covered by finitely many $G$-translates $\eta_1 V, \ldots, \eta_p V$ of $V$. Thus any $A \in \mathcal B$ can be written as $A = \eta_{j} \gamma A^*_{i}$ for some $\gamma \in V$.
	
	We claim that we can take $n = mp$. Indeed, suppose that $A_1 < A_2 < \ldots < A_q$ with $A_k \in \mathcal B$. Write $A_k = \eta_{j_k} \gamma_k A^*_{i_k}$ where $\gamma_k \in V$. Suppose $q > mp$ then there exists $1 \leq k < l \leq q$ with $j_k = j_l$ and $i_k = i_l$. But since $\gamma_k$ and $\gamma_l$ are in $V$ this contradicts the fact that $A_k < A_l$.% are related.
	\end{proof}
	
	\begin{lemma} \label{lem:3ptsmax} Let $(x_i)_i, (y_i)_i, (z_i)_i, (w_i)_i\in  M$ be sequences. Suppose that we have a sequence of annuli $(A_i)_i$, such that $\{x_i, y_i\} < A_i < \{z_i, w_i\}$ for all $i$ and such that the set $\{A_i\}_i$ is unbounded. Then either $(x_i)_i$ and $(y_i)_i$ have a common subnet converging to a point $x$ or $(z_i)_i$ and $(w_i)_i$ have a common subnet converging to a point $z$.
	\end{lemma}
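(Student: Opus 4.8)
The plan is to convert unboundedness of the annulus family into an \emph{escaping} net of group elements, apply the dynamical reformulation of the convergence property to collapse all of $M$ except one point to a single limit, and then read off which of the two pairs must converge.

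First I would exploit that $\mathcal A$ is $G$-invariant with finitely many orbits: for each $i$ choose $\gamma_i \in G$ and a representative $A^*_{j(i)} \in \mathcal A^*$ with $A_i = \gamma_i A^*_{j(i)}$. If the set $\{\gamma_i\}_i$ were contained in a compact $K \subset G$, then $\{A_i\}_i \subseteq K\mathcal A^*$ would be bounded, contrary to hypothesis; hence $\{\gamma_i\}_i$ is not relatively compact. Passing to the one-point compactification of $G$, the point at infinity then lies in the closure of $\{\gamma_i\}_i$, so $(\gamma_i)_i$ admits a subnet leaving every compact subset of $G$, i.e. a wandering subnet. Along this subnet the indices $j(i)$ take only finitely many values, so after a further subnet I may assume $j(i) \equiv j$ is constant and write $A_i = \gamma_i A^*$ with $A^* := A^*_j = (B^-,B^+)$ fixed, while $(\gamma_i)_i$ still wanders and the nesting $\{x_i,y_i\} < A_i < \{z_i,w_i\}$ persists.

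Next, since $G$ acts as a convergence group, the wandering net $(\gamma_i)_i$ has a collapsing subnet: there are $a,c \in M$ with $(\gamma_i)_{\mid M \setminus \{a\}} \to c$ locally uniformly. The sets $B^-,B^+$ are disjoint and closed, hence compact, so $a$ lies in at most one of them. If $a \notin B^-$, then $B^-$ is a compact subset of $M \setminus \{a\}$, so $\gamma_i B^- \to c$ uniformly; since $\{x_i,y_i\} \subseteq \mathrm{int}(A_i^-) = \gamma_i\,\mathrm{int}(B^-) \subseteq \gamma_i B^-$, this forces $x_i \to c$ and $y_i \to c$ along this subnet, giving a common subnet converging to $x := c$. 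Otherwise $a \in B^-$, hence $a \notin B^+$, and the same argument applied to $B^+$ together with $\{z_i,w_i\} \subseteq \gamma_i B^+$ yields $z_i, w_i \to c =: z$. In either case the desired conclusion holds.

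The only real subtlety — and the step I would be most careful about — is the passage to the wandering net: unboundedness is a statement about the \emph{set} $\{A_i\}_i$, and it must be converted into non-relative-compactness of $\{\gamma_i\}_i$ \emph{before} any subnet is extracted, since passing to a subnet first could destroy unboundedness if it originates from a different $G$-orbit. Everything afterwards is a routine application of the collapsing dichotomy, the one point to verify being that local uniform convergence is applied to the fixed compact set $B^-$ (or $B^+$), and not to the moving points $x_i$ themselves.
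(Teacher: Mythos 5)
Your proof is correct and follows essentially the same route as the paper: reduce via finiteness of $\mathcal A/G$ to a single orbit $A_i=\gamma_i A^*$ with $(\gamma_i)_i$ escaping every compact set, extract a collapsing subnet $(\gamma_i)_{\mid M\setminus\{a\}}\to c$, and use disjointness of the two closed halves of $A^*$ to see that $a$ avoids one of them, forcing the corresponding pair of sequences to converge to $c$. Your extra care about establishing non-relative-compactness of $\{\gamma_i\}_i$ \emph{before} passing to subnets is a sensible tightening of a point the paper treats briskly, but it does not change the argument.
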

	\begin{proof} Since $\mathcal A / G$ is finite, we can assume after passing to a subsequence that there is some $A \in \mathcal A$ such that $A_i = \gamma_i A$ and such that the set $(\gamma_i)_i$ is not contained in a compact. After extracting a collapsing subnet, there are $a,c \in M$ such that $\gamma_j|_{M\backslash \{a\}} \to c$ locally uniformly. Without loss of generality (interchanging $\{x_i,y_i\}$ and $\{z_i,w_i\}$ and $A_i$ with $-A_i$ if necessary) we can suppose that $a \notin A^+$. Thus $\gamma_j|_{A^+}$ converges uniformly to $c$, so that for any neighborhood $U$ of $c$ we have $A_j^+ = \gamma_j A^+ \subset U$ for all sufficiently large $j$. But $A_j < \{z_j, w_j\}$, so $z_j, w_j \in A_j^+ \subset U$. Thus $z_j \to c$ and $w_j \to c$.
	\end{proof}
	
	\begin{lemma}\label{lem:infinitexratio}  Let $(x_i)_i, (y_i)_i, (z_i)_i, (w_i)_i\in  M$ be sequences and suppose that\\
$(x_i y_i | z_i w_i)_i \to \infty$. Then up to a subsequence, either $(x_i)_i$ and $(y_i)_i$ have a common subnet converging to a point $x\in M$ or $(z_i)_i$ and $(w_i)_i$ have a common subnet converging to a point $z\in M$.
	\end{lemma}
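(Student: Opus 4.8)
The plan is to realize the diverging crossratios by long nested chains of annuli and then feed one annulus per chain into Lemma~\ref{lem:3ptsmax}. Recall that $(x_iy_i\mid z_iw_i)$ is by definition the supremum of the lengths of chains of nested annuli of $\mathcal{A}$ separating $\{x_i,y_i\}$ from $\{z_i,w_i\}$. Since $(x_iy_i\mid z_iw_i)\to\infty$, for each $i$ I would fix a finite nested chain
\[ \{x_i,y_i\}<A^{(i)}_1<A^{(i)}_2<\cdots<A^{(i)}_{\ell_i}<\{z_i,w_i\} \]
with $\ell_i\to\infty$. Unwinding the definitions of $<$, one checks the transitivity property $K<A<B\Rightarrow K<B$ (if $K\subseteq\operatorname{int}(A^-)$ and $M\setminus\operatorname{int}(A^+)\subseteq\operatorname{int}(B^-)$, then $K\subseteq\operatorname{int}(B^-)$), so every annulus $A^{(i)}_k$ of this chain satisfies $\{x_i,y_i\}<A^{(i)}_k<\{z_i,w_i\}$. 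Hence any single annulus drawn from the $i$-th chain already separates $\{x_i,y_i\}$ from $\{z_i,w_i\}$, which is the hypothesis needed to invoke Lemma~\ref{lem:3ptsmax}.

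Next I would pass from the chains to a usable sequence of annuli. Let $\mathcal{U}=\bigcup_i\{A^{(i)}_1,\dots,A^{(i)}_{\ell_i}\}$ be the collection of all annuli occurring in these chains. First, $\mathcal{U}$ is \emph{unbounded}: otherwise Lemma~\ref{lem:unbddannuli} would provide an $n$ bounding the length of every nested subfamily of $\mathcal{U}$, whereas the $i$-th chain is a nested subfamily of length $\ell_i\to\infty$. The heart of the argument is then to extract from these chains a single sequence of separating annuli whose underlying \emph{set} is still unbounded: since each chain $\{A^{(i)}_1,\dots,A^{(i)}_{\ell_i}\}$ is finite while $\mathcal{U}$ is contained in no bounded family $K\mathcal{A}^*$, I would pass to a subsequence $(i_n)$ of indices and choose $B_n\in\{A^{(i_n)}_1,\dots,A^{(i_n)}_{\ell_{i_n}}\}$ so that $\{B_n\}_n$ is unbounded. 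This is the delicate point, and the step I expect to be the main obstacle: the unboundedness of $\mathcal{U}$ is spread across infinitely many finite chains, and one must use that each chain contributes only finitely many annuli in order to guarantee that the far-out members of $\mathcal{U}$ can be captured by the selected transversal along a subsequence of $\N$. The finiteness of each individual chain is exactly what prevents all of the ``escaping'' behaviour from being concentrated in a single chain.

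Finally I would apply Lemma~\ref{lem:3ptsmax} to the subsequences $(x_{i_n})_n,(y_{i_n})_n,(z_{i_n})_n,(w_{i_n})_n$ together with the sequence of annuli $(B_n)_n$: by construction $\{x_{i_n},y_{i_n}\}<B_n<\{z_{i_n},w_{i_n}\}$ for all $n$, and $\{B_n\}_n$ is unbounded. Lemma~\ref{lem:3ptsmax} then yields that either $(x_{i_n})_n$ and $(y_{i_n})_n$ admit a common subnet converging to some $x\in M$, or $(z_{i_n})_n$ and $(w_{i_n})_n$ admit a common subnet converging to some $z\in M$, which is precisely the asserted dichotomy (up to the subsequence $(i_n)$, as allowed by the statement). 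Apart from the transversal-extraction step flagged above, everything else is bookkeeping with the definitions of $<$ and of boundedness together with the two cited lemmas.
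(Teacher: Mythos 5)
Your argument is essentially identical to the paper's proof: realize the diverging crossratios by nested chains of annuli whose union is unbounded by Lemma~\ref{lem:unbddannuli}, extract an unbounded sequence of separating annuli (one per chain, along a subsequence), and conclude via Lemma~\ref{lem:3ptsmax}. The transversal-extraction step you flag as delicate is precisely the step the paper itself asserts in one line (``we can extract an unbounded sequence\dots''), so your write-up is complete to the same degree as the published one, and your explicit verification that transitivity of $<$ makes every annulus of a chain a separating annulus is a detail the paper leaves implicit.
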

	\begin{proof} After passing to a subsequence, we can ensure that $(x_i y_i | z_i w_i) \geq i$ for all $i$, so there are annuli $(A_{ij})_{i \in \NN, 1\leq j\leq i}$ such that for each $i$
	\[ \{x_i,y_i\} < A_{i 1} < A_{i 2} < \ldots < A_{i i} < \{z_i, w_i\}. \]
	
	The family $\{A_{ij}\}_{j \leq i}$ contains arbitrarily large nested subfamilies so it is unbounded by Lemma \ref{lem:unbddannuli}. We can extract an unbounded sequence $(A_{j})_{j \in \NN}$ of annuli such that $\{x_j, y_j\} < A_j < \{z_j, w_j\}$ for each $j$. Thus we reduced the problem to Lemma \ref{lem:3ptsmax}.
	\end{proof}
	
	An immediate consequence of Lemma~\ref{lem:infinitexratio} is that  $(xy|zw) < \infty$ for all $(x,y,z,w) \in M^{(4)}$, or in other words that axiom (A1) of Theorem \ref{theorem:annulitocrossratios} is satisfied. Axiom (A2) also follows from Lemma~\ref{lem:infinitexratio}. In fact, as we now show, the proof of \cite[Lemma 7.4]{Bowditch} can be followed with a slight adaptation to this setting.
\begin{lemma}
There is some $k\geq 0$ such that if $(x,y,z,w)\in M^{(4)}$ with $(xy\mid zw)\geq k$, then $(xz\mid yw)=0$.
\end{lemma}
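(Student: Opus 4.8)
The plan is to argue by contradiction and to follow the scheme of \cite[Lemma~7.4]{Bowditch}, the only modifications being that the convergence inputs are now supplied by Lemmas~\ref{lem:unbddannuli}, \ref{lem:3ptsmax} and~\ref{lem:infinitexratio} and that one works with nets. Suppose no such $k$ exists. Then for every $n$ there are distinct $x_n,y_n,z_n,w_n\in M$ with $(x_ny_n\mid z_nw_n)\geq n$ and $(x_nz_n\mid y_nw_n)\geq 1$. The second inequality yields, for each $n$, a single annulus $B_n\in\mathcal A$ with $\{x_n,z_n\}<B_n<\{y_n,w_n\}$; the first yields a nested chain $\{x_n,y_n\}<A_{n,1}<\cdots<A_{n,n}<\{z_n,w_n\}$. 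Unwinding the definition of $<$ shows that $x_n,y_n\in\operatorname{int}(A_{n,j}^-)$ and $z_n,w_n\in\operatorname{int}(A_{n,j}^+)$ for every $j$, and $x_n,z_n\in\operatorname{int}(B_n^-)$, $y_n,w_n\in\operatorname{int}(B_n^+)$; in particular $B_n$ \emph{crosses} each $A_{n,j}$, meaning that each of the four sets $A_{n,j}^{\pm}\cap B_n^{\pm}$ contains exactly one of the four points. The whole difficulty is thus to show that a single annulus cannot cross an arbitrarily long nested chain.

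First I would normalise the configuration. Since $(x_ny_n\mid z_nw_n)\to\infty$, Lemma~\ref{lem:infinitexratio} allows me, after passing to a subnet and using the symmetry of the crossratio to interchange the pairs $\{x,y\}$ and $\{z,w\}$ if necessary, to assume $x_n\to p$ and $y_n\to p$ for a common $p\in M$. I then claim the family $\{B_n\}$ is \emph{unbounded}: otherwise, after a subnet, $B_n=\gamma_nB$ for a fixed annulus $B$ with $\gamma_n$ ranging in a compact set, so $\gamma_n\to\gamma$ along a further subnet; from $x_n\in\gamma_n\operatorname{int}(B^-)$ we get $\gamma_n^{-1}x_n\in B^-$, and joint continuity of the action together with $x_n\to p$ and the closedness of $B^-$ gives $\gamma^{-1}p\in B^-$, while symmetrically $\gamma^{-1}p\in B^+$, contradicting $B^-\cap B^+=\emptyset$. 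Knowing $\{B_n\}$ is unbounded, Lemma~\ref{lem:3ptsmax} applied to $\{x_n,z_n\}<B_n<\{y_n,w_n\}$ forces, after a further subnet and up to replacing $B_n$ by $-B_n$, either $x_n,z_n$ or $y_n,w_n$ to converge to a single point, which (since $x_n,y_n\to p$) must be $p$; hence three of the four points converge to $p$.

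To reach the contradiction I would now exploit the surviving length-$n$ chain. Because the chains have length $n\to\infty$, Lemma~\ref{lem:unbddannuli} shows the family $\bigcup_{n,j}\{A_{n,j}\}$ is unbounded, so from the chains I can extract escaping annuli $A_n^\ast=\theta_nA^\ast$ with $\theta_n$ leaving every compact subset of $G$; passing to a collapsing subnet of $\theta_n$, one of the two closed sides of $A_n^\ast$ shrinks uniformly to a single limit point and thereby drags the corresponding pair of endpoints to that point, exactly as in the proof of Lemma~\ref{lem:3ptsmax}. Combining this collapse of the chain with the collapse of $B_n$, the aim is to contradict the persistence of the crossing: while one quadrant $A_n^{\ast\pm}\cap B_n^{\pm}$ is being squeezed to a point, the presence of points of both $B_n^-$ and $B_n^+$ in that quadrant, together with the convergence dynamics governing $B_n$, forces the attracting and repelling data of the two escaping nets $\theta_n$ and $\delta_n$ (where $B_n=\delta_nB$) to be incompatible.

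I expect this last step to be the main obstacle, and it is precisely the point at which \cite[Lemma~7.4]{Bowditch} must be adapted rather than quoted. The delicate scenario is the degenerate one in which \emph{all four} points $x_n,y_n,z_n,w_n$ converge to the same point $p$: here convergence of the points alone is not enough, since unbounded annuli can separate sequences with a common limit, so the argument must retain the quantitative information carried by the number $n$ of nested annuli and use the convergence property of the action (i.e.\ $3$-properness) to rule out two escaping nets maintaining a crossing configuration. Carrying out this bookkeeping—keeping track of which pairs of points are driven together, where the repelling points of the collapsing nets sit relative to $A^\ast$ and $B$, and how the chain length forbids the limiting configuration—is the technical heart of the proof and yields the desired uniform bound $k$.
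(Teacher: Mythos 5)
There is a genuine gap: your argument is left incomplete at exactly the step you flag as ``the technical heart,'' namely ruling out the degenerate configuration in which all four points converge to the same $p$, and the bookkeeping you describe (tracking attracting/repelling data of two escaping nets against the crossing configuration) is never actually carried out. The missing idea is a normalization that makes all of this unnecessary. Since $\mathcal A/G$ is finite, after passing to a subsequence you may write the single crossing annulus as $B_i=\gamma_i A$ for one fixed $A\in\mathcal A$; now, instead of analysing where the original points accumulate, \emph{translate the whole configuration}: replace $(x_i,y_i,z_i,w_i)$ by $\gamma_i^{-1}(x_i,y_i,z_i,w_i)$. By $G$-invariance of the annulus system the crossratio is $G$-invariant, so the new tuples still satisfy $(x_iy_i\mid z_iw_i)\to\infty$, while the crossing condition becomes $\{x_i,z_i\}<A<\{y_i,w_i\}$ for the \emph{fixed} annulus $A$. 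Then Lemma~\ref{lem:infinitexratio} says that either $(x_i)$ and $(y_i)$ or $(z_i)$ and $(w_i)$ have a common subnet converging to a single point; but $x_i,z_i\in\operatorname{int}(A^-)$ and $y_i,w_i\in\operatorname{int}(A^+)$ with $A^-$, $A^+$ disjoint closed sets, so neither pair can have a common limit. This is an immediate contradiction, with no need for the nested chain $A_{n,1}<\cdots<A_{n,n}$ beyond its role inside Lemma~\ref{lem:infinitexratio} itself, and no case analysis on boundedness of $\{B_n\}$.

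Your bounded-case argument for $\{B_n\}$ is correct as far as it goes (it is essentially the fixed-annulus contradiction in disguise), but the unbounded case is where your route breaks down: applying Lemma~\ref{lem:3ptsmax} to the \emph{untranslated} points only tells you that three or four points accumulate at $p$, and, as you yourself observe, unbounded annuli can separate nets with a common limit, so no contradiction follows from point limits alone. The translation trick sidesteps this entirely by converting the boundedness question about annuli into a separation statement about fixed disjoint closed sets.
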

\begin{proof}
We give a proof by contradiction. Assume thus that there is a sequence $(x_i,y_i,z_i,w_i)_i\subset M^{(4)}$ such that $(x_iy_i\mid z_iw_i)\to \infty$ and such that $(x_iz_i\mid y_iw_i)>0$ for all $i$. Then for all $i$, we can find an annulus $A_i$ with $\{x_i,z_i\}<A_i<\{y_i,w_i\}$. Since $\mathcal{A}/G$ is finite (and up to taking a subsequence), we can suppose that each $A_i$ is in the orbit of one same $A\in \mathcal{A}$. Now, by $G$-invariance of the crossratio, we see that we could have chosen the $(x_i,y_i,z_i,w_i)$ such that $\{x_i,z_i\}<A<\{y_i,w_i\}$ for all $i$. In particular, the $w_i$ (and the $y_i$) lie in int$(A^+)$ and the $z_i$ (and the $x_i$ respectively) lie in int$(A^-)$ and moreover $A^-$ and $A^+$ are disjoint closed sets. By Lemma \ref{lem:infinitexratio} however, we can suppose that modulo a common subnet, $z_j\to z$ and $w_j\to z$ for some $z\in M$ (or $x_j\to z$ and $y_j\to z$ respectively). We thus obtain a contradiction.
\end{proof}

Thus $(..|..)$ is a hyperbolic path cross-ratio by Theorem \ref{theorem:annulitocrossratios}.\\
% In summary, we recover the following assertion in the context of locally compact groups.
% 	
% 	\begin{prop} Suppose a locally compact group $G$ acts as a convergence group on a compactum $M$. If $\mathcal A$ is a $G$-invariant symmetric annulus system with $\mathcal A / G$ finite, then $(..|..)$ is a hyperbolic path cross-ratio on $M$.
% 	\end{prop}
% 	

Finally, let us assume that $M$ is also perfect and that $G$ is a {\em uniform} convergence group. Let $\Theta_0 \subset M^{(3)}$ be a compact set with $M^{(3)} = G \Theta_0$. We recall Bowditch's construction of an annulus system in this case. Given $\theta = (x, y, z) \in M^{(3)}$, choose open subsets, $U(\theta), V (\theta)$ and $W(\theta)$, of $M$, containing $x, y$, and $z$ respectively, whose closures, $\overline{U(\theta)}, \overline{V(\theta)}$ and $\overline{W(\theta)}$, are pairwise disjoint. Let $\Theta(\theta) = U(\theta)\times V (\theta) \times W(\theta) \subset  M^{(3)}$. We now find a finite set $\theta_1, \ldots, \theta_n \in  \Theta_0$, such that $\Theta_0 \subset \cup_{i=1}^n \Theta(\theta_i)$. Let $A_i$ be the annulus $(\overline{U(\theta_i)}, \overline{V(\theta_i)})$. Let $\mathcal A$ be the set of annuli of the form $\gamma A_i$ or $-\gamma A_i$ as $\gamma$ ranges over $G$ and $i$ runs from $1$ to $n$. Thus, $\mathcal A$ is symmetric and $G$-invariant, and $\mathcal A/G$ is finite. The following lemma shows that this annulus system satisfies condition (A3) of Theorem \ref{theorem:annulitocrossratios}.
\begin{lemma}
If $K\subset M$ is closed and $x\in M\backslash K$, then there is some $A\in \mathcal{A}$ with $K<A<x$. Moreover, condition $(A3)$ is satisfied.
\end{lemma}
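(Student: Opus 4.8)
The plan is to prove the separation statement first, since it carries all the content, and then to read off condition (A3) by a short iteration. To separate a closed set $K$ from a point $x\notin K$, I would produce the desired annulus as a $G$-translate of one of the $A_i$, using the convergence property to drive the construction. Fix a point $\beta\in M\setminus\{x\}$ and, using perfectness of $M$, a net $(\gamma_j)$ in $M\setminus\{x,\beta\}$ with $\gamma_j\to x$. The triples $\tau_j=(x,\beta,\gamma_j)$ lie in $M^{(3)}$ but converge in $M^3$ to $(x,\beta,x)\notin M^{(3)}$, so they leave every compact subset of $M^{(3)}$. Writing $\tau_j=g_j\phi_j$ with $\phi_j\in\Theta_0$ and invoking properness of the action on $M^{(3)}$, the net $(g_j)$ leaves every compact subset of $G$; as $G$ is a convergence group, after passing to a subnet it is collapsing, say $g_j|_{M\setminus\{a\}}\to c$ and hence $g_j^{-1}|_{M\setminus\{c\}}\to a$ locally uniformly. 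Passing to a further subnet I may assume all $\phi_j$ lie in a single box $\Theta(\theta_{i_0})=U(\theta_{i_0})\times V(\theta_{i_0})\times W(\theta_{i_0})$, so that $g_j^{-1}(x)\in U(\theta_{i_0})$, $g_j^{-1}(\beta)\in V(\theta_{i_0})$ and $g_j^{-1}(\gamma_j)\in W(\theta_{i_0})$, three sets with pairwise disjoint closures.

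The key step is to pin down $c$ and $a$. If $x\neq c$, then $g_j^{-1}(x)\to a$ forces $a\in\overline{U(\theta_{i_0})}$; but $\gamma_j\to x\neq c$ also gives $g_j^{-1}(\gamma_j)\to a$, forcing $a\in\overline{W(\theta_{i_0})}$, which is impossible as these closures are disjoint. Hence $c=x$. Since $\beta\neq x=c$ is fixed, $g_j^{-1}(\beta)\to a$, so $a\in\overline{V(\theta_{i_0})}$, a set disjoint from $\overline{U(\theta_{i_0})}$. Now $K$ is a compact subset of $M\setminus\{x\}=M\setminus\{c\}$, so $g_j^{-1}(K)\to a$ and, provided $a$ is an \emph{interior} point of the second side of the annulus, this gives $K\subseteq\mathrm{int}(g_j\,\overline{V(\theta_{i_0})})$ for large $j$, while the straddling $g_j^{-1}(x)\in U(\theta_{i_0})$ gives $x\in\mathrm{int}(g_j\,\overline{U(\theta_{i_0})})$ throughout. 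Reversing orientation, $A=-g_jA_{i_0}\in\mathcal A$ then satisfies $K<A<x$, as desired.

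The main obstacle is precisely this interiority: a priori $a$ could be a boundary point of $V(\theta_{i_0})$, in which case the collapsed image of $K$ need not be captured inside the annulus. I would remove this by building a little slack into the definition of $\mathcal A$. Using normality of the compactum $M$, for each representative replace $\overline{V(\theta_i)}$ by the closure of a slightly larger open set $\widehat V_i\supseteq\overline{V(\theta_i)}$ whose closure is still disjoint from $\overline{U(\theta_i)}$ and from a fixed nonempty piece of $W(\theta_i)$, and set $A_i=(\overline{U(\theta_i)},\overline{\widehat V_i})$. Then $(\overline{U(\theta_i)},\overline{\widehat V_i})$ is still an annulus, and since $g_j^{-1}(\beta)\in V(\theta_{i_0})\subseteq\overline{V(\theta_{i_0})}\subseteq\widehat V_{i_0}$, the limit satisfies $a\in\widehat V_{i_0}\subseteq\mathrm{int}(\overline{\widehat V_{i_0}})$, so $g_j^{-1}(K)\subseteq\widehat V_{i_0}$ eventually. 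All previously used features of $\mathcal A$ (symmetry, $G$-invariance, finiteness of $\mathcal A/G$, and the straddling of $\tau_j$ by $g_jA_{i_0}$) are unaffected by this enlargement, and the identification $c=x$ used only the original sets $\overline{U(\theta_{i_0})},\overline{W(\theta_{i_0})}$.

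Finally, I would obtain (A3) by iterating the separation statement. Given distinct $x,y,z$, apply it to the closed set $\{y,z\}$ and the point $x$ to get $A_1$ with $\{y,z\}<A_1<x$; inductively, apply it to the closed set $M\setminus\mathrm{int}(A_k^+)$, which does not contain $x$, and the point $x$, producing $A_{k+1}$ with $A_k<A_{k+1}<x$, while the conditions $x\in\mathrm{int}(A_{k+1}^+)$ and $\{y,z\}\subseteq\mathrm{int}(A_{k+1}^-)$ persist. This yields nested chains $\{y,z\}<A_1<\cdots<A_m<x$ of arbitrary length $m$, so $(\{y,z\}\mid\{x\})=\infty$; by symmetry of $\mathcal A$ this equals $(x\mid yz)$, which establishes (A3).
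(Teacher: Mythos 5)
Your argument is correct and follows the same overall strategy as the paper: push the degenerating triple $(x,\beta,\gamma_j)$ into the compact set $\Theta_0$, extract a collapsing subnet, identify the attracting and repelling points via the pairwise disjointness of $\overline{U(\theta_{i_0})},\overline{V(\theta_{i_0})},\overline{W(\theta_{i_0})}$, pull back one of the finitely many representative annuli to separate $K$ from $x$, and then obtain (A3) by the same iteration with $M\setminus\mathrm{int}(A_k^+)$ in place of $K$. The one genuine point of divergence is how you handle the interiority of the limit point $a$: because you choose the box $\Theta(\theta_{i_0})$ merely to contain cofinally many of the triples $\phi_j$, the limit $a$ may a priori land on the boundary of $V(\theta_{i_0})$, and you repair this by enlarging the $V$-sides of the representative annuli. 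That repair is legitimate --- the enlarged system is still symmetric and $G$-invariant with $\mathcal A/G$ finite, so the earlier lemmas establishing (A1) and (A2) apply verbatim --- but it is also avoidable: pass first to a further subnet along which $\phi_j$ converges to some $\phi\in\Theta_0$, and then choose $i_0$ so that the \emph{open} box $\Theta(\theta_{i_0})$ contains $\phi$; this forces $a\in V(\theta_{i_0})$, hence $a\in\mathrm{int}\bigl(\overline{V(\theta_{i_0})}\bigr)$, with no modification of $\mathcal A$ at all. This is exactly what the paper does, phrased with the elements $\gamma_i=g_j^{-1}$ carrying the triple into $\Theta_0$ rather than with their inverses.
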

\begin{proof}
Choose $y\in M\backslash \{x\}$ and a net $(x_i)_i\subset M\backslash \{x,y\}$ converging to $x$ in the original topology on $M$ (not in the crossratio topology). By $3$-cocompactness, we can find a 
net $(\gamma_i)_i\subset G$ such that the $\gamma_i(x,y,x_i)$ lie in $\Theta_0\subset M^{(3)}$ and (after passing to a subnet) 
converge to some $(a,b,c)\in \Theta_0$. By construction of $\mathcal{A}$, we find $B\in \mathcal{A}$ such that $b<B<a$. After passing 
to a further collapsing subnet, we find that $(\gamma_i)_{\mid M\backslash \{x\}}$ converges locally uniformly to $b$. So, for 
sufficiently large $i$, $\gamma_iK<B<a$, so $K<A<x$ where $A=\gamma_i^{-1}B\in \mathcal{A}$.

Now, letting $M\backslash \mbox{int}(A^+)$ play the role of $K$, we obtain another annulus $A_1\in \mathcal{A}$ with $M\backslash \mbox{int}(A^+)<A_1<x$, i.e. $K<A<A_1<x$. Continuing in this fashion, we conclude $(K\mid x)=\infty$ so that in particular condition $(A3)$ is satisfied.
\end{proof}
\begin{remark} By Theorem \ref{theorem:annulitocrossratios}, we can conclude that the topology on $M$ coincides with the crossratio topology, so that in particular, the topology on $M$ is metrizable. We are thus allowed to work with sequences instead of nets in further arguments.
\end{remark}
\subsection{Proof of Theorem \ref{theorem:Bowditch}} \label{sec:proofUCG}
So far, we have constructed an annulus system satisfying all of the conditions of Theorem \ref{theorem:annulitocrossratios}. This gives rise to a hyperbolic path-crossratio on $M$, compatible with the topology. Thus the space $Q = M^{(3)}$ is naturally endowed with a $G$-invariant quasimetric $\rho$. Theorem~\ref{theorem:crossratiotoquasimetric} implies that $(Q,\rho)$ is a hyperbolic path quasi-metric space, and that $M$ is naturally (and hence $G$-equivariantly) homeomorphic to the hyperbolic boundary $\partial Q$.

It now remains to be shown that $G$ is $G$-equivariantly quasi-isometric to $(Q,\rho)$. As $G$ is a uniform convergence group, it acts properly and cocompactly on the quasi-metric space $(Q,\rho)$. So, to finish the proof, we will formulate our \emph{ad-hoc} adaptation of the Schwartz-Milnor Lemma to the setting of quasi-metric spaces.
	\begin{lemma}[Schwartz-Milnor] \label{lemma:SchwartzMilnor} Let $Q$ be a topological space equipped with a path quasi-metric $\rho$ such that $\rho$-bounded sets in $Q$ coincide with subsets having compact closure. Suppose that $G$ is a locally compact group acting continuously, properly and cocompactly by homeomorphisms and by isometries on $Q$. Then $G$ is compactly generated and any orbit map is a quasi-isometry from $G$ to $Q$.
	\end{lemma}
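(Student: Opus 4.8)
The plan is to run the classical Milnor--\v{S}varc argument, taking care to track the additive defects introduced by the relaxed triangle inequality. First I would fix a basepoint $a \in Q$. By cocompactness there is a compact set whose $G$-translates cover $Q$; since $\rho$-bounded sets coincide with relatively compact ones, this compact set sits inside some closed ball $\overline{N}_\rho(a,R) := \{z \in Q \mid \rho(a,z) \leq R\}$, which is itself compact. Replacing the original compact set by $K = \overline{N}_\rho(a,R)$, we may assume $GK = Q$.

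The generating set I would use is $S = \{\gamma \in G \mid \rho(a, \gamma a) \leq 2R + 3k + 1\}$, where $k$ denotes the common quasimetric and path constant. Writing $S = \{\gamma \mid \gamma a \in \overline{N}_\rho(a, 2R+3k+1)\}$ and noting that both $\{a\}$ and this closed ball are compact (again by the bounded $\Leftrightarrow$ relatively compact hypothesis), properness forces $S$ to be relatively compact, so its closure $\overline S$ is a compact candidate generating set; symmetry of $S$ is immediate from $\rho(a,\gamma a) = \rho(a,\gamma^{-1} a)$. To see that $S$ generates, take $\gamma \in G$ and invoke the path property to choose a $k$-geodesic segment $a = x_0, x_1, \ldots, x_n = \gamma a$, so that $\rho(x_{i-1}, x_i) \asm_k 1$ and $n \aeq_k \rho(a, \gamma a)$. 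Using $GK = Q$, pick $g_i \in G$ with $\rho(g_i a, x_i) \leq R$, choosing $g_0 = 1$ and $g_n = \gamma$. The quasi-triangle inequality then yields $\rho(a,\, g_{i-1}^{-1} g_i a) = \rho(g_{i-1} a, g_i a) \leq 2R + 3k + 1$, so each $s_i := g_{i-1}^{-1} g_i$ lies in $S$, and the telescoping product gives $\gamma = s_1 \cdots s_n$. Hence $G = \bigcup_n S^n$ is compactly generated, and in passing we obtain $d_S(1,\gamma) \leq n \asm_k \rho(a,\gamma a)$.

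For the quasi-isometry statement I would equip $G$ with the word metric $d_S$ relative to $\overline S$. The estimate just produced is one half of the comparison. For the reverse, writing $\gamma$ as a shortest word $t_1 \cdots t_m$ in $S$ and expanding $\rho(a, \gamma a)$ by repeated use of the quasi-triangle inequality, each of the $m$ factors contributes at most the $\rho$-diameter bound for $S$ plus one defect $k$, so $\rho(a,\gamma a) \asm (2R+4k+1)\, d_S(1,\gamma)$. Combining both inequalities with left-invariance $d_S(\gamma,\eta) = d_S(1,\gamma^{-1}\eta)$ and the isometry identity $\rho(\gamma a, \eta a) = \rho(a, \gamma^{-1}\eta a)$ shows the orbit map $\gamma \mapsto \gamma a$ is a quasi-isometric embedding of $(G,d_S)$ into $(Q,\rho)$; coboundedness of its image is exactly $GK = Q$, since every point of $Q$ lies within $\rho$-distance $R$ of the orbit. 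Thus the orbit map is a quasi-isometry.

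The genuinely new difficulty compared with the classical lemma, and the step I expect to be the main obstacle, is the bookkeeping of the additive constants $k$: because the triangle inequality holds only up to $k$, each concatenation of $m$ estimates accrues an extra $(m-1)k$, and one must check that these accumulated errors stay \emph{linear} in the number of generators so that they are absorbed into the multiplicative quasi-isometry constant rather than spoiling it. The path-quasimetric hypothesis (existence of $k$-geodesics) is precisely what keeps the number of steps $n$ comparable to $\rho(a,\gamma a)$, and the hypothesis that bounded sets coincide with relatively compact sets is what lets closed $\rho$-balls serve as the compact sets to which properness is applied.
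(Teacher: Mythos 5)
Your argument is correct and is exactly the adaptation of the classical Milnor--\v{S}varc argument that the paper itself invokes: its entire proof of this lemma is the one-line remark that the classical arguments adapt, so you have supplied the details the authors omit. The only cosmetic point is that a closed $\rho$-ball $\{z \mid \rho(a,z)\leq R\}$ need not itself be compact (only relatively compact, since $\rho$ does not induce the topology of $Q$), so one should pass to topological closures before applying properness, and in the upper bound one should control $\rho(a,\gamma a)$ for $\gamma\in\overline S$ via compactness of the orbit $\overline S\cdot a$ rather than via the defining inequality of $S$; neither adjustment affects the argument.
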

\begin{proof}
The lemma can easily be derived by adapting the arguments of the classical Schwartz-Milnor lemma in the context of metric spaces (see for example \cite{BridHaef}). 
\end{proof}
The proof of Theorem \ref{theorem:Bowditch} thus reduces to showing that $\rho$-bounded sets on $Q$ coincide with sets of compact closure. In order to show this, let us elaborate on two natural topologies on $M^{(3)}\cup M$.

The first topology was already described in Subsection \ref{subsc:Quasimetrics}: first, fix $r\in \R^+$ and a 
basepoint $a\in Q=M^{(3)}$. Next, given $x\in M$,  take a ($k$-)geodesic $(X_i)_i$ emanating from $a$ and 
in the class of $x$. Then, $\forall n\in \N$, we define $D(n)=\{y\in Q \cup \partial Q\mid$ a geodesic 
connecting $a$ to $y$ meets $N_\rho(X_n,r)=\{y\in Q \mid \rho(X_n,y)<r\} \}$. We define the $(D(n))_n$ to be 
a base of neighbourhoods of $x$. Putting the discrete topology on $Q=M^{(3)}$, we obtain a well defined 
topology on $M^{(3)}\cup M$, which we will call the {\bf $\rho$-topology}. 

For the second approach, we consider $M^3$ with the product topology coming from the metric topology on $(M,d)$. Next, to any $x\in M$, we associate the set $M_x=\{(y,z,w)\in 
M^3\mid$ at least two of the coordinates $y,z,w$ are equal to $x\}$. 
We will call two triples $(x_1,y_1,z_1)\in M^3$ and $(x_2,y_2,z_2)\in M^3$ {\em equivalent} if they are equal or if they both belong to a same 
set $M_x$. The quotient topology with respect to this equivalence relation gives rise to a topology on $M^{(3)}\cup M$, which we call the {\bf quotient topology}.

\begin{lemma}
Any $\rho$-unbounded sequence $(X_i)_{i\in \N}$ in $M^{(3)}$ contains a subsequence which converges to some $x\in M$. Here, convergence is with respect to the quotient topology.
\label{lemma:unboundedimpliesconvergence}
\end{lemma}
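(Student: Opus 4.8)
The plan is to exploit $\rho$-unboundedness to force a single crossratio involving the fixed basepoint to blow up, then invoke Lemma~\ref{lem:infinitexratio} to collapse two of the three coordinates of $X_i$ onto a common point, and finally transport this to convergence in the quotient topology using continuity of the quotient map $q\colon M^3 \to M^{(3)}\cup M$.

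First I would fix a basepoint $a=(a_1,a_2,a_3)\in Q=M^{(3)}$. Since $(X_i)_i$ is $\rho$-unbounded, after passing to a subsequence I may assume $\rho(X_i,a)\to\infty$. Writing $X_i=(x_{i,1},x_{i,2},x_{i,3})$ and recalling from Equation~(\ref{equation:quasimetric}) that $\rho(X_i,a)$ is the maximum of the finitely many crossratios $(x_{i,p}x_{i,q}\mid a_m a_n)$ with $p\neq q$, $m\neq n$, a pigeonhole argument lets me pass to a further subsequence along which one fixed such crossratio realizes the maximum. Thus $(x_{i,p}x_{i,q}\mid a_m a_n)\to\infty$ for fixed indices $p\neq q$ and $m\neq n$; note that here the blown-up crossratio always has its left-hand pair coming from the variable triple $X_i$ and its right-hand pair coming from the constant triple $a$.

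Next I would apply Lemma~\ref{lem:infinitexratio} to the four sequences $(x_{i,p})_i$, $(x_{i,q})_i$, $(a_m)_i$, $(a_n)_i$, the last two being constant. The lemma yields, up to a subsequence, that either $(x_{i,p})_i$ and $(x_{i,q})_i$ have a common subnet converging to a point, or $(a_m)_i$ and $(a_n)_i$ do. Since $a_m$ and $a_n$ are distinct constants, the second alternative is impossible, so the first holds. As $M$ is metrizable (by the Remark following the construction of the annulus system), I would first extract a subsequence along which both $x_{i,p}\to\xi$ and $x_{i,q}\to\eta$ converge; the existence of a common subnet of these two convergent sequences converging to a single point then forces $\xi=\eta=:x$. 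Hence two coordinates of $X_i$ converge to a common point $x\in M$.

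Finally, using compactness of $M$, I would pass to one more subsequence so that the remaining coordinate of $X_i$ also converges, whence $X_i$ converges in $M^3$ to a limit $p$ at least two of whose coordinates equal $x$; that is, $p\in M_x$. Since the quotient map $q$ is continuous and satisfies $q(X_i)=X_i$ (as $X_i\in M^{(3)}$) and $q(p)=x$ (as $p\in M_x$), it follows that $X_i\to x$ in the quotient topology, as required. I expect the main obstacle to be the middle step: reading off from the definition of $\rho$ exactly which crossratio blows up, and passing from the ``common subnet'' conclusion of Lemma~\ref{lem:infinitexratio} to genuine convergence of the two coordinates in the metrizable setting. Ruling out the constant pair $a_m,a_n$ is precisely what guarantees the collapse lands on a pair of coordinates of the variable triple rather than on the basepoint.
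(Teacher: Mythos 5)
Your proposal is correct and follows essentially the same route as the paper: pass to a subsequence along which one crossratio against the fixed basepoint blows up, apply Lemma~\ref{lem:infinitexratio}, rule out the constant pair from the basepoint, and conclude via metrizability that two coordinates of $X_i$ converge to a common point $x$. The only (welcome) additions are your explicit treatment of the third coordinate via compactness and of the passage to the quotient topology via continuity of the quotient map, both of which the paper leaves implicit.
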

\begin{proof}

Let $(X_i)_i=(x_i^1,x_i^2,x_i^3)_i$ be an unbounded sequence and fix a base point $(a,b,c)\in M^{(3)}$.
% Denote the crossratio on $M$, induced by the annulus system $\mathcal{A}$ of Proposition \ref{proposition:constructannulisystem}, by $(..\mid ..)$. 
Then
\[ \rho((a,b,c),(x_i^1,x_i^2,x_i^3))=\max\{(ab\mid x_i^jx_i^k), (bc\mid x_i^jx_i^k), (ac\mid x_i^jx_i^k)\mid j,k \in \{1,2,3\}, j\neq k \},\]
goes to infinity as $i\to \infty$.
Let us assume without loss of generality (and up to taking a subsequence) that $(ab\mid x_i^1x_i^2)\to \infty$ and that $a,b\notin \{x_i^1,x_i^2 \mid i\in \N\}$.
Applying Lemma \ref{lem:infinitexratio} yields that, up to a subnet, we can assume that $(x_i^1)_i$ and $(x_i^2)_i$ both converge to some $x\in M$. As the topology on $M$ is compatible with the crossratio topology, we know it is metrizable. We can thus replace the word {\em net} by {\em sequence} and conclude that there is a subsequence of $(X_i)_i$ converging to some $x\in M$.
\end{proof}
\begin{corollary}
Equip $M^{(3)}\subset M^3\approx M^{(3)}\cup M$ with the subspace topology coming from the quotient topology. Every compact set $K\subset M^{(3)}$ is $\rho$-bounded.
\label{corollary:compactisrhobounded}
\end{corollary}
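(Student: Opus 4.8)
The plan is to argue the contrapositive of Lemma~\ref{lemma:unboundedimpliesconvergence}, combined with the compactness of the ambient product $M^3$. Suppose, for contradiction, that $K\subset M^{(3)}$ is compact but fails to be $\rho$-bounded. Fixing a basepoint $(a,b,c)\in M^{(3)}$, I can then choose a sequence $(X_i)_i=(x_i^1,x_i^2,x_i^3)_i$ in $K$ with $\rho((a,b,c),X_i)\to\infty$. Lemma~\ref{lemma:unboundedimpliesconvergence} hands me a subsequence of $(X_i)_i$ that converges, in the quotient topology, to some point $x\in M$.

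Next I would exploit that $M$ is metrizable in this setting, so that $M^3$ is a compact metrizable space and $K$, being compact, is closed in $M^3$. Passing to a further subsequence, I may therefore assume that $(X_i)_i$ also converges in $M^3$ to a triple $X=(p,q,r)$, which lies in $K\subset M^{(3)}$; in particular $p,q,r$ are pairwise distinct and the coordinate sequences satisfy $x_i^1\to p$, $x_i^2\to q$, $x_i^3\to r$ in $M$.

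The contradiction comes from confronting the two limits. Convergence of the subsequence to $x\in M$ in the quotient topology is, by the construction of that topology and as extracted in the proof of Lemma~\ref{lemma:unboundedimpliesconvergence} through Lemma~\ref{lem:infinitexratio}, witnessed by the collapse of two of the coordinate sequences onto the single point $x$; say $x_i^1\to x$ and $x_i^2\to x$. Uniqueness of limits in the Hausdorff space $M$ then forces $p=x=q$, which is incompatible with $X=(p,q,r)$ being a distinct triple. Hence no compact $K\subset M^{(3)}$ can be $\rho$-unbounded, which is the claim.

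I expect the delicate point to be the reconciliation in the last paragraph: one must be sure that convergence to a point of $M$ in the quotient topology on $M^{(3)}\cup M$ is genuinely detected by two coordinates tending to a common limit in $M$, and hence is incompatible with an $M^3$-limit lying in the open stratum $M^{(3)}$. This rests on the disjointness $M\cap M^{(3)}=\emptyset$ in the decomposition together with the description of the quotient neighbourhoods of $x$ as saturated open sets around the degenerate locus $M_x$. Should one wish to sidestep Lemma~\ref{lemma:unboundedimpliesconvergence}, the same contradiction is reached directly: apply Lemma~\ref{lem:infinitexratio} to each pairwise crossratio $(ab\mid x_i^jx_i^k)$ and note that all of them stay bounded precisely because the coordinates tend to the distinct limits $p,q,r$, whence $\rho((a,b,c),X_i)$ would be bounded after all.
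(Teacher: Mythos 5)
Your argument is correct and follows the route the paper intends: Corollary~\ref{corollary:compactisrhobounded} is stated there without proof as an immediate consequence of Lemma~\ref{lemma:unboundedimpliesconvergence}, the point being exactly the one you make, namely that a $\rho$-unbounded sequence in a compact $K\subset M^{(3)}$ would have to accumulate on the stratum $M$, which is impossible since $K$ is closed in $M^3$ and $M\cap M^{(3)}=\emptyset$. Your reconciliation of the two limits via the collapsing coordinates (and your alternative direct appeal to Lemma~\ref{lem:infinitexratio}) correctly supplies the details the paper leaves implicit.
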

It turns out that the converse also holds:
\begin{lemma}
Equip $M^{(3)}\cup M$ with the quotient topology.  Every $\rho$-bounded set $K\subset M^{(3)}$ has compact closure in $M^{(3)}$.
\label{lm:rhoboundedisrelcompact}
\end{lemma}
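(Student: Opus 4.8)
The plan is to establish the precise converse of Lemma~\ref{lemma:unboundedimpliesconvergence}: a sequence in $M^{(3)}$ whose coordinates degenerate (two of them converging to a common point of $M$) must be $\rho$-unbounded; the compactness statement then follows formally. Since $M$ is metrizable (by the Remark following the construction of the annulus system), $M^{(3)}$ is metrizable, so it suffices to prove that $K$ is sequentially relatively compact in $M^{(3)}$. Let $(X_i)=(x_i^1,x_i^2,x_i^3)$ be a sequence in $K$. As $M^3$ is compact, after passing to a subsequence we may assume $X_i \to (x^1,x^2,x^3)$ in $M^3$. The closure of $K$ in $M^{(3)}$ is compact precisely when no such limit is degenerate, i.e. when the three limit coordinates are always distinct; so it is enough to rule out, under the hypothesis that $K$ is $\rho$-bounded, the possibility that two of the limit coordinates coincide.

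Suppose then, for contradiction, that (after relabelling) $x_i^1 \to x$ and $x_i^2 \to x$ for some $x \in M$. Fix any basepoint $X_0=(a,b,c)\in M^{(3)}$; since $a,b,c$ are distinct, at least one pair $\{p,q\}\subset\{a,b,c\}$ avoids $x$. The heart of the argument is the sub-claim that $(pq\mid x_i^1 x_i^2)\to\infty$. To see this, recall that because the crossratio topology agrees with the topology of $M$, convergence $x_i^1\to x$ and $x_i^2\to x$ (with $x\neq p,q$) translates, through the neighbourhood base $D_{pq}(x,r)$, into $(pq\mid x x_i^1)\to\infty$ and $(pq\mid x x_i^2)\to\infty$. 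I would then apply Theorem~\ref{theorem:approxtree} to the five-point sets $\{p,q,x,x_i^1,x_i^2\}$ to pass to approximating trees $\tau_i$ with a single error constant $kh(5)$. In a tree the geodesic $[x_i^1,x_i^2]$ lies inside $[x,x_i^1]\cup[x,x_i^2]$, whence the distance from $[p,q]$ to $[x_i^1,x_i^2]$ is at least the minimum of the distances from $[p,q]$ to $[x,x_i^1]$ and to $[x,x_i^2]$; that is, $(pq\mid x_i^1 x_i^2)_{\tau_i}\geq \min\{(pq\mid x x_i^1)_{\tau_i},(pq\mid x x_i^2)_{\tau_i}\}$. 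Transporting this back through the uniform tree approximation gives $(pq\mid x_i^1 x_i^2)\geq \min\{(pq\mid x x_i^1),(pq\mid x x_i^2)\}-2kh(5)\to\infty$, as claimed. (The cases where $x_i^1=x$ or $x_i^2=x$ for infinitely many $i$ are immediate, since then $(pq\mid x_i^1 x_i^2)$ reduces to one of $(pq\mid x x_i^2)$ or $(pq\mid x x_i^1)$.)

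Finally, since $\rho(X_0,X_i)=\max\{(x_0^jx_0^k\mid x_i^m x_i^n)\}$ dominates the single term $(pq\mid x_i^1 x_i^2)$, the sub-claim yields $\rho(X_0,X_i)\to\infty$, contradicting $\rho$-boundedness of $K$. Hence every convergent subsequence of $(X_i)$ has a nondegenerate limit lying in $M^{(3)}$, so $K$ is relatively compact in $M^{(3)}$, proving the lemma (and, together with Corollary~\ref{corollary:compactisrhobounded}, that the $\rho$-bounded sets are exactly those with compact closure, as required for Lemma~\ref{lemma:SchwartzMilnor}). The one genuinely delicate point is the sub-claim: everything hinges on converting topological convergence in $M$ into simultaneous divergence of the crossratio between the \emph{two} sequences $(x_i^1)$ and $(x_i^2)$, and the cleanest rigorous route I see is the five-point tree approximation of Theorem~\ref{theorem:approxtree}, taking care that the additive error $kh(5)$ is independent of $i$.
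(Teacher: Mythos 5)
Your proposal is correct and follows essentially the same route as the paper's own proof: reduce to showing that a sequence in $M^{(3)}$ whose coordinates degenerate to a point $x\in M$ is $\rho$-unbounded, bound $\rho(X_0,X_i)$ below by $(pq\mid x_i^1x_i^2)$ for a pair $p,q$ of basepoint coordinates avoiding $x$, and deduce that this crossratio diverges from $(pq\mid xx_i^1),(pq\mid xx_i^2)\to\infty$ (compatibility of the crossratio topology) via the five-point tree approximation of Theorem~\ref{theorem:approxtree}. Your write-up in fact spells out the tree-geometric inequality $(pq\mid x_i^1x_i^2)_{\tau_i}\geq\min\{(pq\mid xx_i^1)_{\tau_i},(pq\mid xx_i^2)_{\tau_i}\}$ more explicitly than the paper does.
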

\begin{proof}
First, we show that any sequence $(X_i)_i$ in $M^{(3)}\subset M^{(3)}\cup M$ converging to some $x\in M$ in the quotient topology, is $\rho$-unbounded.
Consider thus a sequence $(X_i)_i=(x_i^1,x_i^2,x_i^3)_i$ converging to some $x\in M$. Modulo a subsequence, we can assume that 
$(x_i^1)_i$ and $(x_i^2)_i$ both converge to the same element $x\in M$. For any point $(a,b,c)\in M^{(3)}$ with $a\neq x \neq b$, we have 
\[ \rho((a,b,c),(x_i^1,x_i^2,x_i^3))\geq (ab\mid x_i^1x_i^2),\]
where the latter crossratio goes to infinity: indeed, using Theorem \ref{theorem:approxtree}, we can compare any $5$-element subset $\{a,b,x_i^1,x_i^2,x\}$ with a metric tree and we know that $(ab\mid xx_i^1)$ and $(ab\mid xx_i^2)$ go to infinity as the crossratio topology coincides with the topology on $M$. 
% As the crossratio on $M$ is compatible with the topology on $M$, we know that $(ab\mid x_i^1x)$ and $(ab\mid x_i^2x)$ become arbitrarly large when $i$ is chosen large enough. So, in the comparison tree, the path $[x_i^1x]$ and $[x_i^2x]$ are very far away from the path $[ab]$ and so the same can be said about the path $[x_i^1x_i^2]$.

So now, if $K\subset M^{(3)}$ is a $\rho$-bounded set, then any convergent sequence in $K$ must converge in $M^{(3)}\subset M^{(3)}\cup M$. We conclude that $K$ has compact closure in $M^{(3)}$.
\end{proof}
	\section{$3$-cocompactness and conical limit points \label{sc:Bowditch+}}
Throughout this section, we assume that $(M,d)$ is a perfect, metrizable compactum and that $G$ is a locally compact group acting continuously by homeomorphisms on $M$. If $G$ is discrete, then both Bowditch and Tukia give a characterization of the $3$-cocompactness of the action in terms of {\em conical limit points} (See Definition \ref{def:conical} below). We aim to generalize this  result to the case of locally compact groups.

Note that we did not use metrizability of $M$ in the proof of Theorem \ref{theorem:Bowditch}. 
% Actually, this metrizability was a consequence of the fact that the constructed annuli system gives rise to a crossratio topology which is metrizable and compatible with the original topology. 
It is  unclear whether the metrizability assumption can be omitted in Theorem \ref{Theorem:Bowditch+} as the proof of Proposition \ref{proposition:constructannulisystem} below makes explicit use of the metric on $M$.
\subsection{Conical limit points}
\begin{defn}
Assume that $G$ acts as a convergence group on $M$ and let $a\in M$. We say that $a$ is a {\bf conical limit point} if there are $b\neq c\in M$ and a sequence $(\gamma_i)_i\subset G$ such that $(\gamma_i)_{\mid M\backslash \{a\}}\to \{c\}$ locally uniformly on $M\backslash \{a\}$ and $\gamma_i(a)\to b$. Note that, in particular, $(\gamma_i)_i$ is a collapsing sequence. \label{def:conical}
\end{defn}
Bowditch and Tukia, both with their own approach, use conical limit points to characterize the {\em uniformity} of a convergence group.
\begin{theorem}[Bowditch \cite{Bowditch}, Tukia \cite{Tukia}]
Suppose that $M$ is a perfect, metrizable compactum, and that $\Gamma$ is a discrete group with a convergence action on $M$. Then $\Gamma$ is uniform, i.e. acts $3$-cocompactly, if and only if every point $a\in M$ is a conical limit point.
\label{theorem:BowTuk}
\end{theorem}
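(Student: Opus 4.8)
The plan is to treat the two implications separately, the forward one being essentially formal and the converse being the substantial part.

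\emph{Uniformity implies every point is conical.} Since $\Gamma$ is a discrete convergence group acting $3$-cocompactly on the perfect compactum $M$, Theorem~\ref{theorem:Bowditch} applies: $\Gamma$ is hyperbolic and there is a $\Gamma$-equivariant homeomorphism $\partial\Gamma\to M$. It therefore suffices to recall the standard geometric fact that every point of the boundary of a hyperbolic group is a conical limit point. Concretely, fix a basepoint $o$ in a locally finite Cayley graph of $\Gamma$ and, given $a\in M=\partial\Gamma$, choose a geodesic ray $r$ from $o$ to $a$; cocompactness of the $\Gamma$-action produces $h_n\in\Gamma$ with $d(o,h_n o)\to\infty$ whose orbit points track $r$ within a fixed distance. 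Setting $\gamma_n=h_n^{-1}$, one checks on the boundary that $\gamma_n|_{M\setminus\{a\}}$ converges locally uniformly to the backward endpoint $c$ of the rays, while $\gamma_n(a)\to b$ for some $b\neq c$; this is exactly the dynamical condition of Definition~\ref{def:conical}. I expect this direction to require only bookkeeping to reconcile the geometric and dynamical formulations of conicality.

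\emph{Every point conical implies uniformity.} Here I would argue by contradiction, purely in dynamical terms so as to avoid re-deriving the quasimetric. Suppose the action on $M^{(3)}$ is not cocompact, so there is a sequence of triples $t_i=(x_i,y_i,z_i)\in M^{(3)}$ whose classes leave every compact subset of $\Gamma\backslash M^{(3)}$. Passing to subsequences by compactness of $M$, we may assume $x_i\to x$, $y_i\to y$, $z_i\to z$. If $x,y,z$ were distinct the limit would be a genuine triple and $t_i$ would lie in a fixed compact subset of $M^{(3)}$, contradicting non-cocompactness; hence at least two coordinates collide, say $x_i,y_i\to a$ and $z_i\to c$ with $c\neq a$ (the other cases being symmetric). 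Now invoke that $a$ is a conical limit point: there are $b\neq\delta\in M$ and $g_j\in\Gamma$ with $g_j|_{M\setminus\{a\}}\to\delta$ locally uniformly and $g_j(a)\to b$. The essential feature is that such a sequence expands near the exceptional point $a$, so one uses the $g_j$ to \emph{pull apart} the nearly-collided pair $x_i,y_i$ while the third coordinate $z_i$, lying near $c\neq a$, is driven towards $\delta$. Choosing an index $j=j(i)$ so that $g_{j(i)}t_i$ consists of three points remaining a definite distance apart lands $g_{j(i)}t_i$ in a fixed compact subset of $M^{(3)}$, contradicting that the class of $t_i$ escapes to infinity.

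The main obstacle is precisely this matching step: one must select $j(i)$ quantitatively so that the expansion of $g_{j(i)}$ near $a$ compensates the rate at which $x_i,y_i$ collapse onto $a$, while keeping $g_{j(i)}(z_i)$ bounded away from $g_{j(i)}(x_i)$ and $g_{j(i)}(y_i)$. Making this uniform demands control of the convergence dynamics near the attracting/repelling pair of $(g_j)$, which is where metrizability of $M$ is used, allowing one to work with sequences and moduli of continuity rather than with nets. Alternatively, the argument can be phrased geometrically: build from the metric an annulus system satisfying the hypotheses of Theorem~\ref{theorem:annulitocrossratios}, pass to the induced hyperbolic quasimetric $\rho$ on $Q=M^{(3)}$ with $M\cong\partial Q$ via Theorem~\ref{theorem:crossratiotoquasimetric}, and note that cocompactness is equivalent to coarse density of a $\Gamma$-orbit in $(Q,\rho)$; conicality of a boundary point then amounts to orbit points tracking a geodesic ray to it, so a non-coarsely-dense orbit would exhibit a non-conical boundary point. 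In either formulation the crux is the same uniform quantitative estimate.
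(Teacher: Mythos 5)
The forward direction is fine, though heavier than necessary: you route through Theorem~\ref{theorem:Bowditch} and the fact that boundary points of hyperbolic groups are conical, whereas the paper gets it immediately from Lemma~\ref{lemma:conicallimiteqdef} (if $M^{(3)}=\Gamma K$ then $(x,z,x_i)\in\Gamma K$ for any sequence $x_i\to x$, so every $x$ is conical). The problem is the converse, where your primary, purely dynamical argument has a genuine gap at exactly the step you flag. A conical sequence $(g_j)$ at $a$ gives $g_j|_{M\setminus\{a\}}\to\delta$ locally uniformly and $g_j(a)\to b\neq\delta$; this controls the ``radial'' dynamics at $a$ but gives no mechanism for separating a pair $x_i,y_i\to a$ whose mutual distance is much smaller than their distance to $a$. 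Locally uniform convergence off $a$ actively works against you here: for such a pair, every $g_j$ that moves $x_i$ away from $a$ also keeps $g_j(x_i)$ and $g_j(y_i)$ close together, so no choice of $j(i)$ need ever pull them a definite distance apart. This separation at all relative scales is precisely the content of the theorem, not a bookkeeping step, and conicality of the single point $a$ is not enough to produce it. (You also omit the case where all three coordinates of $t_i$ collide to the same point.)

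Your fallback ``geometric'' formulation is in substance the paper's actual proof, but as written it presupposes the two ingredients that carry all the weight. First, to invoke Theorem~\ref{theorem:annulitocrossratios} and Theorem~\ref{theorem:crossratiotoquasimetric} without cocompactness you must construct a symmetric $\Gamma$-invariant annulus system satisfying (A1), (A2) and the separation axiom from the metric alone; this is Proposition~\ref{proposition:constructannulisystem} (an inductive construction over the compact exhaustion $\Pi(n)$ of $M^{(2)}$, using Lemma~\ref{lemma:smalldiameter}), combined with Lemma~\ref{lm:seperateconicalcompact}, which converts (A4) plus conicality of every point into (A3). Second, the contradiction is not extracted from ``coarse density of an orbit'' in the abstract, but from Lemma~\ref{lemma:conicallimiteqdef}: the escaping sequence $\rho$-converges to some $x\in\partial Q\cong M$, conicality of $x$ produces triples $(y,x,x_i)\in\Gamma\Theta_0$ lying at bounded distance from a geodesic ray to $x$, and these are simultaneously $\rho$-close to the escaping sequence (hence far from $\Gamma\theta$) and inside a $\rho$-bounded orbit $\Gamma\Theta_0$. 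Without these two constructions the outline does not close.
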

In this section, we prove the following.
\begin{theoremBowditch+}
Suppose that $M$ is a perfect, metrizable compactum, and that $G$ is a {\em locally compact} group with a (always continuous) convergence action on $M$. Then $G$ is uniform, i.e. acts $3$-cocompactly, if and only if every point $a\in M$ is a conical limit point.
\label{theorem:Bowditch++}
\end{theoremBowditch+}
We immediately obtain
\begin{corollary}
A locally compact hyperbolic group can be characterized as a group acting continuously and $3$-properly on a perfect metrizable compactum $M$ such that every $a\in M$ is a conical limit point.
\end{corollary}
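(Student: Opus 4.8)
The plan is to read off this characterization directly from the two halves of Theorem~\ref{theorem:BowBowditch+}, handling the two implications of the "if and only if" separately. Neither direction requires any new construction; the entire content is already packaged in Theorems~\ref{theorem:Bowditch} and~\ref{Theorem:Bowditch+}, so the proof is essentially a matter of chaining them in the right order.

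First I would treat the implication that such an action forces hyperbolicity. Suppose $G$ acts continuously and $3$-properly on a perfect metrizable compactum $M$ in which every point is a conical limit point. Because $M$ is metrizable, Theorem~\ref{Theorem:Bowditch+} applies verbatim, and its content is precisely that the hypothesis ``every $a \in M$ is a conical limit point'' is equivalent to $3$-cocompactness of the action. Having upgraded the action to a continuous, $3$-proper, $3$-cocompact action on a perfect compactum, I would then invoke Theorem~\ref{theorem:Bowditch} to conclude that $G$ is hyperbolic (and, as a bonus, that $M$ is $G$-equivariantly homeomorphic to $\partial G$). It is worth recording here that metrizability enters \emph{only} to justify the use of Theorem~\ref{Theorem:Bowditch+}; Theorem~\ref{theorem:Bowditch} itself is proved without it.

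For the converse I would take $M = \partial G$. The facts recalled at the beginning of Section~\ref{sc:Bowditch} supply everything needed: for a non-elementary locally compact hyperbolic group, the boundary $\partial G$ is a perfect, metrizable compactum on which $G$ acts continuously, $3$-properly and $3$-cocompactly. Feeding this $3$-cocompact action back into Theorem~\ref{Theorem:Bowditch+} once more yields the equivalent statement that every point of $\partial G$ is a conical limit point, which produces exactly the action asserted by the corollary.

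The only genuine point requiring care is the non-elementary hypothesis, which I expect to be the sole ``obstacle'' in the sense that it is easy to overlook rather than technically hard. The equivalence is really a characterization of \emph{non-elementary} hyperbolic groups: a perfect compactum is infinite, whereas an elementary hyperbolic group is compact or two-ended and so has boundary of cardinality at most two, which cannot be perfect. I would therefore state and read the corollary with this convention understood, noting that the perfectness condition imposed on $M$ is precisely what excludes the elementary case on the action side. Beyond this bookkeeping there is no hard step: the argument is a formal combination of Theorems~\ref{theorem:Bowditch} and~\ref{Theorem:Bowditch+}, with Theorem~\ref{Theorem:Bowditch+} serving as the bridge that converts the conical-limit-point hypothesis into the $3$-cocompactness required by Theorem~\ref{theorem:Bowditch}.
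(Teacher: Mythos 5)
Your argument is correct and is exactly the derivation the paper intends: the corollary is stated as an immediate consequence of combining Theorem~\ref{Theorem:Bowditch+} (to convert the conical-limit-point hypothesis into $3$-cocompactness, and back) with Theorem~\ref{theorem:Bowditch} and the standard fact that a non-elementary hyperbolic group acts as a uniform convergence group on its perfect metrizable boundary. Your remark that perfectness of $M$ silently restricts the statement to non-elementary hyperbolic groups is a correct and worthwhile clarification of the paper's implicit convention.
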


The hard part to Theorem $A_2$ is to show that $3$-cocompactness is implied by the fact that every point $a\in M$ is a conical limit point. The other direction follows immediately from Lemma \ref{lemma:conicallimiteqdef} below (See Tukia's paper \cite{Tukia}, bottom of page 5). Actually, Tukia's proof only considers the case of discrete groups, but the locally compact case can be proved via minor modifications.
\begin{lemma}[\cite{Tukia}]
Assume that the locally compact group $G$ admits a convergence action on a compact, perfect, metrizable set $M$.
A point $x\in M$ is a conical limit point if and only if $\forall z\in M\backslash \{x\}$, there exists a compact set $K\subset M^{(3)}$ and a sequence $(x_i)_i\subset M\backslash \{x,z\}$ converging to $x$ such that $(x,z,x_i)\in G K$.
\label{lemma:conicallimiteqdef}
\end{lemma}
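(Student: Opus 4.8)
The plan is to establish the two implications separately, in each case using the dynamical dichotomy provided by the convergence property (every wandering sequence in $G$ has a collapsing subsequence), and exploiting that, since $M$ is metrizable, sequences and subsequences suffice throughout. For the forward direction, suppose $x$ is a conical limit point, witnessed by points $b\neq c$ and a sequence $(\gamma_i)$ with $(\gamma_i)_{\mid M\backslash \{x\}}\to c$ locally uniformly and $\gamma_i x\to b$. Fix an arbitrary $z\in M\backslash\{x\}$; then $\gamma_i z\to c$. Since $M$ is perfect it is infinite, so I may choose $d\in M\backslash\{b,c\}$ and set $x_i:=\gamma_i^{-1}(d)$. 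The first key step is to show $x_i\to x$: if a subsequence converged to some $x^*\neq x$, it would eventually lie in a compact set $L\subset M\backslash\{x\}$ on which $\gamma_i\to c$ uniformly, forcing $d=\lim\gamma_i(x_i)=c$, a contradiction. Because $\gamma_i x\to b\neq d$ and $\gamma_i z\to c\neq d$, one has $x_i\notin\{x,z\}$ for large $i$. Finally $(x,z,x_i)=\gamma_i^{-1}\cdot(\gamma_i x,\gamma_i z,d)$, and the triples $(\gamma_i x,\gamma_i z,d)$ converge to $(b,c,d)\in M^{(3)}$; taking $K$ to be the (compact) closure in $M^{(3)}$ of the tail of this convergent sequence yields $(x,z,x_i)\in \gamma_i^{-1}K\subset GK$ for large $i$.

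For the reverse direction, I would apply the hypothesis to a single fixed $z\in M\backslash\{x\}$, obtaining $K$ and a sequence $x_i\to x$ with $(x,z,x_i)\in GK$; writing $(x,z,x_i)=g_i\cdot k_i$ with $k_i\in K$ and setting $\gamma_i:=g_i^{-1}$ gives $\gamma_i(x,z,x_i)=k_i$. Passing to a subsequence, $k_i\to(b,c,d)\in M^{(3)}$, so that $\gamma_i x\to b$, $\gamma_i z\to c$, $\gamma_i x_i\to d$ with $b,c,d$ pairwise distinct. I would next show $(\gamma_i)$ is wandering, i.e.\ leaves every compact subset of $G$: a convergent subnet $\gamma_j\to\gamma$ would, by joint continuity together with $x_j\to x$, force $\gamma_j x_j\to\gamma x=b$, contradicting $\gamma_j x_j\to d\neq b$. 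Since $(\gamma_i)$ is wandering, the convergence property supplies a collapsing subsequence, so after relabelling $(\gamma_i)_{\mid M\backslash \{a'\}}\to c'$ locally uniformly for some $a',c'\in M$.

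The main obstacle is the final identification $a'=x$ and $c'=c$, which converts the abstract collapsing data into the conical data at $x$ and requires a short but careful case analysis tracking which of the distinguished points $x,z$ may coincide with the a priori unknown repelling point $a'$. If $a'\neq x$ then $\gamma_i x\to c'$ gives $c'=b$; if moreover $z\neq a'$ then $\gamma_i z\to c'=b$ contradicts $\gamma_i z\to c$, whereas if $z=a'$ then $x_i\to x\neq a'$ places $x_i$ eventually in a compact set off $a'$, whence $\gamma_i x_i\to c'=b$ contradicts $\gamma_i x_i\to d$. Hence $a'=x$, and then $\gamma_i z\to c'$ together with $\gamma_i z\to c$ forces $c'=c$. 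This gives $(\gamma_i)_{\mid M\backslash \{x\}}\to c$ locally uniformly while $\gamma_i x\to b\neq c$, which is precisely the assertion that $x$ is a conical limit point. The auxiliary sequence $x_i$ is exactly what rules out the degenerate configuration $a'=z$, and I expect this bookkeeping to be the only genuinely delicate part of the argument.
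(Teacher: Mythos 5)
Your proof is correct. Note that the paper itself does not prove this lemma: it cites Tukia (bottom of page 5 of \cite{Tukia}) and merely remarks that the discrete argument goes through for locally compact groups ``via minor modifications'', so your write-up supplies details the paper omits rather than paralleling an in-text proof. Both directions are sound. In the forward direction, the choice $x_i=\gamma_i^{-1}(d)$ with $d\notin\{b,c\}$ and the compactness of a convergent sequence together with its limit in the open set $M^{(3)}\subset M^3$ do exactly what is needed. In the reverse direction you correctly isolate the two points where the locally compact setting matters: the wandering claim must be argued with a convergent \emph{subnet} in $G$ (since $G$ is not assumed metrizable) combined with joint continuity of the action, while on the $M$ side metrizability lets you work with subsequences and extract a collapsing subsequence of the wandering sequence. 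The concluding case analysis identifying $a'=x$ and $c'=c$ is complete; in particular you are right that the auxiliary sequence $(x_i)$, via $\gamma_i x_i\to d\neq b$, is precisely what excludes the degenerate possibility $a'=z$, and the pairwise distinctness of $(b,c,d)\in M^{(3)}$ guarantees $b\neq c$ as required by Definition~\ref{def:conical}. Your argument also shows, as a byproduct, that the quantifier ``for all $z$'' in the statement can be weakened to ``for some $z$''.
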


Similar to the proof of Theorem $A_1$, the proof of Theorem $A_2$ will also require the construction of a specific annulus system. We start by introducing a new property $(A4)$ of an annulus system which implies property $(A3)$ if every point of $M$ is a conical limit point.
\begin{lemma}[cfr Lemma $8.3$ in \cite{Bowditch} ]
Suppose that $G$ acts as a convergence group on $M$ and that $\mathcal{A}$ is a symmetric $G$-invariant annulus system satisfying the following condition:
\[ (A4): \mbox{ If } x,y\in M \mbox{ are distinct, then } (x\mid y)>0 .\]
If $x\in M$ is a conical limit point, then $(K\mid x)=\infty$ for every compact subset $K\subseteq M\backslash \{x\}$. In particular, if every $x\in M$ is a conical limit point, then $\mathcal{A}$ satisfies condition $(A3)$ of Theorem \ref{theorem:annulitocrossratios} (i.e. $(x\mid yz)=\infty$ for all $x,y,z\in M$ distinct).
\label{lm:seperateconicalcompact}
\end{lemma}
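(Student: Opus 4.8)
The plan is to distill from the conical dynamics a single annulus separating $K$ from $x$, and then to bootstrap this into nested chains of arbitrary length. First I would unpack the hypothesis: since $x$ is a conical limit point, fix $b\neq c\in M$ and a sequence $(\gamma_i)_i\subset G$ with $(\gamma_i)_{\mid M\backslash\{x\}}\to c$ locally uniformly and $\gamma_i(x)\to b$. Because $c\neq b$, property $(A4)$ supplies an annulus $A\in\mathcal{A}$ realizing $(c\mid b)>0$, i.e. an $A$ with $c\in\mbox{int}(A^-)$ and $b\in\mbox{int}(A^+)$. This annulus is the fixed ``template'' whose $G$-translates will do all the separating.

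The core claim I would then prove is: \emph{for every compact $K\subseteq M\backslash\{x\}$ there is some $B\in\mathcal{A}$ with $K<B<x$}. Given such a $K$, local uniform convergence on $M\backslash\{x\}$ yields genuine uniform convergence $\gamma_i\to c$ on the compactum $K$ (cover $K$ by finitely many neighbourhoods on which the convergence is uniform). Since $c\in\mbox{int}(A^-)$ is open, this gives $\gamma_i(K)\subseteq\mbox{int}(A^-)$ for all large $i$; similarly $\gamma_i(x)\to b\in\mbox{int}(A^+)$ gives $\gamma_i(x)\in\mbox{int}(A^+)$ for all large $i$. Setting $B=\gamma_i^{-1}A$, which lies in $\mathcal{A}$ by $G$-invariance, and using that $\gamma_i$ is a homeomorphism (so $\mbox{int}(\gamma_i^{-1}A^\pm)=\gamma_i^{-1}\mbox{int}(A^\pm)$), the two inclusions pull back to $K\subseteq\mbox{int}(B^-)$ and $x\in\mbox{int}(B^+)$, that is $K<B<x$.

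With the claim in hand the iteration is routine. Starting from $K=L_0$, the claim gives $B_1$ with $L_0<B_1<x$. As $x\in\mbox{int}(B_1^+)$, the closed (hence compact) set $L_1=M\backslash\mbox{int}(B_1^+)$ avoids $x$, so applying the claim to $L_1$ produces $B_2\in\mathcal{A}$ with $L_1<B_2<x$; but $L_1<B_2$ is by definition exactly $B_1<B_2$. Repeating, one manufactures $K<B_1<B_2<\cdots<B_n<x$ for every $n$, so $(K\mid x)=\infty$. For the ``in particular'' statement, assume every point is conical and take distinct $x,y,z$; applying the above with the compactum $K=\{y,z\}$ gives $(\{y,z\}\mid\{x\})=\infty$. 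Reversing a separating chain $\{y,z\}<A_1<\cdots<A_n<\{x\}$ and replacing each $A_j$ by $-A_j$ produces $\{x\}<-A_n<\cdots<-A_1<\{y,z\}$ (the step $A_j<A_{j+1}$ is equivalent to $-A_{j+1}<-A_j$, since both say $\mbox{int}(A_j^+)\cup\mbox{int}(A_{j+1}^-)=M$), and the $-A_j$ lie in $\mathcal{A}$ by symmetry; hence $(\{x\}\mid\{y,z\})=(\{y,z\}\mid\{x\})=\infty$, which is precisely $(A3)$.

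I expect the only genuinely delicate point to be the orientation bookkeeping: choosing the template $A$ so that $\gamma_i^{-1}A$ (rather than $\gamma_i^{-1}(-A)$) separates $K$ from $x$ in the correct direction, and verifying the reversal identity $A_j<A_{j+1}\iff -A_{j+1}<-A_j$ needed for the symmetrization in $(A3)$. The dynamical input is comparatively soft, being a direct consequence of uniform convergence on compacta together with the openness of $\mbox{int}(A^\pm)$.
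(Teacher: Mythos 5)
Your proposal is correct and follows essentially the same route as the paper: use $(A4)$ to separate the two limit points $b$ and $c$ of the collapsing sequence by a template annulus $A$, pull back by $\gamma_i^{-1}$ to separate an arbitrary compact $K\subseteq M\setminus\{x\}$ from $x$, and iterate with $K$ replaced by $M\setminus\mathrm{int}(A_1^+)$. The extra care you take with the chain-reversal identity $A<B\iff -B<-A$ (to pass from $(yz\mid x)=\infty$ to $(x\mid yz)=\infty$ via symmetry of $\mathcal A$) is a detail the paper leaves implicit, and it checks out.
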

\begin{proof}
Let $x\in X$ be a conical limit point and let $(\gamma_i)_i,b$ and $c$ be as in the definition of conical limit point.  As $b\neq c$, condition $(A4)$ implies $(b\mid c)>0$ and so there is an annulus $A\in \mathcal{A}$ seperating $b$ and $c$. As $(\gamma_i)\to c$ uniformly on compact sets of $M\backslash \{x\}$, we see that for any compact $K\subseteq M\backslash \{x\}$, there is $i$ large enough such that $\gamma_i K < A < \gamma_i x$ and so   $K<\gamma_i^{-1}A<x$. We conclude that $(K\mid x)>0$.

Denote $A_1=\gamma_i^{-1}A$ and note that $x\in \mbox{int}(A_1^+)$. We can follow the same reasoning as above where the role of $K$ is now played by $M\backslash \mbox{int}(A_1^+)$. This way, we find $A_2\in \mathcal{A}$ such that $K<A_1<A_2<x$. Continuing inductively in this manner, we conclude that $(K\mid x)=\infty$.
\end{proof}
\subsection{The proof of Theorem \ref{Theorem:Bowditch+}}
We start by constructing an annulus system om $M$ which satisfies conditions $(A1),(A2),(A4)$ (and thus $(A1),(A2),(A3)$). In Section \ref{sc:Bowditch}, we were able to use the $3$-cocompactness for this, but here, we will need to rely on the metrizability of $M$. In order to construct the annulus system, let us explain how to generalize Bowditch's Proposition $8.2$ (\cite{Bowditch}) to the setting of locally compact groups. The key to this is Lemma \ref{lemma:smalldiameter} below. We start with some notations.

Given an annulus $A=(A^-,A^+)$, we write $\lambda(A):=\min\{\diam(A^-),\diam(A^+)\}$ where diam stands for the diameter of a set in $(M,d)$. We moreover define $\mu(A):=d(A^-,A^+)$.
\begin{lemma}
Let $s>0$ and let $K\subset G$ be compact. Given any $x\neq y$, one can find an annulus $A$ in $(M,d)$ seperating $x$ and $y$ such that $\lambda(\gamma A)<s$ for every $\gamma \in K$.
\label{lemma:smalldiameter}
\end{lemma}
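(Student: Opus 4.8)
The plan is to prove Lemma~\ref{lemma:smalldiameter} by a compactness argument, exploiting the metrizability of $M$ to control the diameters of the images of an annulus under a whole compact family of group elements simultaneously. The statement says: given $s>0$, a compact $K\subset G$, and distinct points $x\neq y$, there is an annulus $A$ separating $x$ and $y$ with $\lambda(\gamma A)<s$ for every $\gamma\in K$. Recall $\lambda(A)=\min\{\diam(A^-),\diam(A^+)\}$, so it suffices to arrange that one of the two sides, say $A^-$ (the side containing $x$), has small diameter after being moved by any $\gamma\in K$; i.e.\ $\diam(\gamma A^-)<s$ for all $\gamma\in K$.

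\medskip

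\noindent\textbf{Key steps.} First I would take $A^-$ to be a small closed ball (or closed neighborhood) $\overline{B}(x,\delta)$ around $x$, and $A^+$ a small closed neighborhood of $y$, chosen disjoint from $A^-$ and with $M\setminus(A^-\cup A^+)\neq\emptyset$ (possible since $M$ is perfect); this guarantees $A=(A^-,A^+)$ is a genuine annulus separating $x$ and $y$. The point is then to choose $\delta$ small enough that $\diam(\gamma A^-)<s$ for \emph{every} $\gamma\in K$. For a single $\gamma$, continuity of the action together with compactness of $K\times M$ gives uniform continuity: the map $K\times M\to M$, $(\gamma,z)\mapsto\gamma z$, is continuous on a compact domain, hence uniformly continuous with respect to the metric $d$ on $M$ (after metrizing $K$, or simply using the uniformity on the compactum $K\times M$). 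Concretely, the set $\{(\gamma z,\gamma z'):\gamma\in K,\ d(z,z')\le\delta\}$ is the continuous image of the compact set $\{(\gamma,z,z')\in K\times M\times M: d(z,z')\le\delta\}$; as $\delta\to 0$ these image sets shrink into the diagonal, so there is a $\delta>0$ with $d(\gamma z,\gamma z')<s$ whenever $d(z,z')\le\delta$ and $\gamma\in K$. With such a $\delta$, any two points of $A^-=\overline{B}(x,\delta)$ are at distance $\le 2\delta$ apart; rechoosing $\delta$ for the modulus $s$ against diameter $2\delta$ of $A^-$ then forces $\diam(\gamma A^-)\le s'$ uniformly, giving $\lambda(\gamma A)\le\diam(\gamma A^-)<s$ for all $\gamma\in K$, as required.

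\medskip

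\noindent\textbf{Main obstacle.} The crux is extracting \emph{uniform} (rather than merely pointwise-in-$\gamma$) control over the compact family $K$. The danger is that, although each individual $\gamma$ contracts a sufficiently small ball around $x$, the required radius $\delta(\gamma)$ might degenerate as $\gamma$ ranges over $K$. This is exactly where compactness of $K$ and continuity of the global action map $G\times M\to M$ are essential: one must argue that $(\gamma,z)\mapsto\gamma z$ is jointly continuous and invoke uniform continuity on the compactum $K\times M$, so that a single $\delta$ works for all $\gamma\in K$ at once. I would state this uniform-continuity step carefully, perhaps as the explicit claim that for every $s>0$ there is $\delta>0$ with $\diam(E)\le\delta\Rightarrow\diam(\gamma E)<s$ for all $\gamma\in K$, and derive the lemma by applying it to $E=A^-$. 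Once this uniform modulus is in hand the rest is routine: choose $A^-$ inside the $\delta$-ball about $x$, shrink $A^+$ about $y$ to keep the two sides disjoint and the complement nonempty, and read off $\lambda(\gamma A)<s$.
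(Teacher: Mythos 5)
Your argument is correct, and while it rests on the same two pillars as the paper's proof (joint continuity of the action map $G\times M\to M$ and compactness of $K$), it is organized quite differently. The paper first proves an equicontinuity statement at the identity of $G$ (for each compact $C\subset M$ and $\epsilon>0$ there is a neighbourhood $V$ of $1$ with $d(\nu c,c)<\epsilon$ for all $\nu\in V$, $c\in C$), then covers $K$ by finitely many translates $V_{\gamma_i B}\,\gamma_i$, handles the finitely many base elements $\gamma_i$ by shrinking a pre-chosen annulus $B$ with $\lambda(B)<s/2$ to a smaller annulus $A$ with $\lambda(\gamma_i A)<s/2$, and finally writes an arbitrary $\gamma\in K$ as $\nu_i\gamma_i$ to conclude $\lambda(\gamma A)<\lambda(\gamma_i A)+2\epsilon<s$. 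You instead establish in one stroke a uniform modulus of continuity for the whole family $K$ acting on $M$ --- via the nested compact sets $\{(\gamma z,\gamma z'):\gamma\in K,\ d(z,z')\le\delta\}$ shrinking into the diagonal as $\delta\to 0$ --- and then simply take $A^-$ to be a small closed ball about $x$. Your route is more direct: it avoids the two-stage construction ($B$ then $A$) and the coset covering of $K$, at the price of one compactness argument in $K\times M\times M$. A small merit of your formulation with the sets $E_\delta$ is that it sidesteps any metrizability issue for $K$ (sequences in $K$ need not have convergent subsequences when $G$ is non-metrizable), using only the compact Hausdorff structure of $K\times M\times M$; your parenthetical hedge about ``metrizing $K$'' is therefore best dropped in favour of the nested-compacts argument you already give. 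Both proofs deliver the same uniform bound, and each would serve equally well in the subsequent construction of the annulus system in Proposition \ref{proposition:constructannulisystem}.
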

\begin{proof}
Given $x\in M$, denote the metric ball with center $x$ and radius $r$ by $B(x,r)$. Let $C\subset M$ be a compact set and fix $\epsilon>0$. We show first that there is an open neighbourhood $V\subset G$ of the identity such that $d(\nu c, c)<\epsilon$ for every $\nu\in V, c\in C$. In particular, $\diam(\nu C)<\diam(C)+2\epsilon$ for every $\nu\in V$.

For every $c\in C$, there exists a neighbourhood $U_c\subset M$ of $c$ and a neighbourhood $V_c\subset G$ of the identity such that $V_cU_c\subset B(c,\epsilon/2)$. Clearly, as $1\in V_c$, we get that $U_c\subset B(c,\epsilon/2)$. Cover $C$ with all these $U_c$ and consider a finite subcover $\{U_{c_i}\mid i=1,2,\ldots ,n\}$. Denote $V_C=\cap_{i=1}^n V_{c_i}$. Now, given $c\in C$, choose $c_i\in C$ such that $c\in U_{c_i}$. Then for all $\nu \in V_C$, we have $d(\nu c, c)\leq d(\nu c, c_i) + d(c_i,c)< \epsilon/2+\epsilon/2 =\epsilon$ as desired.

Now, take an annulus $B$ seperating $x$ and $y$ such that $\lambda(B)<s/2$. As $B^-\cup B^+$ is compact, we can define $V_B=V_{C}$ as above where $C=B^-\cup B^+$ and $\epsilon=s/4$. For each $\gamma\in K, \gamma B$ is also an annulus and so we obtain a collection of opens $\{V_{\gamma B}\}_{\gamma \in K}$. Clearly, the $(V_{\gamma B} \cdot \gamma)_{\gamma \in K}$ cover $K$, so we can derive a finite subcover $\{V_{\gamma_i B} \gamma_i\}_{i=1,2,\ldots ,m}$.

% Starting from $A$, construct the finite subcover $\{V_{\gamma_i} \gamma_i\}_{i=1,2,\ldots ,m}$ as above. 
Now, take closed subsets of $B^-,B^+$ to obtain another annulus $A$ seperating $x$ and $y$ such that additionally $\lambda(\gamma_i A)<s/2$ for $i=1,2,\ldots m$. Given any $\gamma \in K$, we can write it as $v_i\gamma_i$ for some $i=1,2,\ldots ,m$ and some $\nu_i\in V_{\gamma_i B}$. Consequently, $\lambda(\gamma A)=\lambda(\nu_i\gamma_iA)<\lambda(\gamma_iA)+2\epsilon<s/2+s/2=s$, as desired.
\end{proof}

For every $n\in \N$, we define the set $\Pi(n)=\{(x,y)\in M^{(2)} \mid d(x,y)\geq 1/n\}$. It is clear that $\Pi(n)$ is compact as a closed subset of $M\times M$, so the $\{\Pi(n)\}_n$ form a compact exhaustion of the space $\Pi:=M^{(2)}$.
% Recall than an annulus system $\mathcal{A}$ on a perfect compactum naturally gives rise to a crossratio (cfr \ref{theorem:annulitocrossratios}). The following result shows that one can construct $\mathcal{A}$ in such a way that this crossratio is a hyperbolic path crossratio compatible with the topology on $M$.
\begin{prop}
Suppose that a locally compact group $G$ acts as a convergence group on a perfect, metrizable, compact set $(M,d)$. Then, there exists a symmetric $G$-invariant annulus system $\mathcal{A}$ on $M$ such that if $(x,y,z,w)\in M^{(4)}$, then the three quantities $(xy\mid zw), (xz,yw)$ and $(xw\mid yz)$ are all finite and at least two of them are equal to $0$. Moreover, if $x,y\in M$ are distinct, then $(x\mid y)>0$. This annulus system thus satisfies the conditions $(A1),(A2),(A4)$.
%  of Theorem \ref{theorem:annulitocrossratios} and induces a hyperbolic path crossratio on $M$ which is compatible with the topology.
\label{proposition:constructannulisystem}
\end{prop}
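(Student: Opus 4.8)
The plan is to construct $\mathcal A$ directly from the metric $d$, with Lemma~\ref{lemma:smalldiameter} playing the role that the finiteness of $\mathcal A/G$ played in Section~\ref{sc:Bowditch}. Call an annulus $A$ \emph{thick} if $\mu(A)\ge\lambda(A)$, that is, if the gap $d(A^-,A^+)$ is at least $\min\{\diam(A^-),\diam(A^+)\}$; note that thickness is preserved by $A\mapsto -A$. Let $\mathcal A$ be the set of all $G$-translates $\gamma A$ of thick annuli $A$. Then $\mathcal A$ is symmetric and $G$-invariant by construction, and I would verify the three conditions $(A1),(A2),(A4)$ of Theorem~\ref{theorem:annulitocrossratios} for this system, in increasing order of difficulty.

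Condition $(A4)$ is immediate: given distinct $x,y$, choose $r$ with $4r<d(x,y)$ and set $A=(\overline{B(x,r)},\overline{B(y,r)})$; then $\lambda(A)\le 2r<d(x,y)-2r\le\mu(A)$, so $A$ is thick, $A\in\mathcal A$, and $x\in\operatorname{int}(A^-)$, $y\in\operatorname{int}(A^+)$ give $(x\mid y)\ge 1$. For $(A1)$ the core is a packing estimate. If $A_1<\dots<A_q$ separate $\{x,y\}$ from $\{z,w\}$, then $x,y\in A_i^-$ and $z,w\in A_i^+$ force $\lambda(A_i)\ge\eta:=\min\{d(x,y),d(z,w)\}>0$ for every $i$; choosing a point $p_i$ in each of the pairwise disjoint gaps $M\setminus(A_i^-\cup A_i^+)$, the nesting yields $p_i\in A_\ell^-$ for $\ell>i$ and $p_i\in A_\ell^+$ for $\ell<i$, whence $d(p_i,p_j)\ge\mu(A_\ell)$ for $i<\ell<j$. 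Thus among indices with $\mu(A_i)\ge\varepsilon$ the points $p_i$ are $\varepsilon$-separated, so compactness of $M$ bounds their number. Consequently a hypothetical infinite nested family would have $\mu(A_n)\to 0$; writing $A_n=\gamma_n B_n$ with $B_n$ thick, one reaches a contradiction by distinguishing whether $(\gamma_n)$ has compact closure — where the uniform equicontinuity of the action over compact subsets of $G$ underlying Lemma~\ref{lemma:smalldiameter} forces $\lambda(B_n)\to 0$ against the bound $\lambda(B_n)\ge\min\{d(\gamma_n^{-1}x,\gamma_n^{-1}y),d(\gamma_n^{-1}z,\gamma_n^{-1}w)\}$ — or is wandering — where the collapsing subnet supplied by the convergence property produces the contradiction.

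The main obstacle is $(A2)$. For genuinely thick annuli it is an ultrametric-type phenomenon: if a thick annulus separates $\{x,z\}$ from $\{y,w\}$, then its side of smaller diameter is a cluster isolated from the complementary pair by a gap at least as large as its own diameter, and a short computation with the gap and triangle inequalities shows that the two \emph{incompatible} pairings cannot both be nested beyond a universal depth $k$. The difficulty is that the members of $\mathcal A$ are $G$-translates of thick annuli and need not be thick themselves, so this clean argument does not apply verbatim. I would therefore prove $(A2)$ by contradiction along sequences $(x_n,y_n,z_n,w_n)\in M^{(4)}$ with $(x_nz_n\mid y_nw_n)\to\infty$ and $(x_nw_n\mid y_nz_n)\to\infty$, and recover the thick picture by two devices: Lemma~\ref{lemma:smalldiameter}, to control uniformly the diameters of the translated annuli over compact subsets of $G$, and the convergence property, to extract collisions of point-pairs from the deep nesting (the analogue of Lemma~\ref{lem:infinitexratio}, but now \emph{without} the finiteness of $\mathcal A/G$ that was used there). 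Carrying out this reduction — so that the metric incompatibility of the two pairings may be invoked — is where essentially all the work lies, and it is precisely the step at which the non-discreteness of $G$ must be tamed by the dynamics rather than by a finite-orbit argument.
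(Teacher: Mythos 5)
Your construction (all $G$-translates of ``thick'' annuli) is genuinely different from the paper's, which builds $\mathcal A=\bigcup_n\mathcal A(n)$ inductively so that \emph{each} $\mathcal A(n)$ has only finitely many $G$-orbits and so that every annulus added at stage $n+1$ has \emph{all} of its $G$-translates of $\lambda$-value smaller than both $1/(n+2)$ and half the quantity $\mu=\min_A\sup_\gamma\mu(\gamma A)$ of the previous stage, while its base representative has $\mu$-value larger than $1/(n+2)$. That quantitative $\lambda(A)<\mu(B)$ comparison is what kills two of the three pairings simultaneously (giving the stated conclusion with constant $0$, not merely $(A2)$ with some $k$), and the per-scale finiteness of orbits is what makes Lemma~\ref{lem:unbddannuli} and Lemma~\ref{lem:3ptsmax} available for $(A1)$. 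Your system has neither feature, and the two places where your sketch defers to ``the dynamics'' are exactly where this is fatal.

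Concretely: in your $(A1)$ argument the wandering case is asserted, not proved, and I do not see how to complete it. The packing estimate correctly yields a nested subfamily with $\mu(A_n)\to0$ and $\lambda(A_n)\geq\eta$, with $A_n=\gamma_nB_n$, $B_n$ thick and $(\gamma_n)$ wandering. Passing to a collapsing subnet $\gamma_n|_{M\setminus\{a_0\}}\to c_0$, the bound $\lambda(\gamma_nB_n^{\pm})\geq\eta$ forces both $B_n^-$ and $B_n^+$ to meet every neighbourhood of $a_0$ eventually, hence $\mu(B_n)\to0$ and, by thickness, $\lambda(B_n)\to0$; but this is entirely \emph{consistent}: both sides of $B_n$ simply crowd the repelling point $a_0$, where local uniform convergence gives no control. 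The argument of Lemma~\ref{lem:3ptsmax} cannot be imitated because it depends on writing $A_n=\gamma_nA$ for a \emph{fixed} $A$ (finiteness of $\mathcal A/G$), whereas the thick annuli with $\lambda$ bounded below form a family with infinitely many $G$-orbits. (In the model case $G=\Aut(T)$ acting on $\partial T$ one can check by a direct convex-hull computation that an annulus with two fat sides separated by a thin gap has \emph{no} thick translate, so $(A1)$ may well be true for your system --- but the proof would have to be this geometric fact, not the compact/wandering dichotomy you invoke.) The second gap is $(A2)$: you explicitly leave it as ``where essentially all the work lies,'' and even the base case you appeal to --- that two incompatible pairings by thick annuli cannot both be nested beyond a universal depth --- is only asserted; the elementary inequalities $\min\{d(x,y),d(z,w)\}\leq\lambda(A)\leq\mu(A)\leq\min\{d(x,z),d(x,w),d(y,z),d(y,w)\}$ and their counterpart for the other pairing produce equalities rather than a contradiction at depth one, and no mechanism for a depth bound independent of the configuration is given. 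Since $(A2)$ is the entire point of the paper's delicate multi-scale construction, deferring it means the proposal does not yet contain a proof.
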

\begin{proof}
Following the proof of Proposition $8.2$ in \cite{Bowditch}, we will inductively construct a sequence of symmetric $G$-invariant annulus systems $\mathcal{A}(n)$ with $\mathcal{A}(n)/G$ finite.  Fix $n\in \N$. Writing $(K\mid L)_n:=(K\mid L)_{\mathcal{A}_n}$ for short, we assume by induction that for all $(x,y,z,w)\in M^{(4)}$, at least two of the quantities $(xy\mid zw)_n, (xz\mid yw)_n$ and $(xw\mid yz)_n$ are equal to $0$, that $(x\mid y)_n>0$ for every $(x,y)\in \Pi(n)$ and that $\lambda(A), \mu(A)>0$ for every $A\in \mathcal{A}(n)$. Let us now construct a symmetric $G$-invariant annulus system $\mathcal{A}(n+1)$ which satisfies the above conditions for $n$ replaced by $n+1$.

Let $\mu$ be the minimal value of $\{\sup_{\gamma \in G}(\mu(\gamma A))\}$, where $A$ ranges over $\mathcal{A}(n)$. Note that $\mu>0$ because $\mathcal{A}(n)/G$ is finite. Using the convergence group hypothesis, we see that given any annulus $A$ and $\epsilon>0$, there is a compact set $K\subset G$ such that $\lambda(\gamma A)<\epsilon$ for all $\gamma \in M\backslash K$. Using this, together with Lemma \ref{lemma:smalldiameter}, we see that given any $\pi=(x,y)\in \Pi(n+1)$, we can find an annulus $A(\pi)=(A^-(\pi),A^+(\pi))$ seperating $x$ and $y$ such that $\lambda(\gamma A(\pi))<\min(\mu/2, \frac{1}{n+2}\}$ for all $\gamma \in G$. Moreover, because $M$ is a perfect metric space and since $d(x,y)\geq \frac{1}{n+1}$, one can make sure that additionally, $\mu(A)>\frac{1}{n+2}$.

The $(\mbox{int}(A(\pi)^-)\times \mbox{int}(A(\pi)^+))_{\pi \in \Pi(n+1)}$ cover the compact set $\Pi(n+1)$ and so we can derive a finite subcover. The corresponding elements $\pi$ of the subcover form a finite set $\{\pi_1,\pi_2,\ldots ,\pi_p\}\subseteq \Pi(n+1)$ for some $p\in \N$.
Let $\mathcal{B}=\cup\{\gamma A(\pi_i),-\gamma A(\pi_i)\mid 1\leq i \leq p, \gamma \in G\}$. For every $A\in \mathcal{B}$, we have 
$\lambda(A)<\min(\mu/2, 1/(n+2))$. Define $\mathcal{A}(n+1)=\mathcal{A}(n)\cup \mathcal{B}$. Clearly, this is a symmetric $G$-invariant 
annulus system and by construction $(x\mid y)_{n+1}>0$ for every $(x,y)\in \Pi(n+1)$.

Assume by contradiction that for some $(x,y,z,w)\in M^{(4)}$, we have $(xy\mid zw)_{n+1}>0$ {\em and} $(xz\mid yw)_{n+1}>0$. Then we 
can take $A,B\in \mathcal{A}(n+1)$ such that $\{x,y\}<A<\{z,w\}$ and $\{x,z\}<B<\{y,w\}$. We will now show that 
$\lambda(A)<\mu(B)$, which will immediately give the desired contradiction as $\mu(B)\leq d(x,y)\leq \lambda(A)$ or $\mu(B)\leq d(z,w)\leq \lambda(A)$.
By induction, we see that not both $A$ and $B$ can lie in $\mathcal{A}(n)$, so we can proceed under the assumption $A\in \mathcal{B}$. By construction, $\lambda(A)<\min(\mu/2, 1/(n+2))$. On the other hand, by $G$-invariance, if $B\in \mathcal{B}$, then we can choose another point in the $G$-orbit of $(x,y,z,w)$ if necessary to ensure that for the corresponding $B$, $\mu(B)>1/(n+2)$. Similarly, if $B\in \mathcal{A}(n)$ then we can arrange that $\mu(B)>\mu/2$. Consequently, $\lambda(A)<\mu(B)$ as desired.

Thus, starting the induction process from $\mathcal{A}(0)=\phi$, we obtain a symmetric $G$-invariant annulus system $\mathcal{A}:=\cup_{n\in \N} \mathcal{A}(n)$. We denote the associated crossratio by $(..\mid..)$. Since $\Pi=\cup_{n\in \N} \Pi(n)$, we see that $(x\mid y)>0$ for all $(x,y)\in M^{(2)}$. 

Finally, let us verify that $(xy\mid zw)<\infty$ for every $(x,y,z,w)\in M^{(4)}$. Denote $\lambda=\min(d(x,y),d(z,w))$ and choose $n\in \N$ with $1/n<\lambda$. If $\{x,y\}<A<\{z,w\}$, then $\lambda(A)\geq \lambda>1/n$, so $A\in \mathcal{A}(n)$. The convergence group hypothesis together with Lemma \ref{lem:unbddannuli} show that there is a bound on the size of a nested sequence of annuli in $\mathcal{A}(n)$ that seperate $\{x,y\}$ and $\{z,w\}$.
\end{proof}

As an immediate corollary of the construction above, Lemma \ref{lm:seperateconicalcompact}, Theorem \ref{theorem:annulitocrossratios} and Theorem \ref{theorem:crossratiotoquasimetric}, we obtain
\begin{corollary}
Assume that a locally compact group $G$ acts $3$-properly on a compact, perfect, metrizable set $(M,d)$ such that every point of $M$ is a conical limit point. Then the annulus system constructed in Proposition \ref{proposition:constructannulisystem} induces a hyperbolic path crossratio $(..\mid..)$ on $M$ which is compatible with the topology.
This crossratio induces a $G$-invariant hyperbolic path quasimetric $\rho$ on the space $Q(=M^{(3)})$ of distinct triples of $M$. Moreover, the hyperbolic boundary $\partial Q$ can be $G$-equivariantly identified by a homeomorphism with $M$ and the crossratio $(..\mid..)_\rho$ on $\partial Q=M$, induced by the crossratio associated to $\rho$ on $Q$, differs from $(..\mid..)$ by at most an additive constant.
\label{corollary:machinery}
\end{corollary}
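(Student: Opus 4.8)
The plan is to simply concatenate the results already assembled in Section~\ref{sec:tools} and earlier in this section; the corollary is genuinely a matter of checking that the output of each cited theorem meets the hypotheses of the next, together with a short verification that everything produced is $G$-equivariant.

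First I would invoke Proposition~\ref{proposition:constructannulisystem} to obtain a symmetric $G$-invariant annulus system $\mathcal{A}$ on $M$ satisfying conditions $(A1)$, $(A2)$ and $(A4)$. Since by hypothesis every point of $M$ is a conical limit point, Lemma~\ref{lm:seperateconicalcompact} applies and upgrades $(A4)$ to $(A3)$: for all distinct $x,y,z \in M$ one has $(x\mid yz) = \infty$. Thus $\mathcal{A}$ satisfies all three conditions required by Theorem~\ref{theorem:annulitocrossratios}, which then yields that the associated crossratio $(..\mid..)$ is a hyperbolic path crossratio on $M$ compatible with the topology. Feeding this into Theorem~\ref{theorem:crossratiotoquasimetric} — whose hypotheses, namely $M$ a perfect compactum with a compatible hyperbolic path crossratio, are exactly what the previous step provides — gives that $\rho$, defined from this crossratio by Equation~(\ref{equation:quasimetric}), is a hyperbolic path quasimetric on $Q = M^{(3)}$, that $M$ is naturally homeomorphic to $\partial Q$, and that $(..\mid..)$ and $(..\mid..)_\rho$ agree up to an additive constant on $\partial Q \cong M$.

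The only point requiring a word beyond mechanical citation — and the step I would treat as the main obstacle, though it is mild — is the $G$-equivariance. Here I would note that $G$-invariance of $\mathcal{A}$ forces $G$-invariance of the crossratio $(..\mid..)$, since for $\gamma \in G$ the maximal number of nested annuli of $\mathcal{A}$ separating $\{x,y\}$ and $\{z,w\}$ is unchanged upon applying $\gamma$ to the four points and to each separating annulus. As $\rho$ is built from $(..\mid..)$ by the purely formula-theoretic Equation~(\ref{equation:quasimetric}), it follows that $\rho$ is a $G$-invariant quasimetric on $Q$. Finally, because the homeomorphism $M \to \partial Q$ of Theorem~\ref{theorem:crossratiotoquasimetric} is constructed naturally from the crossratio data (through the geodesic rays of Lemma~\ref{lm:infinitegeodesic} and Bowditch's embedding), this naturality together with the $G$-invariance just established makes the identification $G$-equivariant, exactly as in the proof of Theorem~\ref{theorem:Bowditch}. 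This completes the chain and hence the proof.
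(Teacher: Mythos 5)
Your proposal is correct and follows exactly the same chain the paper uses: Proposition~\ref{proposition:constructannulisystem} gives the annulus system with $(A1),(A2),(A4)$, Lemma~\ref{lm:seperateconicalcompact} upgrades $(A4)$ to $(A3)$ using the conical limit point hypothesis, and then Theorems~\ref{theorem:annulitocrossratios} and~\ref{theorem:crossratiotoquasimetric} deliver the crossratio, the quasimetric and the boundary identification. The paper states this as an immediate consequence of those four results without further argument; your additional remarks on $G$-equivariance only make explicit what the paper treats as following from naturality, as in the proof of Theorem~\ref{theorem:Bowditch}.
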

% \begin{prop}
% Suppose that $M$ is a perfect, metrizable compactum, and that $G$ is a locally compact group with a convergence action on $M$. If every point $a$ of $M$ is a conical limit point, then the action of $G$ on $M$ is $3$-cocompact.
% \end{prop
With the machinery of Corollary \ref{corollary:machinery} in place, we require one last lemma before we can finish the proof of Theorem \ref{Theorem:Bowditch+}.
\begin{lemma} \label{lm:rhounbounded}
Any $\rho$-unbounded sequence $(X_i)_{i\in \N}$ in $Q\equiv M^{(3)}$ contains a subsequence which converges, with respect to the $\rho$-topology, to some $x\in M$.
\end{lemma}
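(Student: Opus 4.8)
The plan is to produce the desired subsequence in two stages: I would first extract a subsequence along which two of the three coordinates of $X_i$ converge to a common point $x\in M$, and then show that this already forces $X_i$ to converge, in the $\rho$-topology, to the ideal point $f(x)\in\partial Q$ which is identified with $x$ under the homeomorphism $\partial Q\cong M$ of Theorem~\ref{theorem:crossratiotoquasimetric}.

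For the first stage, I would fix the basepoint $o=(a,b,c)\in Q$ used to define the $\rho$-topology. Since $\{X_i\}$ is $\rho$-unbounded one may pass to a subsequence with $\rho(o,X_i)\to\infty$. Writing $X_i=(x_i^1,x_i^2,x_i^3)$ and expanding $\rho(o,X_i)$ via Equation~(\ref{equation:quasimetric}), some crossratio $(pq\mid x_i^jx_i^k)$, with $pq$ a pair from $\{a,b,c\}$ and $jk$ a pair from $\{1,2,3\}$, tends to infinity. The same reasoning that proves Lemma~\ref{lemma:unboundedimpliesconvergence} then applies: two sequences whose mutual crossratio blows up must merge, so after relabelling and passing to a further subsequence one obtains $x\in M$ with $x_i^1\to x$ and $x_i^2\to x$ (the fixed basepoint coordinates $a,b,c$ cannot merge). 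The only inputs here are that the crossratio is finite (axiom (A1)) and compatible with the topology of $M$, both of which are provided by Proposition~\ref{proposition:constructannulisystem} and Corollary~\ref{corollary:machinery}.

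The heart of the proof is the second stage, where I would upgrade this coordinatewise convergence. By Lemma~\ref{lemma:centre}, $X_i$ is a centre of the triple of ideal points $f(x_i^1),f(x_i^2),f(x_i^3)$; in particular $(f(x_i^1)f(x_i^2)\mid X_iX_i)_\rho\simeq 0$, which in the hyperbolic quasimetric space $(Q,\rho)$ means precisely that $X_i$ lies within a uniformly bounded $\rho$-distance $D$ of some bi-infinite geodesic $s_i$ joining $f(x_i^1)$ to $f(x_i^2)$ (these are distinct, as the coordinates of a triple are). Since $f$ is a homeomorphism and $x_i^1,x_i^2\to x$, both endpoints of $s_i$ tend to $f(x)$; by hyperbolicity the nearest point of $o$ on $s_i$ recedes to infinity while every point of $s_i$ lying far from $o$ is close, in the $\rho$-topology, to $f(x)$. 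As $\rho(o,X_i)\to\infty$ and $X_i$ stays within distance $D$ of $s_i$, its nearest point $m_i$ on $s_i$ satisfies $\rho(o,m_i)\to\infty$, whence $m_i\to f(x)$ and therefore $X_i\to f(x)$. Concretely, fixing the geodesic ray $(\xi_n)_n$ from $o$ representing $f(x)$ that defines the neighbourhoods $D(n)$, one checks that any geodesic from $o$ to $X_i$ fellow-travels $(\xi_n)_n$ up to level $\rho(o,X_i)-O(1)\to\infty$, so for each fixed $n$ it eventually meets $N_\rho(\xi_n,r)$; that is, $X_i\in D(n)$ for all large $i$, which is exactly $\rho$-convergence to $f(x)$.

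I expect this second stage to be the main obstacle, since it requires importing into the quasimetric framework the standard facts of $\delta$-hyperbolic geometry used above: the existence of the bi-infinite geodesic $s_i$ between two ideal points, the equivalence between the vanishing of $(f(x_i^1)f(x_i^2)\mid X_iX_i)_\rho$ and bounded distance to $s_i$, the statement that a family of geodesics with both endpoints tending to a common ideal point recedes from $o$ and is eventually swallowed by every neighbourhood of that point, and the fellow-travelling estimate. All of these are exactly the estimates already underlying Theorem~\ref{theorem:crossratiotoquasimetric}, so the work lies in carefully propagating the additive hyperbolicity and path constants rather than in any new idea. Alternatively, one may bypass geodesics entirely and, using that the $\rho$-topology is the crossratio topology on $Q\cup\partial Q$, prove the equivalent statement that $(f(u)f(v)\mid f(x)\,X_i)_\rho\to\infty$ for fixed distinct $u,v\in M\setminus\{x\}$.
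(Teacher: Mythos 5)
Your argument is correct in substance but takes a genuinely different route from the paper's. The paper works entirely inside $(Q,\rho)$: it chooses a $k$-geodesic from the basepoint to each $X_i$, uses compactness of closed $\rho$-balls (Lemma~\ref{lm:rhoboundedisrelcompact} together with Corollary~\ref{corollary:compactisrhobounded}) to extract, by a diagonal argument, a limiting $3k$-geodesic ray whose endpoint is the desired $x\in\partial Q\equiv M$, and concludes by fellow-travelling. You instead locate $x$ first in $M$ itself, via the coordinate-merging argument of Lemma~\ref{lemma:unboundedimpliesconvergence}, and then transfer back to $\partial Q$ through the centre property. Both routes rest on the same input, namely Lemma~\ref{lem:infinitexratio} adapted to the annulus system of Proposition~\ref{proposition:constructannulisystem} (an increasing union of systems with finitely many $G$-orbits rather than a single one --- the same adaptation the paper makes when checking axiom (A1) there); what yours buys is an explicit identification of the limit, showing that $\rho$-convergence to $f(x)$ agrees with the quotient-topology convergence of Section~\ref{sec:proofUCG}. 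Your second stage can moreover be shortened: by the construction of $f$ preceding Lemma~\ref{lemma:centre} (equivalently, Lemma~\ref{lm:infinitegeodesic} applied with $a=x_i^1$, $b=x_i^2$, $c=x_i^3$), the point $X_i$ lies at $\rho$-distance $0$ from a bi-infinite geodesic $s_i$ joining $f(x_i^1)$ to $f(x_i^2)$, so no ``bounded distance $D$'' is needed. Two small corrections to your estimates: a geodesic from $o$ to $X_i$ need not fellow-travel the ray to $f(x)$ up to level $\rho(o,X_i)-O(1)$ (if $x_i^3$ stays close to $x_i^1$, the centre $X_i$ recedes in a direction transverse to $f(x)$ well beyond the level at which the rays to $f(x_i^1)$ and $f(x)$ separate); the correct level is governed by that separation level together with $\rho(o,s_i)$, both of which still tend to infinity, so the conclusion is unaffected. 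Likewise the step ``$\rho(o,m_i)\to\infty$ whence $m_i\to f(x)$'' is not valid as literally stated and should be replaced by what your Gromov-product computation actually gives: every point of $s_i$ is uniformly close to $f(x)$ for large $i$, because any subgeodesic of $s_i$ stays at distance at least $\rho(o,s_i)$ from $o$ while the endpoints of $s_i$ converge to $f(x)$.
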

\begin{proof}
Let $(X_i)_i$ be an unbounded sequence and for each $X_i$, let $\gamma_i$ denote a $k$-geodesic connecting some basepoint $(a,b,c)$ to $X_i$. Recall that by definition, $\gamma_i$ is a set of points $(a,b,c)=x_0^i, x_1^i,x_2^i, \ldots, x_{n(i)}^i=X_i$ such that $\rho(x_r^i,x_s^i)\aeq_k \lvert r-s\rvert$ for all $r,s\in \{1,2,\ldots ,n(i)\}$. As $\rho$-bounded balls are compact, the sequence $(x_1^i)_i$ contains a subsequence $(x_1^j)_j$ converging to some $x_1\in Q$. Similarly, the sequence $(x_2^j)_j$ contains a subsequence converging to some $x_2\in Q$ and so on. It is easy to check that the sequence $(x_l)_l$ is a $3k$-geodesic ray and so it corresponds to an element $x\in \partial Q\equiv M$. Now, for every $k\in \N$, there is some $m(k)$ such that the path $(x_1,x_2,\ldots ,x_k)$ stays at a bounded distance from a $k$-geodesic path connecting $(a,b,c)$ to $X_{m(k)}$. This subsequence of $(X_i)_i$ then $\rho$-converges to $x$ by definition. 
% As $\rho$-bounded balls are compact, one can easily verify that there is a subsequence $(\gamma_j)_j$ of $(\gamma_i)_i$ converging uniformly on compact sets to a geodesic ray $\gamma$ corresponding to some $X\in \partial Q$. In particular, the sequence $(X_j)_j$ converges to $x$.
\end{proof}

\begin{proof}[Proof of Theorem \ref{Theorem:Bowditch+}]
Fix $\theta \in Q$ and assume by contradiction that the action is not $3$-cocompact. As $\rho$-bounded sets in $M^{(3)}$ have  compact closure (see Lemma \ref{lm:rhoboundedisrelcompact}) and since $\rho$ is $G$-invariant, there exists a sequence $(u_i)_i$ in $Q$ with $\rho(u_i,G \theta)\to \infty$. The $G$-invariance of $\rho$ implies that we can suppose that $\theta$ is a nearest point of $G \theta$ to $u_i$ for all $i$. As $(u_i)_i$ is $\rho$-unbounded, it contains a subsequence $\rho$-converging to some $x\in M$ (see Lemma \ref{lm:rhounbounded}). It is easy to see that if $(v_i)_i$ is another sequence in $Q$ that $\rho$-converges to $x$ and stays at bounded distance from some geodesic ray, then $\rho(v_i,G \theta)$ must go to infinity.

Since $x$ is a conical limit point, Lemma \ref{lemma:conicallimiteqdef} shows the existence of a compact set $\Theta_0\subseteq M^{(3)}$, a point $y\in M\backslash \{x\}$ and a sequence $x_i\to x$ such that $w_i=(y,x,x_i)\in G \Theta_0$. The points $(w_i)_i$ lie at bounded distance from a geodesic ray connecting $(y,x,x_0)$ to $x$: indeed, a typical geodesic ray emanating from $(y,x,x_0)$ in the class of $x\in \partial Q$ is obtained from interpolating between the points $(y,x,x_i)$ (see the proof of Lemma \ref{lm:infinitegeodesic}). In particular, $(w_i)_i$ $\rho$-converges to $x$. However, since $\Theta_0$ is compact, it is $\rho$-bounded by Corollary \ref{corollary:compactisrhobounded} and so lies in some bounded $\rho$-ball with centre $\theta$. Consequently, $\rho(w_i,G \theta)$ is bounded, a contradiction.	
\end{proof}

\section{Sharply $n$-transitive actions} \label{sc:sharply-n-transitive}
\subsection{Introduction and the case $n\geq 4$}
In this paragraph, we discuss known results about sharply $n$-transitive actions and we report on the status of the classification of continuous, sharply $n$-transitive actions of $\sigma$-compact, locally compact groups on compact spaces. The classification is still open for $n=2$ and $n=3$.  %and it was actually one of our main motivations for generalizing Bowditch's topological characterization of hyperbolicity.
%MATHIEU : the generalization of Bowditch's result is no longer the main focus of our paper, at least as far as the introduction goes.
%
%Before going into details, let us establish some notations.
%\begin{defn}
%Let $n\in \N$ and denote the space of distinct $n$-tuples by $M^{(n)}$, i.e.
%\[ M^{(n)}= \{ (x_1,x_2,\ldots ,x_n)\in M^n \mid \ x_i\neq x_j, \forall i\neq j\in\{1,2,\ldots ,n\} \}.\]
%We say that a group $G$ acts {\em sharply $n$-transitively} on a topological space $M$ if $G$ acts continuously by homeomorphisms on $M$ such that the induced action on the space of distinct $n$-tuples $M^{(n)}$ is free and transitive.
%\end{defn}
%MATHIEU : This is introduced in the intro

Sharply $n$-transitive actions exist for any $n$: the symmetric group $S_n$ acts both sharply $n$-transitively and sharply $(n-1)$-transitively on the set $\{1,2,\ldots ,n\}$. Further, it can be shown that the alternating group $A_n$ acts sharply $(n-2)$-transitively. In $1872$, Jordan \cite{Jordan} showed the following result.
\begin{theorem}[see \cite{Jordan}]
For $n\geq 6$, the only finite sharply $n$-transitive groups are the trivial ones, i.e. $S_n,S_{n+1},A_{n+2}$.
For both the cases $n=4$ and $n=5$ there is one additional group: the Mathieu group $M_{11}$ %of degree $11$ 
for $n=4$ and the Mathieu group $M_{12}$ %of degree $12$
for $n=5$.
\end{theorem}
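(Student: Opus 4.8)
The plan is to argue by induction on $n$, driven by the one structural fact that organizes the whole classification: if $G$ acts sharply $n$-transitively on a set $M$ with $|M|=m$, then for every $x\in M$ the stabilizer $G_x$ acts sharply $(n-1)$-transitively on $M\setminus\{x\}$, and hence $|G|=m(m-1)\cdots(m-n+1)$. Thus classifying sharply $n$-transitive groups splits into two linked problems: determining the possible point stabilizers (which are themselves sharply $(n-1)$-transitive) and deciding which of these admit a sharply $n$-transitive one-point extension. These reductions are purely formal; the entire difficulty is concentrated in the low-transitivity base cases and in a single non-extension statement.

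First I would settle the base cases, which rest on the classical work of Zassenhaus (see \cite{Kerby}). A finite sharply $2$-transitive group is an affine Frobenius group with regular elementary abelian kernel, so its degree is a prime power; a finite sharply $3$-transitive group is $\PGL_2(\mathbb{F}_q)$ or one of its near-field twists, of degree $q+1$. From these one classifies the sharply $4$- and $5$-transitive groups, obtaining $S_4,S_5,A_6,M_{11}$ and $S_5,S_6,A_7,M_{12}$ respectively. The symmetric and alternating entries are the standard examples, while the exceptional members are the Mathieu groups, which surface here as the unique nontrivial one-point extensions; their existence and uniqueness is pinned down through the rigid combinatorics of their invariant Steiner systems $S(4,5,11)$ and $S(5,6,12)$. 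This sporadic analysis is the genuine combinatorial heart of the theorem.

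With the base cases available the inductive step is transparent. For $n=6$ a point stabilizer $G_x$ is sharply $5$-transitive, hence one of $S_5,S_6,A_7,M_{12}$, forcing $m\in\{6,7,8,13\}$. The values $m=6,7,8$ yield, via the order formula, exactly the trivial groups $S_6,S_7,A_8$. The remaining possibility $G_x\cong M_{12}$ would produce a sharply $6$-transitive action on $13$ points; I claim no such group exists. Granting this, the sharply $6$-transitive groups are precisely $S_6,S_7,A_8$, and for $n\geq 7$ one induces: by hypothesis $G_x\in\{S_{n-1},S_n,A_{n+1}\}$, no exceptional group occurs, each extends uniquely to the corresponding trivial sharply $n$-transitive group, and nothing new appears.

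The hard part, and the reason the exceptional examples stop at $n=5$, is exactly this non-extension statement: $M_{12}$ admits no sharply $6$-transitive one-point extension, i.e.\ there is no sharply $6$-transitive group beyond $S_6,S_7,A_8$. I would establish it by assuming such a $G$ on $13$ points with $G_x\cong M_{12}$ and analyzing its involutions in the spirit of Jordan: every nontrivial element of a sharply $n$-transitive group fixes at most $n-1$ points, and matching the resulting fixed-point counts and the order constraint $|G|=13\cdot12\cdot11\cdot10\cdot9\cdot8$ against the known conjugacy and hexad structure of $M_{12}$ produces a numerical contradiction. (Alternatively, one may short-circuit the whole $n\geq 6$ step by invoking the classification of finite multiply transitive groups, from which $6$-transitivity already forces $S_m$ or $A_m$.) This involution analysis, together with the construction of $M_{11}$ and $M_{12}$ in the base cases, carries essentially all of the content; everything else is the formal bookkeeping of passing between a group and its point stabilizers.
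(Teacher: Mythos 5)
The paper does not prove this statement: it is recorded as a classical theorem of Jordan and used as a black box (with \cite{Jordan} and \cite{Kerby} as the references), so there is no in-paper argument to measure yours against. Your outline is the standard modern route to Jordan's theorem: reduce along point stabilizers using the order formula $|G|=m(m-1)\cdots(m-n+1)$, feed in Zassenhaus's classification of sharply $2$- and $3$-transitive groups to obtain the lists $S_4,S_5,A_6,M_{11}$ and $S_5,S_6,A_7,M_{12}$ at $n=4,5$, and then induct upward once one knows that $M_{12}$ admits no sharply $6$-transitive one-point extension. The skeleton is correct, and the degree bookkeeping at $n=6$ (stabilizer of degree $m-1\in\{5,6,7,12\}$, hence $m\in\{6,7,8,13\}$, with $m=6,7,8$ forced to be $S_6,S_7,A_8$ by the order formula) is right.

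As a proof, however, the proposal defers essentially all of its content. Three ingredients are asserted rather than established: (i) Zassenhaus's classification for $n=2,3$; (ii) the existence and, more importantly, the \emph{uniqueness} of $M_{11}$ and $M_{12}$ as the sole exceptional one-point extensions at $n=4,5$, which is where the Steiner systems $S(4,5,11)$ and $S(5,6,12)$ and a genuine rigidity argument must be supplied; and (iii) the non-extension of $M_{12}$, for which ``matching fixed-point counts against the known conjugacy and hexad structure of $M_{12}$ produces a numerical contradiction'' names a strategy but exhibits no contradiction. Point (iii) is the load-bearing step for the entire $n\geq 6$ half of the statement, and it needs an actual computation -- for instance a Sylow or Frobenius-type count in a hypothetical group of order $13\cdot 12\cdot 11\cdot 10\cdot 9\cdot 8$, or (if one routes through transitive extensions of designs) the arithmetic non-existence of a Steiner system $S(6,7,13)$. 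The fallback of invoking the classification of multiply transitive groups is valid but circular in spirit, since that classification subsumes Jordan's theorem. In short: your sketch correctly locates where the proof lives and how the pieces fit, but it is a roadmap with citations rather than a proof, which is in fairness exactly the status the statement has in the paper itself.
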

Later, J. Tits (\cite{Tits1} (1952))  generalized this result to the infinite case by showing that there are no infinite groups acting sharply $n$-transitively on any set for $n\geq 4$. In particular, this reduces the study to the cases $n = 2$ and $n=3$. 

% M. Hall (\cite{Hall} (1954)) further extended Tits' result by showing that for $n \geq 4$ there are no infinite groups acting $n$-transitively on a set such that the stabiliser of some $n$-tuple of distinct points is finite of odd order. Finally, Yoshizawa \cite{Yoshizawa} extended this result for stabilizers of $n$-tuples of even order. Using language from the introduction, this result can be phrased as follows : let $G$ be a discrete group acting $n$-properly and $n$-transitively on a discrete set $M$ for $n\geq 4$, then $M$ and $G$ are finite. For more background on the subject, we refer the reader to \cite{Kerby}.

\subsection{The classification for $n=2$ \label{subsc:n=2}}
In \cite{Tits3} $(1952)$, Tits classifies all sharply $2$-transitive actions of $\sigma$-compact, locally compact groups on locally compact, connected, first countable topological spaces. % that a $\sigma$-compact, locally compact sharply $2$-transitive group acting on a locally compact, connected, first countable \marginpar{where does this "first countable" hypothesis come from? It is not in the introduction, is that a mistake?} space is isomorphic to the group of transformations $x\mapsto a+mx$, over either the real numbers, the complex numbers or the quaternions.
As we will show, if we require the action to be on a (possibly not connected, possibly not first countable) {\em compact} space $M$, then we can reduce to the finite case.
%\begin{notation}
%In the remainder of this section, $G$ will be a $\sigma$-compact, locally compact group and $M$ will be a compact space. Any action is assumed continuous.
%\end{notation}
Let us first record a standard fact (see the corollary to Theorem $8$ in \cite{Are46} or Ch VII, App. 1, Lemme 2 in \cite{Bou63}).
\begin{lemma} \label{lemma:Baireargument}
	Let $G$ be a $\sigma$-compact, locally compact group acting continuously and transitively on a locally compact space $X$ and let $x \in X$. Then the orbital map $f : G / G_x \to X$ is a homeomorphism.
\end{lemma}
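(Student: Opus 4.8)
The plan is to verify that $f$ is a continuous bijection and then concentrate all the work on showing it is open, which is where $\sigma$-compactness enters through a Baire category argument. Transitivity makes $f$ surjective, and it is injective by the very definition of the stabilizer $G_x$, so $f$ is a bijection. The orbit map $\pi : G \to X$, $g \mapsto g\cdot x$, is continuous and factors as $\pi = f \circ q$ where $q : G \to G/G_x$ is the canonical projection; since $q$ carries the quotient topology, the universal property gives continuity of $f$. Because $q$ is a surjective open map and $\pi = f\circ q$, establishing that $\pi$ is open will immediately yield that $f$ is open, and hence a homeomorphism. Moreover, by equivariance it suffices to prove the single statement that \emph{$U\cdot x$ is a neighborhood of $x$ for every open neighborhood $U$ of the identity $e$}: for an arbitrary open $O \subseteq G$ and $g \in O$, applying this to $g^{-1}O$ and then translating by the homeomorphism $y \mapsto g\cdot y$ shows $O\cdot x$ is a neighborhood of $\pi(g)$.

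To prove that claim, I would first use local compactness and continuity of the group operations to choose a compact symmetric neighborhood $V$ of $e$ (so $V = V^{-1}$) with $V\cdot V \subseteq U$. Since $G$ is $\sigma$-compact, covering $G$ by countably many compact sets and then each of these by finitely many translates of the interior of $V$ produces countably many elements $g_n \in G$ with $G = \bigcup_n g_n V$. Transitivity then gives $X = G\cdot x = \bigcup_n g_n\cdot(V\cdot x)$. The key structural point is that each set $g_n\cdot(V\cdot x)$ is closed: $V\cdot x = \pi(V)$ is the continuous image of the compact set $V$, hence compact, hence closed in the Hausdorff space $X$, and $g_n$ acts as a homeomorphism.

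Now the main obstacle — and the only place the hypotheses are genuinely used — is the invocation of the Baire category theorem: $X$ is locally compact Hausdorff, hence a Baire space, so in the countable cover $X = \bigcup_n g_n\cdot(V\cdot x)$ by closed sets at least one has nonempty interior. Transporting by the homeomorphism $g_n^{-1}$ shows $V\cdot x$ itself has nonempty interior, say containing a point $v_0\cdot x$ with $v_0 \in V$. Finally I would translate back by $v_0^{-1} \in V$: the homeomorphism $y \mapsto v_0^{-1}\cdot y$ sends $v_0\cdot x$ to $x$ and $V\cdot x$ into $v_0^{-1}V\cdot x \subseteq V\cdot V\cdot x \subseteq U\cdot x$, placing $x$ in the interior of $U\cdot x$. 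This establishes that $\pi$, and therefore $f$, is open, completing the proof. The delicate steps to watch are the closedness of $V\cdot x$ (where Hausdorffness of $X$ is essential) and the reduction to a countable union (where $\sigma$-compactness is essential); everything else is a routine manipulation of neighborhoods in a topological group.
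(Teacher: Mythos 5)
Your argument is correct and is precisely the standard Baire-category argument that the paper only sketches (and attributes to Arens and Bourbaki): continuity and bijectivity of $f$ are immediate, and openness is obtained by covering $X$ by countably many closed translates $g_n\cdot(V\cdot x)$ via $\sigma$-compactness and applying the Baire property of the locally compact space $X$. All the steps you flag as delicate (closedness of $V\cdot x$ via Hausdorffness, the countable cover, the final translation by $v_0^{-1}$) are handled correctly, so this is a complete version of the paper's intended proof.
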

\begin{proof}[Proof Sketch]
The fact that $f$ is a continuous bijection follows immediately. Using the fact that $G$ is $\sigma$-compact and that $X$ is a Baire space, one can show that $f$ is in fact open, hence a homeomorphism.
\end{proof}
We obtain the following important consequence on sharply $n$-transitive groups.
\begin{lemma} \label{lemma:sharplytransisproper}
Let $G$ be a $\sigma$-compact, locally compact group acting continuously and sharply $n$-transitively on a compact space $M$, and let $x \in M^{(n)}$. Then the orbit map $f:G \to M^{(n)} : \gamma \mapsto \gamma x$ is a homeomorphism. In particular, the action of $G$ on $M$ is $n$-proper.
\end{lemma}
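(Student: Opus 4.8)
The plan is to deduce the statement directly from Lemma~\ref{lemma:Baireargument} applied to the action of $G$ on $M^{(n)}$. First I would observe that sharp $n$-transitivity means precisely that $G$ acts \emph{transitively} on $M^{(n)}$ and that the action is \emph{free}, i.e. the stabilizer $G_x$ of any point $x \in M^{(n)}$ is trivial. Thus the orbit map $f : G \to M^{(n)},\ \gamma \mapsto \gamma x$ coincides with the orbital map $G/G_x \to M^{(n)}$ of Lemma~\ref{lemma:Baireargument}, since $G_x = \{1\}$. To invoke that lemma I must check its hypotheses are met: $G$ is $\sigma$-compact and locally compact by assumption, the action on $M^{(n)}$ is continuous (the componentwise action on $M^n$ is continuous and $M^{(n)}$ carries the subspace topology), and it is transitive by sharp $n$-transitivity. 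What remains is that $M^{(n)}$ be locally compact, which is where the compactness of $M$ enters: $M^n$ is compact, and $M^{(n)}$ is an open subset of $M^n$ (its complement is the closed union of the diagonals $\{x_i = x_j\}$), hence $M^{(n)}$ is locally compact. With all hypotheses verified, Lemma~\ref{lemma:Baireargument} yields that $f$ is a homeomorphism.

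For the final assertion, that the action of $G$ on $M$ is $n$-proper, I would recall that $n$-properness means exactly that the induced action of $G$ on $M^{(n)}$ is proper. Since $G$ acts on itself by left translation properly (for any $\sigma$-compact locally compact group the left-translation action is proper), and since $f$ is a $G$-equivariant homeomorphism conjugating the left-translation action of $G$ on itself to the action on $M^{(n)}$, properness transfers across $f$. Concretely, for a compact set $K \subset M^{(n)}$ the set $\{\gamma \in G \mid \gamma K \cap K \neq \emptyset\}$ is carried by $f^{-1}$ to the analogous set for the compact set $f^{-1}(K) \subset G$ under left translation, which has compact closure; hence so does the original.

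I do not expect any serious obstacle here: the entire content is the identification of the orbit map with the orbital map of a free transitive action and the verification of the local compactness of $M^{(n)}$, both of which are routine once compactness of $M$ is in hand. The one point meriting a word of care is the equivariance and properness transfer in the last step, but this is immediate from the fact that $f$ intertwines the two $G$-actions and that properness is a homeomorphism- and equivariance-invariant property.
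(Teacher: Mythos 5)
Your proposal is correct and follows essentially the same route as the paper: the paper's proof likewise observes that sharp $n$-transitivity makes the stabilizer trivial, invokes Lemma~\ref{lemma:Baireargument} to see the orbit map is an equivariant homeomorphism, and transfers properness from the left-translation action of $G$ on itself. You merely spell out the routine verifications (local compactness of $M^{(n)}$ as an open subset of the compact space $M^n$, and the properness transfer) that the paper leaves implicit.
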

\begin{proof}
%It is easy to see that $f$ is a continuous bijection. Since $G$ is $\sigma$-compact, one can use a Baire category argument to show that $f$ is in fact open, hence a homeomorphism. % The idea of the proof is quite simple: fixing $\alpha\in M^{(n)}$, one sees easily that the orbit map $f:G \rightarrow M^{(n)}, \gamma \mapsto \gamma\alpha$ is a continuous $G$-equivariant bijection. Using Baire's category theorem, along with other more basic tools, one derives that $f$ is actually a homeomorphism 
As the action is sharply $n$-transitive, the stabilizer of any point $x\in M^{(n)}$, denoted $G_x$, is trivial. So, the orbit map $f : G \to M^{(n)}$ is an (equivariant) homeomorphism. 
As the action of any topological group on itself by left multiplication is proper, it follows that the action of $G$ on $M$ is $n$-proper.
\end{proof}
As a first step towards understanding sharply $2$-transitive actions, we show that $G$ must be compact. This statement appears in a preprint of Yaman \cite{Yaman}.
\begin{lemma}
 \label{lem:P1=P2} If $G$ acts $2$-properly and continuously on a compact space $M$, then $G$ is compact or $M$ is a singleton.
		\end{lemma}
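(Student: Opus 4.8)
The plan is to treat the statement as an ``escape'' phenomenon: if $G$ fails to be compact, then a net going to infinity in $G$ must collapse $M$ to a single point in the limit, and this collapse can be turned against $2$-properness. Since there is nothing to prove when $M$ is a singleton, I will assume $M$ has at least two points and suppose, for a contradiction, that $G$ is non-compact. Then there is a net $(\gamma_i)$ in $G$ leaving every compact subset of $G$; write $\gamma_i\to\infty$. Because $M$ is compact, the space of all self-maps $M^M=\prod_{x\in M}M$ is compact for the topology of pointwise convergence (Tychonoff), so after passing to a subnet I may assume $\gamma_i\to f$ and $\gamma_i^{-1}\to g$ pointwise for some (not necessarily continuous) maps $f,g\colon M\to M$.

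The first key step is to show that $f$ is \emph{constant}. Fix distinct $x,y\in M$ and suppose $f(x)\neq f(y)$. Then $\gamma_i(x,y)=(\gamma_i x,\gamma_i y)\to (f(x),f(y))\in M^{(2)}$. Choosing a compact set $K\subset M^{(2)}$ whose interior contains both $(x,y)$ and $(f(x),f(y))$, for all sufficiently large $i$ one has $(x,y)\in K$ and $\gamma_i(x,y)\in K$, so $\gamma_i\in\{\gamma\in G\mid \gamma K\cap K\neq\emptyset\}$. By $2$-properness this set is relatively compact, contradicting $\gamma_i\to\infty$. Hence $f(x)=f(y)$ for every pair of distinct points, and since $M$ has at least two points $f$ is a constant map $\omega$. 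Applying the same argument to the net $(\gamma_i^{-1})$ shows that $g$ is a constant map $\omega'$.

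The final step manufactures a violation of $2$-properness from these two collapses, and this is where the only real subtlety lies: the collapse $\gamma_i\to\omega$ is by itself perfectly consistent with properness, since a fixed off-diagonal pair is \emph{supposed} to have its image run into the diagonal. To get a contradiction I will instead feed in a pair whose first coordinate moves so that its image stays fixed and away from $\omega$. Concretely, choose $u'\in M$ with $u'\neq\omega$ and $v\in M$ with $v\neq\omega'$, and set $u_i:=\gamma_i^{-1}(u')$. Then $u_i\to g(u')=\omega'\neq v$, so the pairs $p_i:=(u_i,v)$ eventually lie in $M^{(2)}$ and converge to $(\omega',v)\in M^{(2)}$, while their images $\gamma_i p_i=(u',\gamma_i v)$ converge to $(u',\omega)\in M^{(2)}$. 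Thus for a compact $K\subset M^{(2)}$ whose interior contains both $(\omega',v)$ and $(u',\omega)$, the elements $p_i$ and $\gamma_i p_i$ eventually lie in $K$, forcing $\gamma_i\in\{\gamma\mid\gamma K\cap K\neq\emptyset\}$; as before this contradicts $\gamma_i\to\infty$. I expect the bookkeeping with nets rather than sequences (since $M$ is not assumed metrizable) and the careful choice of $u',v$ off the relevant diagonal points to be the only places needing care; the mechanism itself is elementary, using nothing beyond the definition of a proper action and the compactness of $M$.
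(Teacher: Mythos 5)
Your proof is correct and follows essentially the same strategy as the paper: first show that a wandering net collapses $M$ to a single point, then contradict $2$-properness by feeding in a pair whose first coordinate is pulled back by $\gamma_i^{-1}$ so that its image stays away from the collapse point (the paper's pair $(x,\delta_n^{-1}(z'))$ plays exactly the role of your $(\gamma_i^{-1}(u'),v)$). Your Tychonoff packaging, which also makes $\gamma_i^{-1}$ converge pointwise to a constant $\omega'$, is a slightly cleaner way to organize the same argument and lets you avoid the paper's ``up to exchanging $x$ and $y$'' case distinction.
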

		\begin{proof}
			Arguing by contradiction, suppose that there exists a non-compact group $G$ acting continuously and 2-properly on a compact set $M$ with at least $2$ points. Pick a wandering sequence $(\gamma_n)_n$ and pick $x \neq y \in M$. Because of compactness, the sequence $(\gamma_n(x))_n$ %either has a subsequence converging to some $x'$ (e.g. if the space is metrizable) or it simply 
			has a limit point $x'$. %In both cases, f
			For any neighbourhood $U$ of $x'$, there is a subsequence $(\alpha_n(x))_n$ of $(\gamma_n(x))_n$ which lies in $U$. Similarly, for $(\alpha_n(y))_n$ there is such a point, say $y'$, and we can take a further subsequence $(\beta_n(y))_n$ which lies in a neighbourhood $V$ of $y'$. If $x'\neq y'$, then we can take $U,V$ to be disjoint compact neighbourhoods which contradicts $2$-properness. We conclude $x'=y'$.

Now, take a neighbourhood $U$ as above and $z'\notin U$. There exists a limit point $z\in M$ %such that the sequence $\beta_n^{-1}(z')$ either has a subsequence converging to $z$ or $z$ is a limit point 
of $\{ \beta_n^{-1}(z') \mid n\in \N\}$. Up to exchanging $x$ and $y$ we can assume that $z \neq x$. 
Let $W$ be a compact neighbourhood of $z$ not containing $x$ and take a subsequence $(\delta_n)_n\subset (\beta_n)_n$ such that $\delta_n^{-1}(z')\in W$ for all $n$. Letting $z_n = \delta_n^{-1} z'$, we have that $\delta_n z_n=z'$ and $z'\notin U\ni \delta_n(x)$, which contradicts $2$-properness.
		\end{proof}
\begin{prop}
Let $G$ be a $\sigma$-compact, locally compact group acting continuously and sharply $2$-transitively on a compact space $M$, then both $G$ and $M$ are finite.
\label{prop:2-transitiveimpliesfinite}
\end{prop}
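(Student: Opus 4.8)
The plan is to deduce the statement directly from the two preceding lemmas, with only a short point-set topology argument at the end. By Lemma~\ref{lemma:sharplytransisproper}, the sharp $2$-transitivity together with the $\sigma$-compactness of $G$ guarantees that the action of $G$ on $M$ is $2$-proper and that every orbit map $f:G\to M^{(2)}$ is a homeomorphism. We may assume $|M|\geq 2$, since otherwise $M$ is already finite and there is nothing to prove; in this case $M^{(2)}$ is nonempty and $f$ is surjective.

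Since $G$ acts $2$-properly and continuously on the compact space $M$, Lemma~\ref{lem:P1=P2} shows that either $G$ is compact or $M$ is a singleton. The second alternative is ruled out by $|M|\geq 2$, so $G$ must be compact. As $f:G\to M^{(2)}$ is a continuous surjection from the compact group $G$, its image $M^{(2)}$ is compact.

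It then remains to convert compactness of $M^{(2)}$ into finiteness of $M$. Writing $\Delta\subset M^2$ for the diagonal, the space $M^{(2)}=M^2\setminus\Delta$ sits inside the compact Hausdorff space $M^2$, and a compact subspace of a Hausdorff space is closed; hence $M^{(2)}$ is closed in $M^2$ and $\Delta$ is open. Openness of the diagonal forces each singleton to be open: if $U\times V$ is a basic open neighbourhood of $(x,x)$ contained in $\Delta$, then $u=v$ whenever $u\in U$ and $v\in V$, which is possible only if $U=V=\{x\}$. Thus $M$ is discrete, and being compact it is finite; finally $G\cong M^{(2)}$ is finite as well.

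I do not expect any serious obstacle here, since Lemmas~\ref{lemma:sharplytransisproper} and~\ref{lem:P1=P2} carry all of the analytic weight. The only genuinely new ingredient is the observation that a compact $M^{(2)}$ forces $M$ to be discrete, and this is precisely where the compactness and Hausdorffness of $M$ are used together.
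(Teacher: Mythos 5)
Your proposal is correct and follows exactly the paper's own argument: Lemma~\ref{lemma:sharplytransisproper} gives $2$-properness and $G\cong M^{(2)}$, Lemma~\ref{lem:P1=P2} gives compactness of $G$ and hence of $M^{(2)}$, and closedness of $M^{(2)}$ in $M^2$ forces $M$ to be discrete, hence finite. You merely spell out the final point-set step (openness of the diagonal implies discreteness) in more detail than the paper does; there is nothing to correct.
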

\begin{proof}
If $G$ acts sharply $2$-transitively on $M$, then by Lemma \ref{lemma:sharplytransisproper}, the action is $2$-proper and $G\cong M^{(2)}$. Lemma \ref{lem:P1=P2} then implies that $G$ is compact and so $M^{(2)}=M^2\backslash \{(x,x)\mid x\in M\}$ is also compact. In particular, $M^{(2)}$ must be closed in $M^2$ and so the topology on $M$ must be discrete. As $M$ is compact, this implies that $M$ and so $G$ are finite. 
\end{proof}
Zassenhaus classified all finite sharply $2$-transitive groups as the affine transformation groups of finite near fields and he moreover classified all finite near-fields \cite{Zassenhaus1,Zassenhaus2}.
A {\em near-field} is an algebraic structure with two operations, addition and multiplication, satisfying all of the axioms for a field with the possible exception of the commutative law for multiplication $(xy=yx)$ and the left distributive law $(x(y+z)=xy+xz)$. The {\em affine transformations} of a near field $M$ are the maps of the form $x\mapsto ax+b$ where $a,b\in M, a\neq 0$ and they form a sharply $2$-transitive group called the {\em affine transformation group}. Combining his result with ours, we obtain the following 
\begin{prop} 
Let $G$ be a $\sigma$-compact, locally compact group acting continuously and sharply $2$-transitively on a compact set $M$. Then $M$ is a finite near-field and $G$ is the affine transformation group of $M$.
\end{prop}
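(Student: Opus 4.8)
The plan is to reduce immediately to the finite case and then quote the known algebraic classification. First I would invoke Proposition~\ref{prop:2-transitiveimpliesfinite}, whose hypotheses coincide exactly with those of the present statement, to conclude that both $G$ and $M$ are finite. In fact the argument for that proposition yields slightly more: since $G \cong M^{(2)} = M^2 \backslash \{(x,x) \mid x \in M\}$ is compact, the off-diagonal is closed in $M^2$, forcing the topology on $M$ to be discrete.

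With finiteness and discreteness in hand, all topological hypotheses become vacuous: continuity of the action on a finite discrete space is automatic, so what remains is a purely algebraic datum, namely a sharply $2$-transitive action of an abstract finite group on a finite set. At this stage I would appeal directly to Zassenhaus's classification \cite{Zassenhaus1,Zassenhaus2}, which asserts that every finite sharply $2$-transitive group arises as the affine transformation group $x \mapsto ax+b$ of a finite near-field, with $M$ carrying the corresponding near-field structure. This is precisely the desired conclusion.

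I expect no genuine obstacle here: the substantive content is the reduction to finiteness already carried out in Proposition~\ref{prop:2-transitiveimpliesfinite}, after which Zassenhaus's theorem applies verbatim. The only point deserving a word of care is the passage from the topological to the abstract setting, but this is immediate once one records that $M$ is discrete, so that the continuous sharply $2$-transitive action of $G$ on $M$ is literally an abstract sharply $2$-transitive action of a finite group, to which the classification directly applies.
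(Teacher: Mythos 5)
Your proposal is correct and matches the paper's own argument exactly: the paper obtains this proposition by "combining" Proposition~\ref{prop:2-transitiveimpliesfinite} (finiteness of $G$ and $M$) with Zassenhaus's classification of finite sharply $2$-transitive groups. Your additional remark that discreteness of $M$ makes the topological hypotheses vacuous is a sensible, if implicit in the paper, point of care.
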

\subsection{Towards a classification for $n=3$}
The general case of $\sigma$-compact, locally compact groups acting sharply $3$-transitively on compact sets remains open.

A first known subcase is the case where the space $M$, and hence the group $G$, are finite. Zassenhaus showed in \cite{Zassenhaus1} that $M$ must be the projective line $\Projline(\FF_q)$ associated to a finite field $\FF_q$. Here, $q=p^m$ for a certain prime $p$ and $m\in \NN$. When $m$ is odd, the only possible $G$ is the projective group $\PGL_2(\FF_q)$. When $m$ is even, there is also the other possibility of the Mathieu group. We refer to \cite{Zassenhaus1} for details.

The second known case concerns $\sigma$-compact, locally compact groups acting on {\em connected} compact spaces $M$. 
% In \cite{Tits2}, Tits gives a complete classification of such groups acting effectively, continuously and $2$-transitively. Later in \cite{Kramer}, Kramer proved the same result with more modern techniques. 
It follows from \cite{Tits2} (or \cite{Kramer}) that $G$ must be $\PGL_2(\RR)$ (or $\PGL_2(\CC)$) acting on the real (respectively complex) projective line. 

The remaining case is that of sharply $3$-transitive actions on infinite disconnected compacta $M$. Note that the case of connected $M$ seems to be hiding some underlying structure. Indeed, we note that $\PGL_2(\RR)$ is the isometry group of the hyperbolic plane. So, $PGL_2(\RR)$ is hyperbolic and the sharply $3$-transitive action of this group coincides with its action on the hyperbolic boundary, namely $P^1(\RR)$. Similarly, $\PGL_2(\CC)$ is the orientation preserving isometry group of hyperbolic $3$-space. Again, we thus obtain a hyperbolic group and the sharply $3$-transitive action coincides with the action on the hyperbolic boundary of $\PGL_2(\CC)$, namely $P^1(\CC)$. It makes sense to ask whether any sharply $3$-transitive action of a $\sigma$-compact, locally compact group on an infinite {\em disconnected} compact space also coincides with the action of a hyperbolic group on its boundary. We show that this is indeed the case.
\begin{theorem}
Let $G$ be a $\sigma$-compact, locally compact group acting sharply $3$-transitively and continuously on an infinite disconnected compact space $M$. Then $G$ is hyperbolic and $M$ is $G$-equivariantly homeomorphic to the boundary of $G$
\end{theorem}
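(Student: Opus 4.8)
The plan is to reduce everything to Theorem~\ref{theorem:Bowditch} by checking its three hypotheses: that the action on $M$ is $3$-proper, that it is $3$-cocompact, and that $M$ is perfect.

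First I would establish $3$-properness. Since $G$ is $\sigma$-compact and locally compact and the action is sharply $3$-transitive, Lemma~\ref{lemma:sharplytransisproper} gives that any orbit map $f\colon G \to M^{(3)}$ is a homeomorphism. Because the left-translation action of $G$ on itself is proper, transporting it through $f$ shows that the induced action on $M^{(3)}$ is proper, which is precisely $3$-properness. The $3$-cocompactness then costs nothing: by definition the action on $M^{(3)}$ is (freely) transitive, so any single point $\theta \in M^{(3)}$ is a compact set with $G\theta = M^{(3)}$.

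The one point that genuinely needs an argument is perfectness, and here I would use homogeneity. Because $M$ is infinite we may, given any $m, m' \in M$, extend each to a distinct triple and apply transitivity on $M^{(3)}$ to produce $\gamma \in G$ with $\gamma m = m'$; hence $G$ acts transitively on $M$ itself. The set of isolated points of $M$ is therefore $G$-invariant, so were it nonempty it would be all of $M$, forcing $M$ to be discrete and hence finite by compactness, contradicting infiniteness. Thus $M$ has no isolated points and is perfect. Note that disconnectedness plays no role in this part of the argument; it is needed only later, when one upgrades the (totally disconnected) boundary to a tree.

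With $M$ a perfect compactum carrying a continuous, $3$-proper, $3$-cocompact $G$-action, Theorem~\ref{theorem:Bowditch} applies directly and yields both that $G$ is hyperbolic and that there is a $G$-equivariant homeomorphism $\partial G \to M$, which is exactly the assertion. I do not expect a serious obstacle here: all of the analytic difficulty is absorbed into the machinery of Section~\ref{sc:Bowditch}, and the work specific to this statement is simply the translation of sharp $3$-transitivity into the convergence-group framework, the only delicate step being the perfectness verification above.
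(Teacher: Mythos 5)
Your proof is correct and follows essentially the same route as the paper's: Lemma~\ref{lemma:sharplytransisproper} gives $3$-properness, sharp transitivity on $M^{(3)}$ gives $3$-cocompactness for free, and Theorem~\ref{theorem:Bowditch} does the rest. The only difference is that you explicitly verify the perfectness hypothesis of Theorem~\ref{theorem:Bowditch} (via transitivity of $G$ on $M$ and compactness), a point the paper's proof leaves implicit; your argument for it is correct.
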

\begin{proof}
As $G$ acts sharply $3$-transitively on $M$, it acts $3$-cocompactly and $3$-properly on $M$ (see Lemma \ref{lemma:sharplytransisproper}). Consequently, Theorem~\ref{theorem:Bowditch} asserts that $G$ is hyperbolic and that $M$ can be identified with the hyperbolic boundary of $G$.
\end{proof}
Again looking at the case of connected $M$, it makes sense to ask the following question.
\begin{question}
Assume that $G$ is a $\sigma$-compact locally compact group acting sharply $3$-transitively on an infinite disconnected compact space $M$. Is it true that $M$ must be the projective line over a locally compact field and that $G$ is the projective group of $M$?
\end{question}
The only known examples of sharply $3$-transitive actions of $\sigma$-compact locally compact groups on infinite disconnected compact spaces $M$, and conjecturally the only examples, are the groups $\PGL_2(k)$ acting on $P^1(k)$ for $k$ a non-Archimedean local field. Our contribution to the above question is formulated in Theorem \ref{theorem:sharply3intro}. It should be read in the light that the projective line $P^1(k)$ over a non-archimedean local field $k$, is the boundary of a locally finite tree (namely the Bruhat-Tits tree associated to $PGL_2(k)$).
\begin{theoremsharply3}%[See Theorem \ref{theorem:sharply3}]
%\label{theorem:sharply3intro}
Let $G$ be a $\sigma$-compact, locally compact group acting sharply $3$-transitively and continuously on an infinite disconnected compact space $M$.
Then $G$ acts continuously, properly and vertex-transitively by automorphisms on a locally finite tree $T$ such that there is a $G$-equivariant homeomorphism $f:\partial T \rightarrow M$.
\end{theoremsharply3}
\begin{proof}
We have already shown that $G$ is hyperbolic and that $M$ can be identified with the hyperbolic boundary of $G$. The action on the boundary is sharply $3$-transitive and so it is in particular transitive and faithful. It follows from Theorem $8.1$ in \cite{Caprace} that $G$ is a standard rank 1 group. Since $M$ is disconnected, it follows that $G$ acts properly and cocompactly on a locally finite tree $T$ whose boundary is equivariantly homeomorphic to $M$ (see Theorem D in \cite{Caprace}, alternatively one can use our Theorem \ref{theorem:tree_groups}).

It remains to show that $T$ can be chosen so that the action is vertex-transitive. Indeed, one can choose $T$ to have all vertices of degree $\geq 3$ by repeatedly ``erasing'' all vertices of degree 2 and removing all vertices of degree 1. In that case, the action will be transitive on the vertices of $T$, since it is $3$-transitive on the boundary and each point in the tree is the unique centre of some triple of distinct points of $M$.
\end{proof}

\begin{remark} Once it is observed that the action of $G$ on $M$ is $3$-proper, it is not hard to reduce Theorem~\ref{theorem:sharply3intro} to Theorem~\ref{theorem:transitiveconvergencegroup}. However, the proof of the latter makes essential use of both parts of Theorem~\ref{theorem:BowBowditch+} while in the former, $3$-cocompactness follows immediately from $3$-transitivity.
\end{remark}

% \begin{proof}[Reduction of Theorem~\ref{theorem:sharply3intro} to Theorem~\ref{theorem:transitiveconvergencegroup}] In view of Lemma \ref{lemma:sharplytransisproper} the group $G$ acts $3$-properly on $M$. As the action is sharply $3$-transitive it is in particular faithful and transitive. Moreover since $M$ is not discrete, it follows that $M^{(3)}$ is not compact (as otherwise $M^{(3)}$ would be a closed subset of $M^3$, which implies that $M$ is discrete) so that $G$ is not compact. Suppose for a moment that Theorem \ref{theorem:transitiveconvergencegroup} is proven. Then 
% \end{proof}

\section{Transitive convergence groups} \label{sc:transitiveconvergence}

The goal of this section is to prove Theorem \ref{theorem:transitiveconvergencegroup}. Let us start by collecting some observations about convergence groups that are well known, at least for discrete groups \cite[Section 2]{Bo4}. We warn the reader that some elementary properties of discrete convergence groups do not carry over to the non-discrete case. For example, for a certain (rather restricted) class of non-discrete groups, a conical limit point can also be a bounded parabolic point (this happens for example with $\Isom(\HH^2) \actson \partial \HH^2 = S^1$).

\begin{notation} Throughout this section, $G$ is a locally compact group acting continuously and $3$-properly on an infinite compact set $M$.
\end{notation} 
As in section $2$ of \cite{Bo4}, we say that an element $\gamma \in G$ is \textbf{elliptic} if the cyclic subgroup generated by $\gamma$ has compact closure. If an element $\gamma$ is not elliptic, then it fixes one or two points in $M$ and we call $\gamma$ \textbf{parabolic} or \textbf{loxodromic} respectively. Moreover, for any loxodromic (resp. parabolic) element $\gamma$ we have that $(\gamma^i)|_{M \backslash \{x\}} \to y$ as $i \to \infty$ where $x$ and $y$ are the two fixed points of $\gamma$ (resp. $x = y$ is the unique fixed point of $\gamma$). We call $x$ and $y$ the \textbf{repelling} and \textbf{attracting} fixed points of $\gamma$ respectively. Note that, in the loxodromic case, both $x$ and $y$ are conical limit points.

We will make use of the following criterion for an element $\gamma \in G$ to be loxodromic. 
\begin{lemma} Suppose that $U\subset M$ is open and suppose that $\gamma \overline U$ is a proper subset of $U$. Then $\gamma$ is loxodromic.
\end{lemma}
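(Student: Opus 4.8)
The plan is to extract, from the contraction $\gamma\overline U\subsetneq U$, an attracting fixed point lying inside $U$ and a repelling fixed point lying outside $U$, and then to invoke the elliptic/parabolic/loxodromic trichotomy recalled just above the statement. Concretely, I would show that $\gamma$ cannot be elliptic, so that the convergence dynamics $(\gamma^i)|_{M\setminus\{x\}}\to y$ are available, and then read off from $U$ that the forward and backward fixed points are distinct.

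First I would record the elementary consequences of the hypothesis. Since $\gamma$ is a homeomorphism and $\gamma\overline U\subset U\subset\overline U$, applying $\gamma$ repeatedly yields a nested chain of nonempty compacta $\overline U\supset\gamma\overline U\supset\gamma^2\overline U\supset\cdots$, and dually $\overline U\subset\gamma^{-1}\overline U$, whence $\gamma^{-1}(M\setminus\overline U)\subset M\setminus\overline U$. I would also note that the inclusion being proper forces $U\neq M$ and in fact $\overline U\neq M$, so that both $U\setminus\gamma\overline U$ and $M\setminus\overline U$ are nonempty.

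The next step is to rule out that $\gamma$ is elliptic. If $\overline{\langle\gamma\rangle}$ were compact, a standard recurrence argument in a compact group produces a net $\gamma^{n_\alpha}\to e$ with all $n_\alpha\geq 1$. Choosing $p\in U\setminus\gamma\overline U$, continuity of the action gives $\gamma^{n_\alpha}p\to p$; but $\gamma^{n_\alpha}p\in\gamma^{n_\alpha}\overline U\subset\gamma\overline U$, a closed set, forcing $p\in\gamma\overline U$, a contradiction. Here one must work with nets, since $M$ is not assumed metrizable. Hence $\gamma$ is non-elliptic, hence parabolic or loxodromic, and so $(\gamma^i)|_{M\setminus\{x\}}\to y$ and, applying this to $\gamma^{-1}$, $(\gamma^{-i})|_{M\setminus\{y\}}\to x$.

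Finally I would locate the two fixed points relative to $U$. Since $\gamma\overline U\neq\{x\}$ (otherwise $\overline U=\{x\}$ and the inclusion could not be proper), pick $p\in\gamma\overline U\setminus\{x\}$; then $\gamma^i p\to y$, while $\gamma^i p\in\gamma^{i+1}\overline U\subset\gamma\overline U$, so the limit satisfies $y\in\gamma\overline U\subset U$. On the other hand, pick $q\in M\setminus\overline U$; then $q\neq y$ (as $y\in\overline U$), and the backward iterates satisfy $\gamma^{-i}q\in M\setminus\overline U\subset M\setminus U$, which is closed, so $x=\lim_i\gamma^{-i}q\in M\setminus U$. Thus $y\in U$ while $x\notin U$, so $x\neq y$; a parabolic element has a single fixed point, so $\gamma$ must be loxodromic. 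The main obstacle is the elliptic exclusion: it is the only place that requires the recurrence argument together with careful net-theoretic bookkeeping (closedness of $\gamma\overline U$ and continuity of the action), whereas once non-ellipticity is established, the separation of the two fixed points by $U$ follows directly from the stated convergence dynamics.
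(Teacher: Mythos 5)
Your proof is correct and follows essentially the same strategy as the paper's: first exclude ellipticity using the strict contraction $\gamma\overline U\subsetneq U$, then use the forward-invariance of $\gamma\overline U$ and the backward-invariance of $M\setminus\overline U$ to separate the attracting and repelling fixed points. The only cosmetic differences are that the paper deduces non-ellipticity by showing $\langle\gamma\rangle$ is an infinite discrete subgroup (the identity being isolated) rather than by recurrence in a compact group, and it packages the two fixed points as the nested intersections $\bigcap_{i>0}\gamma^i\overline U$ and $\bigcap_{i<0}\gamma^i(M\setminus U)$, which are exactly the limits you locate directly.
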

\begin{proof} First we show that $\gamma$ is not elliptic. %Suppose on the contrary that $K = \overline{\langle\gamma\rangle}$ is compact. 
Let $x \in U \backslash \gamma \overline U$. For any integer $i > 0$ we have $\gamma^i(x) \notin U \backslash \gamma \overline U$ which is an open neighborhood of $x$. Interchanging the roles of $U$ and $M \backslash \gamma \overline U$ one sees that the same holds for $i <0$. It follows that the identity is not in the closure of $\{\gamma^i \mid i\neq 0\}$. In particular the identity is isolated in $\langle \gamma \rangle$ and $\gamma$ has infinite order. Thus $\langle \gamma \rangle$ is an infinite discrete subgroup of $G$, so that $\gamma$ is not elliptic. %there is an open neighbourhood $V\subset G$ of $1$ which does not contain any $\gamma^i$ with $i \neq 0$. $\gamma$ is not of finite order and that  Consequently, $V^{-1}$ contains none of the $\gamma^{-i}$ with $i\in \N$ and we deduce that $1$ is an isolated point in $\langle \gamma \rangle$, hence $\langle \gamma \rangle$ is discrete.  As $K$ is compact, then $\gamma$ is of finite order, a contradiction.

It remains to show that $\gamma$ has two fixed points. Observe that $U^+=\cap_{i > 0} \gamma^i \overline{U}$ and $U^- = \cap_{i < 0} \gamma^i (M \backslash U)$ are nonempty, disjoint, $\gamma$-invariant compact subsets of $M$. One then applies the convergence property to show that $U^+$ and $U^-$ are each reduced to a single point.
\end{proof}
As an immediate consequence we get the following
\begin{lemma} \label{lemma:loxodromiccollapsing}
	Suppose $(\gamma_i)_i$ is a net and $x \neq y$ such that $\gamma_i|_{M \backslash \{x\}} \to y$ then $\gamma_i$ is loxodromic for all sufficiently large $i$.
\end{lemma}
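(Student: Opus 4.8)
The plan is to reduce directly to the loxodromicity criterion of the previous lemma: I will exhibit, for all sufficiently large $i$, an open set $U$ (in fact one single $U$ working for all large $i$) with $\gamma_i\overline U$ a proper subset of $U$, and then quote that lemma verbatim.

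First I would separate the two collapsing points. Since $M$ is compact Hausdorff and $x\neq y$, it is normal, so I can choose an open neighbourhood $U$ of $y$ whose closure $\overline U$ is disjoint from some neighbourhood of $x$; in particular $\overline U$ is a \emph{compact} subset of $M\setminus\{x\}$, which is exactly the kind of set on which the hypothesis $\gamma_i|_{M\setminus\{x\}}\to y$ yields \emph{uniform} convergence. I would moreover arrange that $U$ contains at least one point $q\neq y$; this is possible because $M$ is infinite (for instance, take $U$ to be the complement of a small closed neighbourhood of $x$, which still contains $y$ together with infinitely many other points).

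The heart of the argument is then to feed this compact set $\overline U$ into the local uniform convergence. Fixing a point $q\in U\setminus\{y\}$ and, using the Hausdorff property, an open neighbourhood $N$ of $y$ with $N\subseteq U$ and $q\notin N$, uniform convergence of $\gamma_i$ to the constant $y$ on $\overline U$ gives, for all sufficiently large $i$, that $\gamma_i\overline U\subseteq N$. Since $q\in U\setminus N$, we have $q\in U\setminus\gamma_i\overline U$, so the inclusion $\gamma_i\overline U\subseteq U$ is proper. Applying the previous lemma to $\gamma_i$ and $U$ shows that $\gamma_i$ is loxodromic for all such $i$, which is the assertion.

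The only delicate point — and the step I expect to require the most care — is the \emph{strictness} of the inclusion $\gamma_i\overline U\subsetneq U$: the criterion genuinely needs a proper subset, and merely knowing $\gamma_i\overline U\subseteq U$ would not suffice (equality could occur a priori if $\overline U$ were clopen). This is precisely what the auxiliary point $q$, kept outside the shrinking images $\gamma_i\overline U$, is designed to rule out, and it is here that the hypothesis that $M$ is infinite enters, guaranteeing that $U$ can be chosen with more than the single point $y$.
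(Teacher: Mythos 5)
Your proof is correct and follows essentially the same route as the paper: both arguments produce an open set $U$ containing $y$ whose closure is a compact subset of $M\setminus\{x\}$, use local uniform convergence to push $\gamma_i\overline U$ into a small neighbourhood of $y$ inside $U$, keep a witness point of $U$ outside that neighbourhood to guarantee the inclusion is proper, and then invoke the preceding loxodromicity criterion. The paper simply packages the same idea by taking disjoint compact neighbourhoods $V\ni x$, $W\ni y$ with $M\setminus(V\cup W)\neq\emptyset$ and setting $U=M\setminus V$.
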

\begin{proof} Let $V, W$ be disjoint compact neighborhoods of $x$ and $y$ respectively such that $M \backslash (V \cup W) \neq \emptyset$. Letting $U = M \backslash V$ then for sufficiently large $i$ we have $\gamma_i(\overline{U}) \subset W$, which is a proper subset of $U$.
\end{proof}

The crucial observation needed to prove Theorem~\ref{theorem:transitiveconvergencegroup} is to show the existence of a conical limit point.
\begin{lemma} \label{lemma:existenceCLP}
	Suppose $G$ is non-compact and acts on $M$ without global fixed point. Then there exists a loxodromic element $\gamma \in G$, and in particular there exists a conical limit point $x \in M$.
\end{lemma}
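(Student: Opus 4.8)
The plan is to produce a loxodromic element directly from the dynamics and then invoke the already-noted fact that the fixed points of a loxodromic element are conical limit points. Since $G$ is non-compact, I would first produce a \emph{wandering} net $(\gamma_i)_i$ in $G$, that is, a net eventually leaving every compact subset of $G$; such a net exists because, indexing by the directed set of compact subsets of $G$ ordered by inclusion, one may choose $\gamma_K \in G \setminus K$ for each compact $K$, and this net leaves every compact set by construction. Applying the dynamical reformulation of the convergence property (every wandering net has a collapsing subnet), I may pass to a subnet and obtain points $a, c \in M$ with $\gamma_i|_{M \setminus \{a\}} \to c$ locally uniformly.

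If $a \neq c$, then Lemma~\ref{lemma:loxodromiccollapsing} applies verbatim and yields that $\gamma_i$ is loxodromic for all sufficiently large $i$, so we are done. The one remaining, and main, difficulty is the degenerate case $a = c$, in which the net collapses all of $M$ onto a single point and Lemma~\ref{lemma:loxodromiccollapsing} cannot be applied. This is precisely where the no-global-fixed-point hypothesis enters: since $a$ is not fixed by all of $G$, there is some $g \in G$ with $g(a) \neq a$, whence also $g^{-1}(a) \neq a$.

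I would then consider the modified net $\delta_i = \gamma_i g$. For any compact $K \subset M \setminus \{g^{-1}(a)\}$, the set $g(K)$ is a compact subset of $M \setminus \{a\}$, so $\gamma_i \to a$ uniformly on $g(K)$ and hence $\delta_i \to a$ uniformly on $K$. Thus $\delta_i|_{M \setminus \{g^{-1}(a)\}} \to a$ locally uniformly with $g^{-1}(a) \neq a$, and Lemma~\ref{lemma:loxodromiccollapsing} now shows that $\delta_i$ is loxodromic for all sufficiently large $i$. In either case we have produced a loxodromic element $\gamma \in G$; its two fixed points are conical limit points, as observed before the statement, which gives the final assertion. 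The only genuinely delicate point is handling the collapse $a = c$, and the key observation resolving it is that post-composing the collapsing net with a single group element that moves $a$ forces the attracting and repelling points apart.
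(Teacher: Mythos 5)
Your proof is correct and follows essentially the same route as the paper: extract a wandering net, pass to a collapsing subnet, and in the degenerate case $a=c$ compose with an element moving $a$ so that Lemma~\ref{lemma:loxodromiccollapsing} applies. The only (immaterial) difference is that the paper left-composes, considering $(\gamma\gamma_i)|_{M\setminus\{a\}}\to\gamma(a)$ to move the attracting point, whereas you right-compose with $g$ to move the repelling point to $g^{-1}(a)$.
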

\begin{proof} 
	Since $G$ is not compact, there is a sequence $(\gamma_i)_i$ leaving every compact set. After passing to a collapsing subnet $(\gamma_i)_i$ there are two points $x,y \in M$ such that $\gamma_i|_{M \backslash \{x\}} \to y$.
	\begin{itemize}
		\item Suppose $x \neq y$ then by Lemma \ref{lemma:loxodromiccollapsing} any $\gamma_i$ is loxodromic for $i$ sufficiently large.
		\item Suppose $x = y$. Take $\gamma \in G$ such that $\gamma(y) \neq y$. We observe that $(\gamma \gamma_i)|_{M \backslash \{x\}} \to \gamma(y) \neq x$. Arguing as in the previous case, there exists a loxodromic element.\qedhere
	\end{itemize}
\end{proof}

We shall also need the following 
\begin{prop} Suppose $G$ is non-compact and acts transitively on $M$. Then any compact normal subgroup of $G$ acts trivially on $M$.
\end{prop}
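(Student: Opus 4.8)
The plan is to reduce the statement to finding a single $K$-fixed point. First I would introduce the fixed-point set $\Fix(K) = \{p \in M : kp = p \text{ for all } k \in K\}$ and observe two things: it is closed, being $\bigcap_{k \in K}\Fix(k)$, and it is $G$-invariant, since for $g \in G$, $p \in \Fix(K)$, $k \in K$ normality gives $k(gp) = g(g^{-1}kg)p = gp$. Because $G$ acts transitively on the infinite set $M$, the only nonempty closed $G$-invariant subset is $M$ itself, so it will suffice to exhibit one point fixed by all of $K$.

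Next I would locate a good point to fix. Transitivity on a set with more than one element rules out a global fixed point of $G$, so Lemma~\ref{lemma:existenceCLP} provides a loxodromic $\gamma \in G$ with repelling and attracting fixed points $x$ and $y$, giving the locally uniform collapses $\gamma^n|_{M\setminus\{x\}} \to y$ and $\gamma^{-n}|_{M\setminus\{y\}} \to x$. I would aim to show $y \in \Fix(K)$.

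The engine of the proof is that the orbit $Ky$ is compact and $\gamma$-invariant: indeed $\gamma(Ky) = (\gamma K\gamma^{-1})(\gamma y) = Ky$ by normality and $\gamma y = y$. Assuming for contradiction that $Ky \neq \{y\}$, I would first push a point $q \in Ky\setminus\{y\}$ by $\gamma^{-n}$ to conclude $x = \lim_n \gamma^{-n}q \in Ky$, so that $k_0 y = x$ for some $k_0 \in K$ (and $k_0 x \neq x$ by injectivity of $k_0$). Then I would study $\kappa_n := \gamma^n k_0 \gamma^{-n} \in K$: these satisfy $\kappa_n(y) = \gamma^n(x) = x$ for all $n$, while $\kappa_n(p) = \gamma^n(k_0\gamma^{-n}p) \to y$ for every $p \neq y$, since $k_0\gamma^{-n}p \to k_0 x \neq x$ and $\gamma^n$ collapses uniformly away from $x$. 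Extracting a subnet $\kappa_n \to \kappa \in K$ (compactness of $K$) and using continuity of the action, the limit $\kappa$ would send every $p \neq y$ to $y$ but $y$ to $x$, which is impossible for the homeomorphism $\kappa$ on an infinite space. This contradiction forces $Ky = \{y\}$, i.e. $y \in \Fix(K)$, and the first paragraph finishes the proof.

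I expect the main obstacle to be exactly this last step: excluding the possibility that a nontrivial compact normal subgroup \emph{permutes} the fixed points of a loxodromic element (the configuration $x \in Ky$), which is the one scenario the crude $\gamma$-invariant-set dichotomy cannot settle on its own. The conjugation device above is what resolves it, by producing an element of the compact group $K$ that is forced to be non-injective. Should one prefer a softer argument, the same collapse can be run against a $K$-invariant probability measure $\nu$ chosen with $\nu(\{x\}) = 0$ (the invariant measure on an orbit $Kz$ with $z \notin Kx$, or on $Kx$ itself when that orbit is infinite): normality makes each $\gamma^n_*\nu$ again $K$-invariant, dominated convergence gives $\gamma^n_*\nu \to \delta_y$, and weak-$*$ closedness of $K$-invariance then exhibits $\delta_y$, hence $y$, as $K$-fixed.
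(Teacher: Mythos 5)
Your proof is correct and follows essentially the same route as the paper: both produce a loxodromic element $\gamma$ via Lemma~\ref{lemma:existenceCLP} and then play the north--south dynamics of $\gamma$ against normality and compactness of $K$. The only minor differences are that the paper gets its contradiction by pushing a distinct pair $(u_1,u_2)$ out of every compact subset of $M^{(2)}$ under the conjugates $\gamma^{-i}\kappa\gamma^{i}\in K$, whereas you extract a non-injective limit homeomorphism from the conjugates $\gamma^{n}k_0\gamma^{-n}$, and the paper globalizes by observing that every point of $M$ is the attracting fixed point of some loxodromic element rather than by $G$-invariance of $\Fix(K)$.
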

\begin{proof}
	Let $K \lhd G$ be a compact normal subgroup. By the previous lemma, there exists a loxodromic element $\gamma \in G$ with $x, y$ as repelling and attracting fixed points respectively. 
	\begin{claim*} $K$ fixes $y$
	\end{claim*} Seeking a contradiction, we suppose there is some $\kappa \in K$ such that $\kappa(y) \neq y$. Take two distinct points $u_1 \neq u_2$ in $M \backslash \{x\}$. Let $U, V, W$ be neighborhoods of $x,y$ and $\kappa(y)$ respectively. Up to making $V$ and $W$ smaller, we can assume that $V$ and $W$ are disjoint, and that $\kappa(V) \subset W$. Now for all $i$ sufficiently large we have $\gamma^i (u_j) \in V$ and $\gamma^{-i} (W) \subset U$ for $j=1,2$. Since the neighborhood $U$ of $x$ is arbitrary we have shown that $\gamma^{-i} \kappa \gamma^i (u_j) \to x$ as $i \to \infty$ for $j = 1,2$. This means that $\gamma^{-i} \kappa \gamma^i (u_1,u_2)$ leaves every compact subset of $M^{(2)}$ which contradicts the fact that $K$ is a compact normal subgroup.
	
	Now it is easy using transitivity of $G$ to show that any $y \in M$ is the attracting fixed point of some loxodromic element $\gamma \in G$. Thus the claim implies that $K$ acts trivially on $M$.%Take $U, V$ two disjoint neighborhoods of $y$ and $\kappa(y)$ respectively. By continuity of $\kappa$ we can assume that $\kappa(U) \subset V$. Now take two distinct points $u \neq v$ in $M \backslash \{x,y\}$, then $\Kappa (u,v)$ is a compact subset of $M^{(2)}$. Suppose now that $i \in \NN$ is sufficiently large so that $\gamma^i(u) \in U \ni \gamma^i(v)$.
\end{proof}
\begin{corollary} \label{corollary:Mmetrizable} If $G$ is $\sigma$-compact, non-compact and acts faithfully and transitively on $M$, then $G$ and $M$ are metrizable.
\end{corollary}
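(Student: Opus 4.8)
The plan is to first show that the faithfulness of the action eliminates all nontrivial compact normal subgroups of $G$, then invoke the Kakutani--Kodaira theorem to obtain metrizability of $G$, and finally transport metrizability to $M$ via the orbital homeomorphism of Lemma~\ref{lemma:Baireargument}. The key point to keep in mind is that $\sigma$-compactness together with local compactness does \emph{not} by itself force metrizability (e.g.\ an uncountable product of finite groups is compact but not metrizable); the faithful transitive convergence action is precisely what is needed to overcome this.

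First I would record that $G$ has no nontrivial compact normal subgroup. Indeed, $G$ is non-compact and acts transitively on $M$, so by the preceding proposition every compact normal subgroup of $G$ acts trivially on $M$. Since the action is assumed faithful, its kernel is trivial, and hence any such subgroup must be trivial.

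Next I would apply the Kakutani--Kodaira theorem: any $\sigma$-compact, locally compact group $G$ admits a compact normal subgroup $K$ such that the quotient $G/K$ is second countable, and in particular metrizable. By the previous step $K = \{1\}$, so $G$ itself is second countable and metrizable. For $M$, I would use that $M$ is compact, hence locally compact, so that Lemma~\ref{lemma:Baireargument} applies and gives a homeomorphism $M \cong G/G_x$ for any $x \in M$. The canonical projection $G \to G/G_x$ is open and continuous, so second countability of $G$ passes to $M$. As $M$ is compact Hausdorff, it is regular, and Urysohn's metrization theorem then yields that $M$ is metrizable.

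I do not expect a serious obstacle here once the correct tool is identified: the entire argument hinges on recognizing that the faithfulness hypothesis, combined with the vanishing of compact normal subgroups from the preceding proposition, is exactly what lets Kakutani--Kodaira deliver metrizability. The only mildly delicate point is making sure each reduction respects the hypotheses (openness of the quotient map, compactness of $M$ for Lemma~\ref{lemma:Baireargument}, and regularity before invoking Urysohn), all of which are routine.
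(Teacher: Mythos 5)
Your proposal is correct and follows essentially the same route as the paper: faithfulness plus the preceding proposition kills all compact normal subgroups, Kakutani--Kodaira then gives second countability of $G$, and Lemma~\ref{lemma:Baireargument} transports this to the compact space $M$. The extra remarks on openness of the quotient map and Urysohn metrization only make explicit what the paper leaves implicit.
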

\begin{proof} It is a theorem of Kakutani and Kodaira~\cite{KK} that any $\sigma$-compact locally compact group $G$ has a compact normal subgroup $K \lhd G$ such that $G / K$ is second countable. Since $G$ acts faithfully, any compact normal subgroup must be trivial by the preceding lemma. Thus $G$ is second countable. Fixing $x \in M$ we see by Lemma~\ref{lemma:Baireargument} that the space $M \cong G / G_x$ is a continuous open image of $G$ so that the compact space $M$ is second countable, and hence metrizable.
\end{proof}

We now have all the tools needed to reduce Theorem~\ref{theorem:transitiveconvergencegroup} to Theorem~\ref{theorem:BowBowditch+}.
\begin{proof}[Proof of Theorem~\ref{theorem:transitiveconvergencegroup}]
	Let $G$ be a non-compact, $\sigma$-compact locally compact group acting continuously, faithfully, transitively and $3$-properly on a compact set $M$. By Lemma~\ref{lemma:existenceCLP} there exists a conical limit point $x \in M$, hence all points are conical limit points by transitivity of $M$. Moreover $M$ is metrizable by Corollary~\ref{corollary:Mmetrizable}. Thus Theorem~\ref{Theorem:Bowditch+} implies that $G$ acts $3$-cocompactly on $M$, so that $G$ is a hyperbolic group acting transitively and faithfully on its boundary by Theorem~\ref{theorem:Bowditch}. One can thus appeal to the characterization of boundary transitive hyperbolic groups below to finish the proof.
\end{proof} 

The last step of the above proof is the characterization of boundary transitive hyperbolic groups due to Caprace, Cornulier, Monod and Tessera.
\begin{theorem}[{\cite[Theorem 8.1]{Caprace}}] \label{theorem:boundarytransitivehyperbolic}
	Let $G$ be a non-elementary hyperbolic locally compact group. Then the following are equivalent:
	\begin{enumerate}
		\item $G$ acts faithfully and transitively on its boundary.
		\item $G$ acts faithfully and $2$-transitively on its boundary.
		\item $G$ is a standard rank one group.
	\end{enumerate}
\end{theorem}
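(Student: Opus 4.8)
The two implications $(2)\Rightarrow(1)$ and $(3)\Rightarrow(2)$ are the routine ones. A $2$-transitive action is a fortiori transitive, so $(2)\Rightarrow(1)$ needs no argument. For $(3)\Rightarrow(2)$ I would treat the two families of standard rank one groups separately. In the tree case, $2$-transitivity of $G/K$ on $\partial X$ is part of the definition, and one only needs the equivariant homeomorphism $\partial X \cong \partial G$ furnished by Theorem~\ref{theorem:tree_groups} to transport it to the boundary. In the symmetric space case, one invokes the classical double transitivity of the (orientation-preserving) isometry group of a rank one symmetric space $X$ on its sphere at infinity $\partial X \cong S^n$; the transitivity half of this is already recorded for $\Isom(\HH^n)\actson S^{n-1}$ in the first Example following Proposition~\ref{prop:nice_space}. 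Faithfulness is automatic, since a standard rank one group is by construction the quotient $G/K$ by the kernel $K$ of the boundary action.

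All of the content is in $(1)\Rightarrow(3)$, and here is the strategy I would follow. Being non-elementary hyperbolic, $G$ acts continuously, properly, cocompactly and isometrically on a proper geodesic hyperbolic space $X$ with $\partial G \cong \partial X$ (Proposition~\ref{prop:nice_space}), and the boundary action is a convergence action. Transitivity of $G\actson\partial G$ makes $\partial G$ a compact, perfect, metrizable, \emph{homogeneous} space, and the crucial first step is a topological dichotomy: such a homogeneous hyperbolic boundary is either totally disconnected or connected, the intermediate possibilities being excluded by the rigidity that hyperbolic boundaries impose on their local topology together with homogeneity.

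In the totally disconnected case, Theorem~\ref{theorem:tree_groups} applies directly: $G$ acts continuously, properly and cocompactly on a locally finite tree $T$ with $\partial T$ equivariantly homeomorphic to $\partial G$, so after passing to the faithful quotient $G/K$ one lands exactly in the tree family of standard rank one groups, the $2$-transitivity on $\partial T$ being inherited from the hypothesis on $\partial G$. The connected case is where the real difficulty lies. Here one must show that a connected homogeneous hyperbolic boundary is a sphere $S^n$, so that $G/K$ is a uniform convergence group on $S^n$; for $n=1$ this is resolved by the convergence-group theorem of Tukia, Gabai and Casson--Jungreis cited in the introduction, which realizes $G/K$ as a group acting properly and cocompactly by isometries on $\HH^2$, hence as a standard rank one group of symmetric-space type. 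For general $n$ the main obstacle is to promote the purely topological action to a smooth one: one argues that $G/K$ is a connected Lie group via the solution of Hilbert's fifth problem (Montgomery--Zippin), deduces that $X$ is quasi-isometric to a rank one symmetric space, and then appeals to the classification of rank one simple Lie groups to identify $G/K$ with the full or orientation-preserving isometry group of such a space. This passage from a homogeneous sphere boundary to Lie-theoretic rank one rigidity is the genuinely hard part, and it is precisely the input imported from \cite[Theorem~8.1]{Caprace}.
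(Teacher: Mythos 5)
This statement is not proved in the paper at all: it is imported verbatim as an external result, namely \cite[Theorem 8.1]{Caprace}, and the authors explicitly attribute it to Caprace, Cornulier, Monod and Tessera. So there is no internal proof to compare your attempt against, and the relevant question is whether your sketch is a self-contained argument. It is not. The implications $(2)\Rightarrow(1)$ and $(3)\Rightarrow(2)$ are handled correctly (and faithfulness in $(3)\Rightarrow(2)$ is indeed automatic from the definition of a standard rank one group as a quotient by the kernel of the boundary action). But the entire content lies in $(1)\Rightarrow(3)$, and there your argument ends by saying that the passage from a connected homogeneous boundary to rank one rigidity ``is precisely the input imported from \cite[Theorem~8.1]{Caprace}'' --- that is, you invoke the theorem you are trying to prove. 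As a proof this is circular at exactly the step that matters.

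Two further points on the substance of the sketch. First, the dichotomy ``a homogeneous hyperbolic boundary is either connected or totally disconnected'' is asserted with only a vague appeal to ``rigidity''; it is true but requires an argument (in \cite{Caprace} it comes out of the structure theory rather than from pure topology of the boundary). Second, the route you propose in the connected case --- first show the boundary is a sphere $S^n$, then upgrade a uniform convergence action on $S^n$ to an isometric action on a rank one symmetric space --- is not how the cited proof proceeds and would be extremely hard for $n\geq 2$ (the convergence-group theorem you cite is special to $n=1$). The actual argument in \cite{Caprace} exploits that the stabilizer of a boundary point is a closed cocompact \emph{amenable} subgroup, applies their structure theorem for amenable (focal) hyperbolic locally compact groups, and only then brings in Hilbert's fifth problem and the classification of rank one simple Lie groups. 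Your Lie-theoretic endgame is in the right spirit, but the bridge to it is missing.
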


%We give strong evidence towards this conjecture, by putting it in a very concrete context.
% This case is partially addressed by our following Theorem~\ref{theorem:sharply3intro}, but remains open.
%\begin{theorem}
%Let $G$ be a $\sigma$-compact, locally compact group and let $M$ be an infinite disconnected compact space. If $G$ acts sharply $3$-transitively on $M$, then $G$ is hyperbolic and it acts properly, continuously by automorphisms on a locally finite tree $T$, quasi-isometric to $G$. Moreover, this action has at most $2$ orbits, it is without inversions and it is sharply $3$-transitive on the boundary $\partial T$ of $T$.
%\label{theorem:sharply3}
%\end{theorem}
%\begin{proof}
%As $G$ acts sharply $3$-transitively on $M$, it acts $3$-cocompactly and $3$-properly on $M$ (see Lemma \ref{lemma:sharplytransisproper}). As $G$ acts transitively on $M$ and since $M$ is an infinite compactum, it must be perfect. Thus by Theorem~\ref{theorem:Bowditch}, the locally compact group $G$ is hyperbolic and there is a $G$-equivariant homeomorphism between $M$ and the hyperbolic boundary of $G$.
% Recall that since $M$ is compact, infinite, disconnected and $2$-homogeneous, it is necessarily perfect and totally disconnected.
%The fact that $M$ is not connected, together with the fact that $G$ acts $2$-transitively, implies that $M$ is totally disconnected. So, to conclude, one may appeal to Theorem~\ref{theorem:tree_groups} or also to Theorem $8.1$ in \cite{Caprace}, which characterizes hyperbolic groups acting transitively on their boundary.
%\end{proof}

\section{Uniform convergence groups on the Cantor Set} \label{sec:tree_groups}
	
	The goal of this section is to prove Theorem~\ref{theorem:tree_groups}, which characterizes locally compact uniform convergence groups on perfect totally disconnected compact sets as those locally compact groups acting continuously, properly and cocompactly on locally finite trees. Remark that \eqref{cond:ucg_cantor} $\Rightarrow$ \eqref{cond:hyptdbdry} is a particular case of Theorem \ref{theorem:Bowditch}, and conversely any non-elementary hyperbolic group acts as a uniform convergence group on its (perfect) boundary (see Section \ref{sc:Bowditch}). Furthermore,  it is straightforward that \eqref{cond:treegp} $\Rightarrow$ \eqref{cond:qitotree} $\Rightarrow$ \eqref{cond:hyptdbdry}. We give some clarifications needed for the remaining implication \eqref{cond:hyptdbdry}  $\Rightarrow$  \eqref{cond:treegp}, which we restate below.
	
	The tools needed are a combination of Dicks and Dunwoody's theory of structure trees \cite{DicksDun} and a graph-theoretical definition of \emph{accessibility} due to Thomassen and Woess \cite{ThomaWoess}. Some of the ideas below were first explicited for locally compact groups by Kr\"on and M\"oller \cite{KroenMoeller}. We refer the reader to the paper of Cornulier \cite{Cornulier} for a comprehensive reference on accessibility and locally compact hyperbolic groups with a totally disconnected boundary.
	\begin{prop}
		Suppose $G$ is a non-elementary hyperbolic locally compact group with totally disconnected boundary $\partial G$. Then $G$ acts continuously, properly and cocompactly on a locally finite tree $T$.
	\end{prop}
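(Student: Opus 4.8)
The plan is to run the classical structure-tree machinery in the locally compact setting, in three movements: first produce a locally finite graph on which $G$ acts well, then show that this graph is \emph{accessible}, and finally read off a locally finite tree from a structure-tree construction. Throughout, total disconnectedness of $\partial G$ is used in three distinct ways, and compact presentability of $G$ is what forces local finiteness of the final tree.

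First I would reduce to a cocompact action on a locally finite graph. By Proposition~\ref{prop:nice_space} it suffices to check that the identity component $G^\circ$ is compact. Now $G^\circ$ is a closed connected normal subgroup, so its limit set $\Lambda(G^\circ) \subseteq \partial G$ is closed and, being permuted by $G$ (as $G^\circ \lhd G$), is $G$-invariant. A connected group acting by isometries on a proper hyperbolic space has limit set of cardinality at most $2$ or else connected; since $\partial G$ is totally disconnected this forces $|\Lambda(G^\circ)| \leq 2$, and minimality of the $G$-action on the (perfect, non-elementary) boundary then forces $\Lambda(G^\circ) = \emptyset$. Hence $G^\circ$ has bounded orbits and is compact, so Proposition~\ref{prop:nice_space} yields a continuous, proper, cocompact, isometric action of $G$ on a connected locally finite graph $X$ that is $G$-equivariantly quasi-isometric to $G$; in particular $\partial X \cong \partial G$ is a Cantor set.

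Next I would record that $X$ has infinitely many ends: for a proper geodesic hyperbolic space the space of ends is homeomorphic to the set of connected components of the boundary, so the end space of $X$ is itself a Cantor set. The heart of the argument is then the accessibility of $X$. Since $G$ is hyperbolic it is compactly presented, and the locally compact analogue of Dunwoody's accessibility theorem---in the graph-theoretic formulation of Thomassen--Woess, adapted to this setting by Kr\"on--M\"oller and Cornulier---provides a uniform bound on the number of edges needed to separate any two ends of the quasi-transitive graph $X$. This is the step I expect to be the main obstacle, both because establishing accessibility is technically the deepest input and because it is precisely what prevents the structure tree below from being locally infinite.

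Finally, with accessibility in hand I would invoke the Dicks--Dunwoody theory of structure trees. Accessibility furnishes a $G$-invariant, $G$-finite, nested system of bounded edge cuts of $X$, and the associated structure tree $T$ is locally finite exactly because of the uniform bound on cut sizes together with local finiteness of $X$. The action of $G$ on $X$ induces a continuous action on $T$ by automorphisms which is cocompact (by $G$-finiteness of the cut system) and proper (the stabilizer in $G$ of a vertex or edge of $T$ stabilizes a bounded subgraph of $X$, hence is compact by properness of $G \curvearrowright X$). Total disconnectedness of $\partial X$ enters one last time to guarantee that the blocks attached to the vertices of $T$ are bounded, so that $T$ is genuinely quasi-isometric to $X$ and $\partial T \cong \partial X \cong \partial G$. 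This produces the desired locally finite tree and completes the implication.
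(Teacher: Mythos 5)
Your overall architecture is the paper's: compactness of the identity component gives a locally finite graph $X$ via Proposition~\ref{prop:nice_space}, accessibility of $X$ gives a structure tree, and total disconnectedness of the boundary is used again to control the vertex pieces. Two points of divergence are worth flagging. First, for accessibility you invoke compact presentability of $G$ together with a locally compact Dunwoody-type accessibility theorem. The paper deliberately avoids any presentability input: it observes that the ends of $X$ are in bijection with the points of the totally disconnected boundary, hence $X$ has only thin ends, and then applies the Thomassen--Woess criterion (a quasi-transitive locally finite graph with no thick ends is accessible). Your route is plausible but rests on a heavier and less directly citable theorem; the thin-ends criterion is exactly tailored to the hyperbolic, totally disconnected situation and is the cleaner input here.

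Second, and this is a genuine gap as written: your claim that ``the associated structure tree $T$ is locally finite exactly because of the uniform bound on cut sizes together with local finiteness of $X$'' is false. A uniform bound on the cuts and local finiteness of $X$ do not prevent a single block from meeting infinitely many cuts; the Bass--Serre tree of $\ZZ^2 * \ZZ$ is the standard counterexample (finite, even trivial, edge groups, cocompact action, yet the tree is locally infinite at the $\ZZ^2$-vertices). Local finiteness of $T$ must come from compactness of the \emph{vertex} stabilizers: once $G_v$ is compact, the open edge stabilizers $G_e \subset G_v$ have finite index, and finitely many edge orbits then give local finiteness. You do gesture at the right fact at the very end (``the blocks attached to the vertices of $T$ are bounded''), but you give no argument for it, and it is precisely the step where total disconnectedness must do real work. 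The paper's argument is: edge stabilizers are compact, so each $G_v$ is quasi-isometrically embedded in $G$, hence hyperbolic with $\partial G_v$ embedding into the totally disconnected $\partial G$; the structure-tree construction guarantees $G_v$ has at most one end; a one-ended hyperbolic group has connected boundary with at least two points; so $G_v$ is zero-ended, i.e.\ compact. You should supply this (or an equivalent) argument and reattribute local finiteness of $T$ to it.
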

	\begin{proof}
	We proceed in three steps.
	\begin{step} The group $G$ acts continuously, properly and cocompactly on a locally finite graph $X$.
	\end{step} 
	Indeed, the connected component of the identity $G_0$ is acts trivially on the totally disconnected boundary of $G$, so that $G_0$ is compact. Thus one may take $X$ to be a graph as in Proposition \ref{prop:nice_space}.
	\begin{step} $G$ acts continuously and cocompactly on a tree $T$ such that edge stabilizers are compact and vertex stabilizers are compactly generated with at most one end.
	\end{step}
	Since $X$ is hyperbolic, the ends of $X$ are in natural bijection with the connected components of $\partial X$, which are exactly the points of $\partial X$. It follows that $X$ is a vertex-transitive graph with no thick ends (no geodesic has both endpoints in the same end), so that $X$ is an accessible graph in the sense of Thomassen and Woess \cite{ThomaWoess}. In particular they show how to construct the desired tree $T$ using the structure trees of Dicks and Dunwoody \cite{DicksDun}.
	\begin{step} $G$ acts on $T$ with compact vertex stabilizers. \label{step:compact_vertex_stabs}
	\end{step}
	Since $G$ acts on the tree $T$  with compact edge stabilizers, for each vertex $v$ of $T$ the vertex stabilizer $G_v$ is quasi-isometrically embedded in $G$. In particular each such $G_v$ is hyperbolic, and the inclusion $G_v \hookrightarrow G$ induces an embedding $\partial G_v \hookrightarrow \partial G$. In conclusion, $G_v$ is a hyperbolic group with at most one end but with a totally disconnected boundary. However a one-ended hyperbolic group has a connected boundary with at least 2 points, so that $G_v$ is in fact $0$-ended, or in other words compact.
	
	It now follows from Step~\ref{step:compact_vertex_stabs} that $G$ acts properly on the tree $T$. We can assume, up to taking the barycentric subdivision of $T$, that $G$ acts on $T$ without inversions, so that if a vertex $v$ is an enpoint of an edge $e$, then $G_e \subset G_v$. Since $G$ acts on $T$ with finitely many edge orbits, and since edge stabilizers have finite index in vertex stabilizers containing them (being open subgroups of compact groups), it follows that $T$ is locally finite.
	\end{proof}
	
	\subsection{End-transitive graphs and groups} \label{sec:end-transitivegraphs} In this section, we sketch an alternate proof of the following theorem of Nevo
	\begin{theorem}[\cite{Nevo}] \label{theorem:Nevo} Let $X$ be a locally finite connected graph with infinitely many ends such that $\Aut(X)$ is non-compact and acts transitively on the set of ends of $X$. Then there exists a locally finite tree $T$ such that $\Aut(X)$ acts continuously, properly and cocompactly on $T$, and whose space of ends is equivariantly homeomorphic to the ends of $X$.
	\end{theorem}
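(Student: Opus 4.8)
The plan is to realize $M := \Ends(X)$, the space of ends of $X$, as a Cantor set on which $G := \Aut(X)$ acts as a non-compact transitive convergence group, and then to quote Theorem~\ref{theorem:transitiveconvergencegroup}. First I would record the soft structural facts. Since $X$ is a countable locally finite connected graph, $G$ is a second countable, hence $\sigma$-compact, locally compact group for the permutation topology, the stabilizers of finite vertex sets being compact open subgroups; it acts by homeomorphisms on the end compactification $X \cup M$, and continuously on $M$. The space $M$ is compact, totally disconnected and metrizable. As $X$ has infinitely many ends, $M$ is infinite, and as $G$ acts transitively on $M$ no end can be isolated (an isolated end would force every end to be isolated, whence $M$ finite); thus $M$ is perfect, i.e.\ a Cantor set. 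Finally $G$ is non-compact by hypothesis.

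The crux is to prove that $G \actson M$ is $3$-proper, i.e.\ a convergence action; I would verify the dynamical criterion of Section~\ref{sec:dynamicconvergenceproperty}, that every wandering net collapses, working with sequences since $M$ is metrizable. Fixing a basepoint $v_0$, a subset of $G$ is relatively compact exactly when its orbit of $v_0$ is bounded in $X$, so a wandering sequence $(\gamma_n)$ satisfies $d(v_0, \gamma_n v_0) \to \infty$; passing to a subsequence, $\gamma_n v_0 \to b$ and $\gamma_n^{-1} v_0 \to a$ for ends $a,b \in M$. I claim $\gamma_n|_{M \setminus \{a\}} \to b$ locally uniformly. Given a compact $C \subseteq M \setminus \{a\}$, each end of $C$ is separated from $a$ by a finite vertex cut, and by compactness of $C$ these combine into a single finite cut $S$ such that the ends of $C$ lie in components of $X \setminus S$ other than the component $D_a$ carrying the direction of $a$. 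For large $n$ one has $\gamma_n^{-1} v_0 \in D_a$, so $v_0$ lies in the component $\gamma_n D_a$ of $X \setminus \gamma_n S$, while $\gamma_n C$ is sent into the remaining components. Since $d(\gamma_n v_0, \gamma_n S) = d(v_0, S)$ is constant while $\gamma_n v_0 \to b$, the bounded cut $\gamma_n S$ converges to the end $b$; hence the ends contained in components of $X \setminus \gamma_n S$ not meeting $v_0$ are eventually trapped in any prescribed neighbourhood of $b$. This yields $\gamma_n(C) \to b$ uniformly, so $(\gamma_n)$ collapses and the action is a convergence action.

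With $3$-properness established I would invoke Theorem~\ref{theorem:transitiveconvergencegroup}: $G$ is $\sigma$-compact, locally compact, non-compact and acts continuously, transitively and as a convergence group on the infinite compactum $M$. The theorem provides that $G$ is a boundary-transitive hyperbolic group with $\partial G$ equivariantly homeomorphic to $M$, together with a maximal compact normal subgroup $K \lhd G$, the kernel of $G \actson M$. As $M$ is totally disconnected, the second alternative of that theorem applies: $G/K$ acts continuously, properly, cocompactly and faithfully on a locally finite tree $T$ whose visual boundary is $G/K$-equivariantly homeomorphic to $M$. Since $K$ is compact, the induced action of $G$ on $T$ is again continuous, proper and cocompact, and $\partial T = \Ends(T)$ is $G$-equivariantly homeomorphic to $M = \Ends(X)$, which is exactly the conclusion of Theorem~\ref{theorem:Nevo}.

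\emph{The main obstacle} is the convergence-action verification of the second paragraph, and within it the shadow-control step: showing that the ends separated from $v_0$ by the bounded translated cut $\gamma_n S$ actually concentrate near the end $b$. This uses only that distinct ends are separated by finite cuts, together with the fact that automorphisms are isometries displacing a bounded cut off toward $b$; the structural preliminaries and the final extraction of the tree are then immediate from the cited results.
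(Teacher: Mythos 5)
Your overall strategy coincides with the paper's: realize $\Ends(X)$ as a Cantor set on which $\Aut(X)$ acts as a non-compact transitive convergence group, deduce hyperbolicity with $\partial G$ equivariantly homeomorphic to $M$, and then extract a locally finite tree. Your second paragraph, verifying the convergence property via finite vertex cuts and the displacement of a bounded cut toward the limit end, is a genuine addition: the paper simply asserts that $\Aut(X)$ acts as a convergence group on the ends, and your argument (once one checks that for large $n$ the translated cut $\gamma_n S$ and every component of $X\setminus\gamma_n S$ not containing $v_0$ lie in the $F$-component of $b$, for any fixed finite $F$) does fill that in correctly.

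However, your final step is circular. You invoke the dichotomy in Theorem~\ref{theorem:transitiveconvergencegroup} (``$G/K$ acts properly and cocompactly on a locally finite tree with $2$-transitive boundary action'') to produce $T$. That dichotomy is exactly Theorem~\ref{theorem:boundarytransitivehyperbolic}, i.e.\ Theorem~8.1 of \cite{Caprace}, and the paper explicitly warns that this classification of boundary-transitive hyperbolic groups \emph{relies on Theorem~\ref{theorem:Nevo}} --- the very statement you are proving. Only the first assertion of Theorem~\ref{theorem:transitiveconvergencegroup} ($G$ is hyperbolic and $M$ is $G$-equivariantly homeomorphic to $\partial G$), which rests on Theorems~\ref{theorem:Bowditch} and~\ref{Theorem:Bowditch+} alone, may be used here. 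The repair is short: having obtained that $G$ is hyperbolic with totally disconnected boundary $\partial G\cong M$, conclude instead by the implication \eqref{cond:hyptdbdry}~$\Rightarrow$~\eqref{cond:treegp} of Theorem~\ref{theorem:tree_groups}, whose proof goes through Thomassen--Woess accessibility and Dicks--Dunwoody structure trees and is independent of \cite{Caprace}. This is precisely the route the paper takes, and with that substitution your argument is complete.
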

		
	The group $G = \Aut(X)$ is locally compact $\sigma$-compact and acts as a convergence group on the space of ends $M$ of $X$, which is the Cantor Set. Moreover the action is assumed to be transitive, so the same reasoning as in the proof of Theorem \ref{theorem:transitiveconvergencegroup} can be used to show that $G$ is hyperbolic such that the boundary of $G$ is equivariantly homeomorphic to $M$. Note that we cannot use the characterization of boundary-transitive hyperbolic groups in \cite{Caprace} since it relies on Theorem~\ref{theorem:Nevo}. However, since the boundary of $G$ is totally disconnected, we can conclude using Theorem~\ref{theorem:tree_groups}. Note further that both Nevo's original approach to Theorem~\ref{theorem:Nevo} and ours eventually rely on the theory of Dunwoody cuts.
	
	\begin{remark} A slightly more restrictive formulation of this result is as follows : let $G$ be a compactly generated locally compact group with infinitely many ends acting transitively on its space on ends. Then $G$ has a compact normal subgroup $K$ such that $G/K$ is a totally disconnected standard rank one group, i.e. $G$ acts continuously, properly and cocompactly on a locally finite tree $T$ such that the action on $\partial T$ is $2$-transitive.
	\end{remark}

\section{$n$-proper $n$-cocompact actions for $n\geq 4$} \label{sec:nproper}
In the remainder of this section, unless explicitly mentioned otherwise, we let $G$ be a locally compact group acting continuously on a compact, metrizable set $M$. We have seen that the $n$-properness of the action can imply strong conditions on $G$ or $M$. For example, we showed in Lemma \ref{lem:P1=P2} that every $\sigma$-compact, locally compact group acting $2$-properly and $2$-cocompactly on a metrizable compactum $M$, must itself be compact. Further, in Theorem \ref{theorem:Bowditch}, we used $3$-properness, together with $3$-cocompactness, to characterize hyperbolicity. It is natural to see what happens for the case $n\geq 4$. We prove the result below, showing that then $M$ must satisfy some {\em disconnectedness-property}.
% which was actually already stated in an old draft by Yaman. Her preprint was never published and her proof was not complete.
	\begin{theorem_finite_disconnecting} Suppose that $M$ is a {\em locally connected}, compact, metrizable space admitting an $n$-proper, $n$-cocompact action for some $n \geq 4$. Then there exists a set $P \subset M$ of cardinality at most $\lfloor \frac{n-1}{2} \rfloor$ such that $M \backslash P$ is not connected. 
	\end{theorem_finite_disconnecting}
	The result emphasizes the different behaviours for $n=4$ and $n=3$ respectively. Indeed, for $n=4$, the local connectedness of $M$ implies that $M$ has a global cut point. However, the cut-point conjecture (see \cite{B2}, \cite{SW1}) shows that whenever the boundary of a discrete hyperbolic group is connected, then it has no global cut points and it is locally connected (\cite{BM}).

Our result is also related to the following question that G. Mess asked B. Bowditch. Noting that there are no infinite groups acting sharply-$n$-transitively on a set $M$ for $n\geq 4$, this question occurs very naturally.
\begin{question}
Does there exist an infinite locally compact group $G$ acting continuously, $n$-properly and $n$-cocompactly on a compact, perfect metrizable space $M$ for $n \geq 4$?
\end{question}
Our result gives a necessary condition on $M$ for such an action to exist.\\

\noindent We start our exposition with a general lemma exploring the interaction of $n$-properness and $m$-cocompactness.
	\begin{lemma} \label{lemma:NpropMcocomp} Let $m > n \geq 1$ be two integers and let $G$ be a locally compact group acting continuously, $n$-properly and $m$-cocompactly on a locally compact space $M$. Then $M$ is discrete.
	\end{lemma}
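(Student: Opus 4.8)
The plan is to show that for every $\xi=(x_1,\dots,x_n)\in M^{(n)}$ the complement $M\backslash\{x_1,\dots,x_n\}$ is compact, and then to extract discreteness from the tension between compactness and the Hausdorff property. First I would dispose of the trivial cases: a finite Hausdorff space is discrete, and if $|M|<m$ then $M^{(m)}=\emptyset$ and $M$ is finite; so I may assume $M$ is infinite, in particular $|M|\ge m$. Next I would reduce to the case $m=n+1$. Projection onto the first $n+1$ coordinates $M^{(m)}\to M^{(n+1)}$ is continuous, $G$-equivariant, and surjective (here $|M|\ge m$ is used), so it carries an $m$-cocompact action to an $(n+1)$-cocompact one while leaving $n$-properness untouched. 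I then fix a compact $K\subset M^{(n+1)}$ with $M^{(n+1)}=GK$, let $\pi\colon M^{(n+1)}\to M^{(n)}$ forget the last coordinate, and set $L=\pi(K)$, a compact subset of $M^{(n)}$.

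The heart of the argument is to prove that the fibre $S_\xi:=\pi^{-1}(\xi)$ is compact, noting that $S_\xi$ is homeomorphic to $M\backslash\{x_1,\dots,x_n\}$. Each $s\in S_\xi$ can be written $s=\gamma k$ with $\gamma\in G$ and $k\in K$, and then $\gamma^{-1}\xi=\pi(\gamma^{-1}s)=\pi(k)\in L$. This is where $n$-properness enters: applying properness of the action on $M^{(n)}$ to the compact set $L\cup\{\xi\}$ shows that $\{\gamma\in G\mid \gamma\xi\in L\}$ is relatively compact; let $C$ be its (compact) closure. Since $\gamma^{-1}\xi\in L$ we get $\gamma^{-1}\in C$, hence $\gamma\in C^{-1}$ and $S_\xi\subseteq C^{-1}K$. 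The latter set is compact by continuity of the action, and $S_\xi=\pi^{-1}(\xi)$ is closed in $M^{(n+1)}$ because $M^{(n)}$ is Hausdorff, so $S_\xi$ is compact. I expect this to be the main obstacle, as it is the only point at which the two hypotheses genuinely interact, and one must check that the bound on the group elements is uniform over the entire fibre rather than pointwise.

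Finally I would convert compactness of $N:=M\backslash\{x_1,\dots,x_n\}$ into discreteness. The set $N$ is open in $M$ (its complement is finite, hence closed in the Hausdorff space $M$) and compact, so it is clopen; therefore $\{x_1,\dots,x_n\}$ is open, and separating these finitely many points by the Hausdorff property shows each $x_i$ is isolated. Since $\xi\in M^{(n)}$ was arbitrary and $M$ is infinite, every point of $M$ arises as a coordinate of some distinct $n$-tuple, so every point is isolated and $M$ is discrete. The remaining routine verifications are that the various projections are open/surjective and that $C^{-1}K$ is compact, both of which follow immediately from continuity of the action and $|M|\ge m$.
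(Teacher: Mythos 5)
Your proof is correct, but it takes a genuinely different route from the paper's. The paper argues by contradiction: assuming $M$ is non-discrete, it picks an accumulation point $x_1$, a sequence $y_k \to x_1$, and the degenerating tuples $\theta_k = (y_k, x_1, \ldots, x_n)$, which leave every compact subset of $M^{(n+1)}$; $(n+1)$-cocompactness produces elements $\gamma_k$ pushing the $\theta_k$ into a fixed compactum, the net $(\gamma_k)_k$ is then forced to be wandering, and yet $\gamma_k(x_1,\ldots,x_n)$ converges in $M^{(n)}$, contradicting $n$-properness. You instead give a direct argument: using the same interaction between $(n+1)$-cocompactness and $n$-properness, you show that every fibre of the forgetful projection $M^{(n+1)} \to M^{(n)}$ is contained in a set of the form $C^{-1}K$ and hence compact, identify that fibre with $M \setminus \{x_1,\ldots,x_n\}$, and conclude that this complement is clopen, so the $x_i$ are isolated. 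All the individual steps check out (the uniformity of the bound over the fibre is exactly handled by applying properness to $L \cup \{\xi\}$, and the reduction to $m = n+1$ via the coordinate projection is the same reduction the paper uses implicitly when it says ``$m$-cocompact, in particular $(n+1)$-cocompact''). What your approach buys: it avoids nets, subnets and accumulation points entirely, and it proves strictly more than the stated lemma --- since $M \setminus \{x_1,\ldots,x_n\}$ is compact and (by the conclusion) discrete, it is finite, so $M$ itself must be finite under the hypotheses. What the paper's approach buys is brevity and the fact that it isolates exactly the dynamical contradiction (a wandering net with a non-escaping orbit on $M^{(n)}$) that recurs elsewhere in the text.
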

	\begin{proof} Suppose that $M$ is non-discrete. Hence $M$ is infinite and contains an accumulation point $x_1$. Pick $x_2, \ldots, x_n$ in $M$ so that $x_i \neq x_j$ for $1 \leq i \neq j \leq n$. Pick also a sequence $(y_k)_k \to x_1$ such that $y_k \neq x_i$ for any $k$ and any $1 \leq i \leq n$. In particular,  $\theta_k = (y_k,x_1,\ldots,x_n) \in M^{(n+1)}$ leaves every compact subset of $M^{(n+1)}$. Since the action is $m$-cocompact, it is in particular $(n+1)$-cocompact so there is a compact subset $K \subset M^{(n+1)}$ and elements $\gamma_k \in G$ such that $\gamma_k(\theta_k) \subset K$. After taking a subnet, we can suppose that $\gamma_k(\theta_k) \to \theta = (z_1,\ldots,z_{n+1}) \in K$. Now since the $\theta_k$ leave every compact subset of $M^{(n+1)}$, the net $(\gamma_k)_k$ is wandering. But this contradicts $n$-properness, since $\gamma_k (x_1, \ldots, x_n) \to (z_2, \ldots, z_{n+1}) \in M^{(n)}$.
	\end{proof}

In order to prove Theorem~\ref{theorem:finitedisconnecting}, we elaborate on a result by Gerasimov (Lemma Pr1 in \cite{Gerasimov}). In order to formulate it, note that each $\gamma\in G$ acts as a homeomorphism of $M$ so we can consider its graph as a closed subset of $M\times M$ defined by $\graph(\gamma)=\{(x,\gamma(x))\mid x\in M\}$. As $M\times M$, equipped with the product metric, is a metric space, we can equip $\Closed(M\times M)$, the set of closed subsets of $M\times M$, with the Hausdorff metric. The induced topology on $\Closed(M\times M)$ coincides with the {\bf Vietoris topology} as used by Gerasimov.

Recall that a group $G$ acts $3$-properly on a compact metrizable set if and only if every wandering sequence has a collapsing subsequence. We can also express 
this as follows: given a wandering sequence $(\gamma_i)_i$ in $G$, there exists a subsequence $(\gamma_j)_j\subset (\gamma_i)_i$ and $b,c\in M$ such that $\graph(\gamma_j)_j$ converges in $\Closed(M\times M)$ to a subset of the set $\{b\}\sharp \{c\}:= (\{b\}\times M) \cup (M\times \{c\})$. In \cite{Gerasimov}, Gerasimov generalized this result from $3$-proper actions to $n$-proper actions for any $n\geq 2$. 
\begin{prop}[Lemma Pr1 in \cite{Gerasimov}] Let $n \geq 2$ and let $G$ be a locally compact group acting continuously by homeomorphisms on a compact metrizable space $M$. Equip $\Closed(M\times M)$ with the Vietoris topology. Then $G$ acts $n$-properly on $M$ if and only if for any wandering sequence $(\gamma_i)_i$, there is a subsequence $(\alpha_i)_i\subset (\gamma_i)_i$ and nonempty finite sets $P, Q \subset M$ with $|P| + |Q| \leq n-1$  such that $\graph(\alpha_i)_i\to Z$ in $\Closed(M\times M)$ for some subset $Z \subset P\sharp Q:=(P\times M) \cup (M\times Q)$.
\label{prop:Gerasimov}
\end{prop}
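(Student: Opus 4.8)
The plan is to prove the two implications separately, the common engine being that since $M\times M$ is a compact metric space, the hyperspace $\Closed(M\times M)$ with the Hausdorff metric is itself compact; thus any sequence of graphs $\graph(\gamma_i)$ (each a nonempty compact set homeomorphic to $M$) admits a subsequence converging to some closed $Z\subseteq M\times M$. The central object is this limit $Z$, which I will treat as a bipartite relation between two copies of $M$: an \emph{edge} is a point $(x,y)\in Z$, a \emph{matching} is a finite subset of $Z$ with pairwise distinct first coordinates and pairwise distinct second coordinates, and a \emph{cover} is a pair $(P,Q)$ with $Z\subseteq(P\times M)\cup(M\times Q)=P\sharp Q$. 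Throughout I use the two standard semicontinuity properties of Hausdorff convergence $\graph(\alpha_i)\to Z$: every limit of points $p_i\in\graph(\alpha_i)$ lies in $Z$, and conversely every point of $Z$ is such a limit.

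For the direction asserting that the graph condition implies $n$-properness I would argue by contradiction. If the action is not $n$-proper, there is a compact $K\subset M^{(n)}$ and a wandering sequence $\gamma_i$ with $\gamma_i K\cap K\neq\emptyset$; choosing $\theta_i=(x_i^1,\dots,x_i^n)\in K$ with $\gamma_i\theta_i\in K$ and passing to subsequences, I may assume $\theta_i\to(x^1,\dots,x^n)$ and $\gamma_i\theta_i\to(y^1,\dots,y^n)$ in $M^{(n)}$, so the $x^j$ are distinct and the $y^j$ are distinct. Applying the hypothesis to $\gamma_i$ yields a subsequence $\alpha_i$ and a cover $P\sharp Q$ with $|P|+|Q|\le n-1$ and $\graph(\alpha_i)\to Z\subseteq P\sharp Q$. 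Since $(x_i^j,\alpha_i x_i^j)\to(x^j,y^j)$, upper semicontinuity gives $(x^j,y^j)\in Z$, so for each $j$ either $x^j\in P$ or $y^j\in Q$. Because the $x^j$ are distinct and the $y^j$ are distinct, the sets $S=\{j:x^j\in P\}$ and $T=\{j:y^j\in Q\}$ satisfy $|S|\le|P|$ and $|T|\le|Q|$; as $S\cup T=\{1,\dots,n\}$ this forces $n\le|P|+|Q|\le n-1$, a contradiction.

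For the converse, the substantial direction, assume $n$-properness, take a wandering sequence and pass to $\alpha_i$ with $\graph(\alpha_i)\to Z$. First I would bound the matching number of $Z$: if $Z$ contained a matching $(x^1,y^1),\dots,(x^n,y^n)$ of size $n$, then by lower semicontinuity each is a limit $(x_i^j,\alpha_i x_i^j)\to(x^j,y^j)$, so for large $i$ the tuples $\theta_i=(x_i^1,\dots,x_i^n)$ and $\alpha_i\theta_i$ lie in $M^{(n)}$ and converge there; since $\alpha_i$ is wandering this contradicts $n$-properness. Hence every matching in $Z$ has size at most $n-1$, i.e. $\nu(Z)\le n-1$. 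The heart of the proof is then to convert this bounded matching number into a finite cover by invoking the form of König's duality theorem valid for an arbitrary bipartite graph of \emph{finite} matching number, which produces a cover of size exactly $\nu(Z)$. Concretely, denoting the two copies of $M$ by $A$ (first coordinate) and $B$ (second coordinate), fixing a maximum matching $M_0\subset Z$ and letting $T\subseteq A\sqcup B$ be the set of vertices reachable from the $M_0$-unmatched vertices of $A$ by $M_0$-alternating paths, the sets $P=A\setminus T$ and $Q=B\cap T$ form a cover with $|P|+|Q|=\nu(Z)\le n-1$; the verification that this is a cover, that all its vertices are matched, and that no $M_0$-edge is doubly covered is the standard alternating-path argument, which uses only that alternating paths are finite and so applies verbatim to the possibly uncountable relation $Z$. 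This yields $Z\subseteq(P\times M)\cup(M\times Q)$. Finally, since each $\graph(\alpha_i)$ surjects onto $M$ under both coordinate projections, so does $Z$; hence neither $P$ nor $Q$ can contain all of the infinite set $M$, which forces both to be nonempty.

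I expect the main obstacle to be precisely this König-type duality for the infinite relation $Z$: one must be sure it is the finiteness of the matching number, rather than of the vertex set, that drives the existence of a finite cover, and that the alternating-reachability construction survives $Z$ having uncountably many vertices and edges. A secondary, bookkeeping difficulty is the careful passage between points of the Hausdorff limit $Z$ and approximating points on the graphs $\graph(\alpha_i)$ while keeping tuples inside $M^{(n)}$; this is where compactness of $M^{(n)}$ and the semicontinuity of Hausdorff limits are used, and where the degenerate case of finite $M$ (for which the nonemptiness of $P$ and $Q$ must be arranged by hand) should be dispatched separately.
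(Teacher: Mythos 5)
The paper does not actually prove this proposition: it is quoted as Lemma Pr1 of \cite{Gerasimov}, so there is no internal proof to compare against, and your attempt must be judged on its own. On those terms it is correct, and the route is plausible as a reconstruction. The forward direction is exactly right: covering each limit pair $(x^j,y^j)\in Z\subset P\sharp Q$ and counting $|\{j:x^j\in P\}|\le |P|$, $|\{j:y^j\in Q\}|\le |Q|$ with union $\{1,\dots,n\}$ gives $n\le n-1$. The converse correctly assembles three ingredients: compactness of the hyperspace of the compact metric space $M\times M$ (so graphs subconverge; the paper notes the Vietoris topology agrees with the Hausdorff metric topology here), the two semicontinuity properties of Hausdorff limits, and the bound $\nu(Z)\le n-1$ on the matching number of the limit relation, which your wandering/properness argument establishes cleanly (the compact set consisting of the two convergent tuple-sequences together with their limits lies in $M^{(n)}$ because $M^{(n)}$ is open in $M^n$). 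Your identification of the crux is also sound: K\"onig duality does survive infinite vertex sets provided the matching number is finite --- a maximum matching exists since matching sizes are integers bounded by $n-1$, alternating paths are finite, $A\setminus T$ and $B\cap T$ consist only of matched vertices (an unmatched vertex of $B\cap T$ would give an augmenting path), and the injection of the cover into the maximum matching yields $|P|+|Q|\le\nu(Z)$; in particular both sets are automatically finite.

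Three smaller points deserve explicit treatment. First, extracting a wandering sequence from the closed non-compact set $\{\gamma: \gamma K\cap K\neq\emptyset\}$ needs a word when $G$ is not $\sigma$-compact: use the open $\sigma$-compact subgroup generated by a compact neighbourhood of the identity --- either the set meets infinitely many cosets (a sequence through distinct cosets is wandering, since a compact set meets only finitely many), or it lies in a $\sigma$-compact open set admitting a suitable exhaustion. Second, your nonemptiness argument via double surjectivity of the projections of $Z$ presumes $M$ infinite, and for $n=2$ it produces not a cover but a contradiction ($\nu(Z)\le 1$ forces a one-sided cover, violating surjectivity); the correct reading is that no wandering sequence can then exist, i.e.\ $G$ is compact --- in effect reproving Lemma~\ref{lem:P1=P2} --- so the condition holds vacuously. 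Your write-up reads as if $P,Q$ are always produced, so this should be said. Third, as you yourself flag, the statement read literally fails in the degenerate case $n=2$ with $M$ a singleton (the action is vacuously $2$-proper on $M^{(2)}=\emptyset$, yet no nonempty $P,Q$ with $|P|+|Q|\le 1$ exist), so finite $M$ must be excluded or handled separately; since the paper only applies the proposition to infinite perfect compacta, nothing is lost.
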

\begin{notation}
In all that follows, we let $(\gamma_i)_i\subset G$ denote a wandering sequence for which there exist nonempty finite sets $P, Q \subset M$ as above such that $\graph(\gamma_i)_i\to Z \subset P\sharp Q:=(P\times M) \cup (M\times Q)$ in Closed$(M\times M)$.
\end{notation}
% Fix $n\geq 1$ and a wandering sequence $(\gamma_i)_i$ in $\Gamma$. We let $P,Q\subset M$ be subsets as in Proposition \ref{prop:Gerasimov}.
		\begin{lemma}
Choose any compact, connected subset $K\subset M\backslash P$. For $x\in K$, take a subsequence $(\alpha_i)_i$ of $(\gamma_i)_i$ such that $(\alpha_i(x))_i$ converges to some $q\in Q$. Then $(\alpha_i)_i$ converges uniformly over $K$ to $q$.
\label{lemma:uniformconvergence}
		\end{lemma}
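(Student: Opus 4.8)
The plan is to convert the Vietoris convergence $\graph(\gamma_i)\to Z\subset P\sharp Q$ into a uniform coordinatewise dichotomy and then exploit connectedness of $K$. First I would equip $M\times M$ with the max product metric and recall that Hausdorff convergence supplies, for every $\epsilon>0$, an index $N$ with $\graph(\gamma_i)\subset N_\epsilon(Z)\subset N_\epsilon(P\sharp Q)$ for all $i\geq N$ (here $N_\epsilon(\cdot)$ denotes the metric $\epsilon$-neighborhood; note $P\sharp Q$ is closed since $P,Q$ are finite and $M$ is compact). Unwinding $P\sharp Q=(P\times M)\cup(M\times Q)$ for the max metric, this says precisely that for each $x\in M$ and each $i\geq N$, either $d(x,P)<\epsilon$ or $d(\gamma_i(x),Q)<\epsilon$. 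The same holds along the subsequence $(\alpha_i)$, since Hausdorff convergence passes to subsequences.

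Next I would use that $K$ is compact and disjoint from the closed finite set $P$, so $\delta:=d(K,P)>0$. Taking $\epsilon<\delta$ forces the second alternative for every $x\in K$, whence $\gamma_i(K)\subset \overline{N_\epsilon(Q)}$ for all large $i$. Shrinking $\epsilon$ further so that $\epsilon<\tfrac12\min\{d(q',q'')\mid q'\neq q''\in Q\}$, the closed balls $\overline{B(q',\epsilon)}$, $q'\in Q$, are pairwise disjoint. Since each $\gamma_i$ is a homeomorphism, $\gamma_i(K)$ is connected, and a connected set contained in a disjoint union of finitely many closed balls must lie in a single one of them; thus $\gamma_i(K)\subset\overline{B(q_i',\epsilon)}$ for some $q_i'\in Q$ depending on $i$.

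It remains to identify the ball, and this is where the chosen point $x$ enters. By hypothesis $\alpha_i(x)\to q$, so $\alpha_i(x)\in B(q,\epsilon)$ for all large $i$; since $\alpha_i(x)\in\alpha_i(K)\subset\overline{B(q_i',\epsilon)}$, disjointness of the balls forces $q_i'=q$. Therefore $\alpha_i(K)\subset\overline{B(q,\epsilon)}$ for all large $i$, that is, $\sup_{y\in K} d(\alpha_i(y),q)\leq\epsilon$. As $\epsilon>0$ was arbitrary (each successive restriction on $\epsilon$ only shrinks it), this yields that $(\alpha_i)$ converges to $q$ uniformly on $K$.

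The only genuinely delicate point is the passage from Hausdorff convergence of the graphs to the pointwise dichotomy, together with correctly bookkeeping the product metric; the remaining ingredients are the strict positivity of $d(K,P)$, the finiteness of $Q$, and the elementary topological fact about connected sets meeting disjoint balls. Connectedness of $K$ is used exactly once, precisely to exclude the \emph{a priori} possibility that $\alpha_i$ could spread $K$ across several points of $Q$ at once.
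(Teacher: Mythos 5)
Your proof is correct and follows essentially the same route as the paper: shrink $\epsilon$ below both $d(K,P)$ and half the minimal gap between points of $Q$, use the Vietoris/Hausdorff convergence of the graphs into a neighbourhood of $P\sharp Q$ to force the ``$Q$-alternative'' on all of $K$, and then use connectedness plus the marker point $x$ to pin down a single $q\in Q$. The only cosmetic difference is that the paper applies the connectedness argument to $\graph(\alpha_n|_K)$ inside $M\times M$ while you apply it to the image $\alpha_i(K)$; the two are interchangeable here.
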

	\begin{proof}
Let $r:=\min_{a \neq b\in Q}(d(a,b))$, i.e. the distance between horizontal 
lines in $P\sharp Q$ is at least $r$. If $r=0$, then we redefine $r:=\infty$. Choose any 
$0<\epsilon <\min(r/2,d(K,P))$ and take $N\in \NN$ such that 
$d(\alpha_n(x),q)<\epsilon$ for all $n\geq N$. Enlarging $N$ if necessary, Proposition \ref{prop:Gerasimov} implies that the graph of $\alpha_n$ lies in the $\epsilon$-neighbourhood of $P\sharp Q$ for all $n\geq N$. Because $\graph((\alpha_n)_{\mid K})$ is connected, because it contains $(x,\alpha_n(x))$, and because it lies in an $\epsilon$-neighbourhood of $P\sharp Q$ but more 
than $\epsilon$ away from $P\times M$, we conclude that $\alpha_n(K)$ lies in the $\epsilon$-neighbourhood of $q$.
	\end{proof}

		\begin{prop}
If $M$ is a {\em locally connected}, compact, metrizable space, then there is a subsequence $(\alpha_i)_i$ of $(\gamma_i)_i$ which converges pointwise everywhere on $M$. Moreover, it converges pointwise to a constant function on connected components of $M\backslash P$ and it converges uniformly on compact connected subsets of $M\backslash P$.
	\label{prop:uniform}
		\end{prop}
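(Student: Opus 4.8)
The plan is to produce a single subsequence by a diagonal extraction and then to promote pointwise convergence at one point of each piece of $M \setminus P$ to uniform convergence on that whole piece, using Lemma~\ref{lemma:uniformconvergence} as the engine.

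First I would record the structural facts about $M \setminus P$ that make a diagonal argument legitimate. Since $M$ is compact metrizable it is second countable, and since $M$ is locally connected the connected components of the open set $M \setminus P$ are open; being pairwise disjoint nonempty open sets in a second countable space, there are at most countably many of them, say $C_1, C_2, \dots$. Each $C_m$ is open in $M$, hence locally compact, and inherits local connectedness. The key geometric input I would isolate is a \emph{chaining property}: in a connected, locally connected, locally compact metric space any two points lie in a common compact connected subset. To prove it, call two points equivalent when some compact connected set contains both; this is an equivalence relation, since the union of two compact connected sets meeting in a point is again compact and connected. Each class is open: every $x$ has a connected open neighbourhood $U$ with $\overline{U}$ compact and contained in the space (local compactness together with local connectedness), and then $\overline{U}$ is a compact connected set containing $x$ and all of $U$, so $U$ lies in the class of $x$. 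By connectedness there is a single class, which gives the claim inside each $C_m$.

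Next I would set up the extraction. In each $C_m$ fix a base point $x_m$. Because $\graph(\gamma_i) \to Z \subseteq P\sharp Q$ and $x_m \notin P$, every limit point of $(\gamma_i(x_m))_i$ lies in $Q$: a limit of points $(x_m, \gamma_i(x_m)) \in \graph(\gamma_i)$ lands in the closed set $Z$, and since $x_m \notin P$ it cannot lie in $P \times M$, hence lies in $M \times Q$. As $Q$ is finite and $M$ is compact, a diagonal argument over the countable list $x_1, x_2, \dots$ together with the finitely many points of $P$ produces a subsequence $(\alpha_i)_i$ such that $\alpha_i(x_m) \to q_m \in Q$ for every $m$ and $\alpha_i(p)$ converges for every $p \in P$. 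This already secures pointwise convergence on $P$.

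Finally I would transfer this to all of $M \setminus P$. Fix $m$ and $y \in C_m$; by the chaining property there is a compact connected $K \subseteq C_m$ containing both $x_m$ and $y$. Since $\alpha_i(x_m) \to q_m \in Q$ with $x_m \in K$, Lemma~\ref{lemma:uniformconvergence} gives $\alpha_i \to q_m$ uniformly on $K$, so in particular $\alpha_i(y) \to q_m$. Hence $(\alpha_i)_i$ converges pointwise on $C_m$ to the constant $q_m$, and together with convergence on $P$ it converges pointwise on all of $M$. For the uniform statement, any compact connected $K \subseteq M \setminus P$ lies in a single $C_m$; picking any $z \in K$ we have $\alpha_i(z) \to q_m \in Q$ by the previous step, so a second application of Lemma~\ref{lemma:uniformconvergence} yields uniform convergence of $\alpha_i$ to $q_m$ on $K$. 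I expect the main obstacle to be exactly this upgrade from a single point to the whole component: one must simultaneously guarantee that there are only countably many components (so the diagonal extraction is valid) and establish the chaining property, so that the constant value $q_m$ fixed at the base point propagates, through Lemma~\ref{lemma:uniformconvergence}, across all of $C_m$.
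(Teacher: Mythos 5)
Your proof is correct, and its overall architecture matches the paper's: a diagonal extraction over the (at most countably many, open) components of $M\backslash P$ together with the finite set $P$, followed by propagation of the limit from one base point to the whole component via Lemma~\ref{lemma:uniformconvergence}. The one place where you genuinely diverge is in producing a compact connected subset of a component $C$ of $M\backslash P$ containing two prescribed points. The paper gets this from Peano continuum theory: the component of $x$ in $M$ is a connected, locally connected, compact metric space, hence locally path connected (a nontrivial classical theorem), so the open subset $C$ is path connected and the image of a path joining $x$ to $y$ serves as the compact connected set $K$. You instead prove directly that in a connected, locally connected, locally compact metric space the relation ``lie in a common compact connected set'' is an equivalence relation with open classes, hence has a single class; your verification (closures of small connected open neighbourhoods are compact and connected, and unions of compact connected sets sharing a point are again compact and connected) is sound, provided $\overline{U}$ is read as the closure in $M$, which the local compactness of the open set $C$ guarantees stays inside $C$, exactly as you indicate. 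Your route is more elementary in that it avoids the arcwise-connectedness theorem for Peano continua, at the cost of a slightly longer hands-on argument; both yield precisely the hypothesis needed to invoke Lemma~\ref{lemma:uniformconvergence}. You also make explicit a point the paper leaves implicit, namely that the Vietoris convergence $\graph(\gamma_i)\to Z\subset P\sharp Q$ forces every limit point of $(\gamma_i(x))_i$ for $x\notin P$ to lie in the finite set $Q$, which is what legitimizes the initial extraction at the base points.
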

	\begin{proof}
Choose any $x\in M\backslash P$ and take a subsequence $(\alpha_i(x))_i$ of $(\gamma_i(x))_i$ which converges to some $q\in Q$. Given $y$ in the connected component of $x$ in $M\backslash P$, we show first that it lies in a compact connected set $K\subset M\backslash P$ containing $x$. Using Lemma \ref{lemma:uniformconvergence}, this implies that $(\alpha_i(y))_i$ converges to $q$.

The connected component of $x$ in $M$ is locally path connected because it is a connected, locally connected, compact metrizable space. The connected component $C$ of $x$ in $M\backslash P$ is therefore also locally path connected. Consequently, any point $y\in C$ can be connected to $x$ via a path in $C$. This path is a compact connected subset of $M\backslash P$ containing both $x$ and $y$, as desired.

Finally, let us prove pointwise convergence on $M$. To this end, note that connected components of $M\backslash P$ are open because of local connectedness. Because $M$ is second countable, and because connected components of $M\backslash P$ are disjoint, there can be only countably many, say $C_1,C_2, \ldots$. Choose first $x_1\in C_1$ and take a subsequence $(\alpha_i^1)_i$ of $(\gamma_i)_i$ such that $(\alpha_i^1(x_1))_i$ converges to some $q_1\in Q$. Next, continue this process. So, choose $x_2\in C_2$ and take a subsequence $(\alpha_i^2)_i$ of $(\alpha_i^1)_i$ such that $(\alpha_i^2(x_2))$ converges to some $q_2\in Q$. Then choose $x_3\in C_3$ and so on. The sequences $((\alpha_i^i)(x_j))_i$  all converge to an element of $Q$. As shown earlier in the proof, the sequence $(\alpha_i^i)$ thus converges to the constant map on every connected component of $M\backslash P$. Finally, taking a last subsequence, we obtain additionally that it also converges on $P$ and so on the whole of $M$. 
	\end{proof}
	
We now have the necessary tools to give the proof of Theorem \ref{theorem:finitedisconnecting}.
% 	\begin{theorem}\label{theorem:finitedisconnecting} Suppose that $M$ is a \textbf{locally connected}, compact metrizable space admitting an $n$-proper but not $3$-proper action for some $n \geq 4$. Then there exists a set $P \subset M$ of cardinality at most $\lfloor \frac{n-1}{2} \rfloor$ such that $M \backslash P$ is disconnected.
% 	\end{theorem}
	\begin{proof}[Proof of Theorem \ref{theorem:finitedisconnecting}]
%\marginpar{TODO: Circumvent the problems in Yaman's preprint! Use metrizability + Gerasimov + local connectedness to get paths} We just have to modify slightly Yaman's arguments. 
Since the action is $n$-proper and $n$-cocompact for $n\geq 4$, we can assume that it is not $3$-proper by Lemma \ref{lemma:NpropMcocomp}. So, we can fix a wandering sequence $(\gamma_i)_i$ such that no subsequence is collapsing. As in Proposition \ref{prop:uniform}, take a subsequence, denoted $(\alpha_i)_i$, and sets $P,Q\subset M$ such that $(\alpha_i)_i$ converges pointwise everywhere on $M$ and $\graph(\alpha_i)\to Z \subset P\sharp Q$. We can assume without loss of generality that there is no proper subset $P' \subset P$ such that $Z \subset P'\sharp Q$, and similarly there is no proper subset $Q' \subset Q$ such that $Z \subset P \sharp Q'$. Denoting $\lvert P\rvert =r, \lvert Q \rvert=s$, we obtain $3 \leq r+s \leq n-1$. Up to exchanging $(\alpha_i)$ with $(\alpha_i^{-1})$ we can assume $s \geq r$ (in particular $s \geq 2$, and $r \leq \lfloor \frac{n-1}{2} \rfloor$). Now, if $C$ is a connected component of $M \backslash P$, then there is a point $q \in Q$ such that $\alpha_i$ converges pointwise on $C$ to $q$ (Proposition \ref{prop:uniform}). Moreover, by minimality of $Q$ each element $q \in Q$ arises as a limit in this fashion. Since $s \geq 2$ there are at least two connected components of $M \backslash P$. 
	\end{proof}
We will state one corollary of this result. Let us introduce the following definition.
\begin{defn}
Given $m \in \N_0$, we say that a topological space $M$ is {\bf $m$-homogeneous}, if the homeomorphism group of $M$ acts $m$-transitively on $M$.
\end{defn}
		\begin{corollary} \label{corollary:nproperCantor} Let $n \geq 4$, and let $m = \lfloor \frac{n-1}{2} \rfloor+2$. If $G$ acts $n$-properly, $n$-cocompactly on a {\em locally connected} metrizable $m$-homogeneous compact set $M$, then $M$ is finite. %either finite or the Cantor Set.
	\end{corollary}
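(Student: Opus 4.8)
The plan is to argue by contradiction: suppose $M$ is infinite. Since $M$ then has at least $m$ points, the $m$-homogeneity of $M$ forces $\Homeo(M)$ to act $j$-transitively for every $1\le j\le m$, so in particular $M$ is homogeneous. Writing $k=\lfloor\frac{n-1}{2}\rfloor$ so that $m=k+2$, Theorem~\ref{theorem:finitedisconnecting} supplies a set $P\subset M$ with $|P|=c\le k$ and $M\backslash P$ disconnected; I would fix two points $a,b\in M\backslash P$ lying in different connected components of $M\backslash P$.

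The key step is to promote this single separation to \emph{all} subsets of size $c$ using homogeneity. Given any $c$-element set $S=\{s_1,\dots,s_c\}$ and any two distinct $x,y\in M\backslash S$, the tuples $(a,b,p_1,\dots,p_c)$ and $(x,y,s_1,\dots,s_c)$ both lie in $M^{(c+2)}$, and since $c+2\le m$ I can invoke $(c+2)$-transitivity to produce $h\in\Homeo(M)$ with $h(a)=x$, $h(b)=y$ and $h(P)=S$. As homeomorphisms carry connected components of $M\backslash P$ onto connected components of $M\backslash h(P)=M\backslash S$, the points $x=h(a)$ and $y=h(b)$ end up in distinct components of $M\backslash S$; since $x,y$ were arbitrary, this shows that $M\backslash S$ is totally disconnected, for every $c$-subset $S$.

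To finish, I would fix one such $S$ (say $S=P$): then $M\backslash S$ is a cofinite, hence open, subset of $M$, so it is locally connected, and being also totally disconnected it must be discrete (a connected open neighbourhood of a point is forced to be the singleton, so every point is isolated in $M\backslash S$). Because $M\backslash S$ is open in $M$, each of its points is then isolated in $M$, and as $M\backslash S\neq\emptyset$ this hands us an isolated point of $M$. Homogeneity then makes every point isolated, so $M$ is discrete, and compactness makes it finite, contradicting the assumption that $M$ is infinite. The main obstacle I anticipate is the homogeneity-transfer step: one must verify that $c+2\le m$ (which is precisely what the choice $m=\lfloor\frac{n-1}{2}\rfloor+2$ guarantees, via Theorem~\ref{theorem:finitedisconnecting}) so that the available degree of transitivity suffices, and that sending the distinguished pair $a,b$ together with $P$ genuinely certifies separation of the arbitrary pair $x,y$ across $S$; after that, the implication ``totally disconnected $+$ locally connected $\Rightarrow$ discrete'' is the only remaining point needing care.
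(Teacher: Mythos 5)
Your argument is correct and follows essentially the same route as the paper: invoke Theorem~\ref{theorem:finitedisconnecting} to get the finite separating set $P$, use the $m$-homogeneity (with $|P|+2\le m$) to transport the separation and conclude that $M\backslash P$ is totally disconnected, and then combine this with local connectedness and compactness to deduce that $M$ is discrete and finite. The only cosmetic differences are that the paper works with a homeomorphism fixing $P$ setwise and one of the two points, whereas you move the witness pair to an arbitrary pair, and that you route the final discreteness step through homogeneity rather than concluding directly that $M$ is totally disconnected; both variants are sound.
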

	\begin{proof} 
	%By 2-homogeneity, $M$ is either connected or totally disconnected. If it is totally disconnected, then by local connectedness, $M$ must be discrete. By compactness, $M$ must be finite. For the remainder, we can thus assume that $M$ is connected. By Theorem~\ref{theorem:finitedisconnecting}, there is a set $P$ with $m-2$ points such that $M \backslash P$ is disconnected. 
Let $P$ be a set as in Theorem \ref{theorem:finitedisconnecting}, so $\lvert P \rvert \leq \lfloor \frac{n-1}{2} \rfloor$ and $M\backslash P$ is not connected. Assume first by contradiction that $M \backslash P$ is not totally disconnected.
	 Then, since $M \backslash P$ is not connected, there exist $C_1$ and $C_2$ two distinct connected components of $M \backslash P$ and elements $x \neq y \in C_1$ and $z \in C_2$. Since $M$ is $m$-homogeneous, there exists a homeomorphism $\varphi$ of $M$ such that $\varphi(P) = P$, $\varphi(x) = x$ and $\varphi(y) = z$. Clearly, $\varphi$ maps $x$ and $y$ to distinct connected components of $M \backslash P$, which is not possible since $\varphi$ is a homeomorphism fixing $P$: a contradiction.
	
	As $P$ is finite and $M$ is locally connected, we so conclude that $M$ is totally disconnected and so $M$ must be discrete, hence finite by compactness.
	\end{proof}
% \end{document}	
	%\input{references}
	\bibliographystyle{amsalpha}
\providecommand{\bysame}{\leavevmode\hbox to3em{\hrulefill}\thinspace}
\providecommand{\MR}{\relax\ifhmode\unskip\space\fi MR }
% \MRhref is called by the amsart/book/proc definition of \MR.
\providecommand{\MRhref}[2]{%
  \href{http://www.ams.org/mathscinet-getitem?mr=#1}{#2}
}
\providecommand{\href}[2]{#2}

	%\section{Other stuff}
	%\input{old_stuff_mathieu}
\end{document}